\documentclass[12pt]{article}
\usepackage[colorlinks,citecolor=blue,urlcolor=blue]{hyperref}
\usepackage{amsmath,amssymb,amsfonts,amsthm,graphicx}
\usepackage[usenames,dvipsnames]{color}
 \setlength{\textwidth}{6.5 in}
 \setlength{\textheight}{8.25in}
 \setlength{\oddsidemargin}{0in}
 \setlength{\evensidemargin}{0in}
\newcommand{\C}{{\mathcal C}}

\newcommand{\G}{{\mathcal G}}
\newcommand{\be}{\begin{equation}}
\newcommand{\ee}{\end{equation}} 

\newcommand{\old}[1]{}
\newcommand{\eps}{\varepsilon}

\newcommand{\D}{{\cal D}}

\renewcommand{\P}{{\cal P}}

\newcommand{\R}{{\mathbb R}}
\newcommand{\Z}{{\mathbb Z}}

\newcommand{\E}{{\mathcal E}}
\newcommand{\M}{{\mathcal M}}

\newcommand{\bV}{V_\partial}
\newcommand{\bW}{W_\partial}
\newcommand{\bB}{B_\partial}

\newcommand{\SL}{\text{SL}}

\renewcommand{\H}{\mathbb{H}}
\newcommand{\Tr}{\mathrm{Tr}}

\newcommand{\rf}[1]{\begin{rotatebox}{90}{$#1$}\end{rotatebox}}

\newtheorem{theorem}{Theorem}
\newtheorem{thm}[theorem]{Theorem}

\newtheorem{lemma}[theorem]{Lemma}

\title{
Planar $3$-webs and the boundary measurement matrix}
\author{Richard Kenyon\footnote{Department of Mathematics, Yale University, New Haven; richard.kenyon at yale.edu} \and
 Haolin Shi\footnote{Department of Mathematics, Yale University, New Haven; haolin.shi at yale.edu}}

\begin{document}

\date{}
\maketitle
\abstract{
We compute connection probabilities for reduced $3$-webs in the triple-dimer model 
on circular planar graphs using the boundary measurement matrix (reduced Kasteleyn matrix).
As one application we compute several ``$SL_3$ generalizations'' of the Lindstr\o{}m-Gessel-Viennot theorem,
for ``parallel" webs and for honeycomb webs.
We also apply our results to the scaling limit of the dimer model in a planar domain, giving
conformally invariant expressions for reduced web probabilities.}

\section{Introduction}

The dimer model is the study of the set of dimer covers, or perfect matchings, of a planar graph.
It is a highly successful combinatorial and probabilistic model with connections to the Grassmannian (see e.g. \cite{Post}), conformal field theory (see e.g. \cite{Kenyon.lectures}) and integrable systems \cite{GK}. 

A \emph{double dimer cover} of a graph is obtained by superimposing two dimer covers, to get a collection of loops and doubled edges.
In \cite{KW11} Kenyon and Wilson studied \emph{boundary connection probabilities} for the double dimer model in planar graphs (see an example in Figure \ref{doubledomino}),
\begin{figure}[htbp]
\begin{center}\includegraphics[width=2.5in]{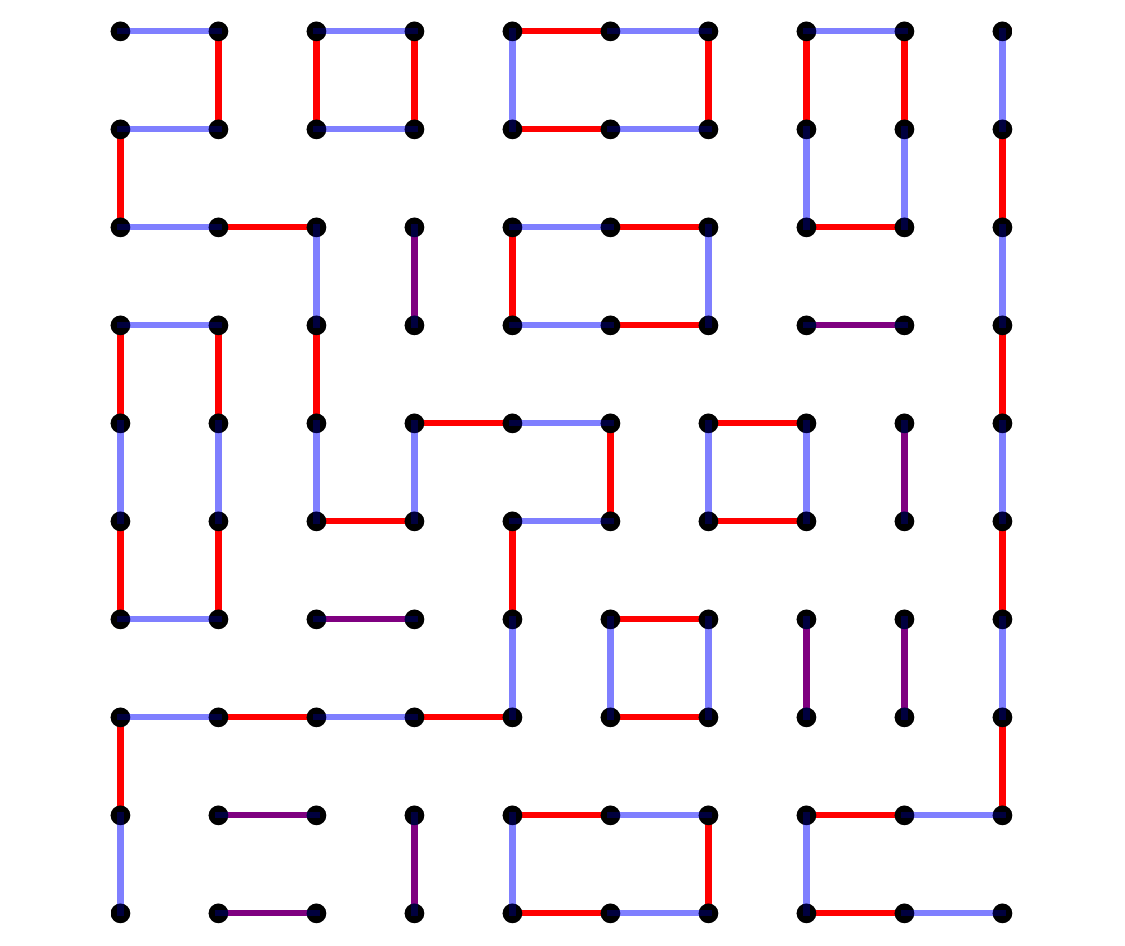}\end{center}
\caption{\label{doubledomino}The union of two independent uniform dimer covers of a rectangle (the first of 
which does not use the four corner vertices)
produces a double dimer configuration containing two chains of dimers connecting the four corners in two possible ways.
The probability of either connection is a function of the boundary measurement matrix \cite{KW11}.}
\end{figure}
showing how they could be computed from the reduced Kasteleyn matrix, or ``boundary measurement matrix".
As application, in the scaling limit they computed connection probabilities for multiple $\text{SLE}_4$ curves in the disk. Similar calculations were recently done in \cite{Peltolaetal}
for the spanning tree model and $\text{SLE}_8$ from a conformal field theory point of view.
In \cite{Kenyon14CMP, Dubedat, BC} it was shown how to use $\SL_2(\R)$-connections on graphs to compute various topological connection probabilities 
for the double dimer model on a graph on a topologically nontrivial surface without boundary. 

Given these results, it is natural to ask about possible $n$-fold dimer analogs, for $n\ge 3$. 
A union
of $n$ dimer covers makes a more complicated structure called a  \emph{graphical $n$-web}, or \emph{$n$-multiweb}, see \cite{DKS}. 
We consider the case $n=3$ here, which has an extra feature not present in the $n\ge4$ case, which is the notion of \emph{reduced} webs.  

Webs are primarily representation-theoretic objects.
For $n=3$, Kuperberg in \cite{Kuperberg} used them to describe invariant functions in tensor products of $\SL_3$-representations.
He also gave a diagrammatic basis for invariant functions consisting of reduced $3$-webs.
In \cite{Pasha1}, Pylyavskyy discussed relations between reduced $3$-webs and the combinatorics of totally positive matrices.
Lam in \cite{Lam} first discusses the connection between 
$3$-webs and the boundary measurement matrix, concentrating on the algebraic and integrable aspects, in particular for 
positroid varieties. 
Fraser, Lam, Le in \cite{FLL} also connect $n$-fold dimers with the boundary measurement matrix, studying in particular the structure
of the space of $n$-webs for $n\ge 4$. In \cite{DKS} the connection was made between $\SL_n$ web traces and the associated Kasteleyn matrix determinant,
for $n$-webs in general planar bipartite graphs.

Even though they have a representation-theoretic underpinning, webs are also natural from a combinatorial point of view: reduced $3$-webs are certain topological types of configurations in the triple-dimer model on a bipartite
surface graph. 
Here we study (reduced) webs \emph{as random objects}. 
We can describe our main result as follows. 
To an edge-weighted planar graph with $n$ specified boundary vertices and specified boundary colors,
we take a random triple-dimer cover. 
The probability that a random reduction of
this triple dimer cover has a specified topological type is then a function of the underlying weights. 
Our main result, Theorem \ref{main}, is an explicit expression for this probability
as a function of the reduced Kasteleyn matrix, or boundary measurement matrix  
(sometimes referred to as \emph{Postnikov's boundary measurement matrix} since Postnikov first made use of it in his study of the 
totally nonnegative Grassmannian \cite{Post}). 
Theorem \ref{main} shows that the probability is a constant times an
integer-coefficient polynomial in the entries of the boundary measurement matrix.

As an illustration of our results, for a planar graph with $3$ white and $3$ black boundary vertices in circular order $w,b,w,b,w,b$,
 the boundary measurent matrix $X$ is a $3\times 3$ matrix
and the probability of the web of Figure \ref{hexwebsmall}
\begin{figure}[htbp]
\begin{center}\includegraphics[width=1.3in]{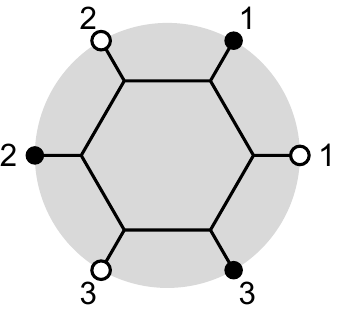}\end{center}
\caption{\label{hexwebsmall}Example of a reduced web with $6$ nodes in order $w,b,w,b,w,b$.}
\end{figure}
 (for ``monochromatic" boundary conditions)
is $Pr = -2\frac{X_{12}X_{23}X_{31}}{\det X}.$
In the scaling limit of the triple dimer model on the upper half plane,
with $6$ marked boundary points as shown in Figure \ref{UHPexample}, and the same monochromatic boundary conditions, 
the probability of this reduced web as a function of the boundary points $z_1<z_2<\dots<z_6\in\R$, is
\be\label{beetle}\text{Pr} = \frac{2(z_2-z_1)(z_3-z_2)(z_4-z_3)(z_5-z_4)(z_6-z_5)(z_6-z_1)}{(z_3-z_1)(z_4-z_2)(z_5-z_3)(z_6-z_4)(z_5-z_1)(z_6-z_2)}.\ee
The five other reduced webs are shown in Figure \ref{Rt61}, and have probabilities with similar expressions, see (\ref{6probs}). See (\ref{PrTn2}) for a case with $6n$ nodes.

\begin{figure}[htbp]
\begin{center}\includegraphics[width=4in]{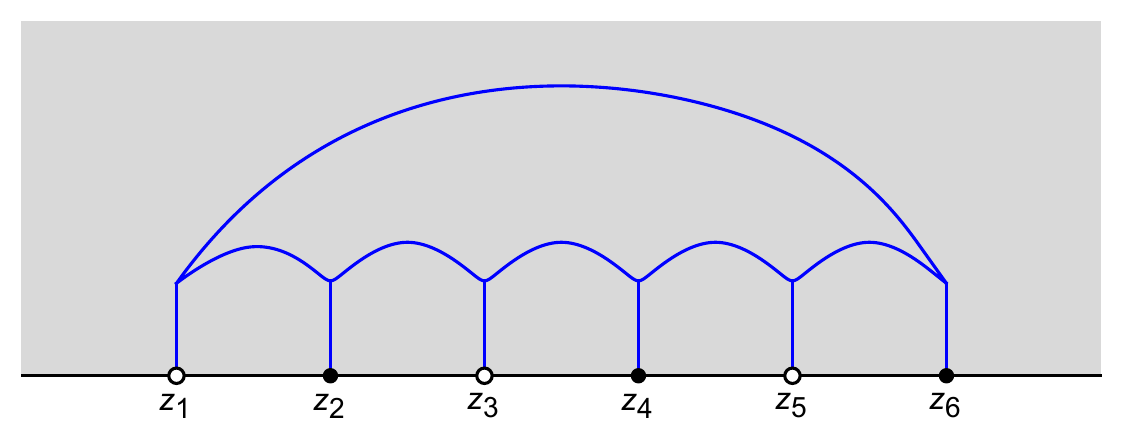}\end{center}
\caption{\label{UHPexample}A reduced web in the upper half plane with $6$ nodes.}
\end{figure}

As application of Theorem \ref{main} we give several infinite families of webs whose probabilities have explicit and compact expressions.
In Section \ref{LGVscn} we give an ``$\text{SL}_3$-version" of the Lindstr\o{}m-Gessel-Viennot theorem,
where we compute the partition function, that is, unnormalized probability, for parallel oriented crossings (see Figure \ref{parallel}) or parallel crossings with ``crossbars" (Figure \ref{crossinggeneral}) as a product of two determinants.
In Section \ref{Tn} we show that the order-$n$ triangular honeycomb web $T_n$ 
of Figure 
\ref{bigweb3} (of which Figure \ref{UHPexample} is the order-$1$ example) has a partition function which is a product of three determinants. The result for another kind of order-$n$ triangular honeycomb (Figure \ref{bigweb4prime}) is given in Section \ref{Tnprime}.

In Section \ref{scalinglimitscn} we discuss the limiting probabilities of reduced webs
for the natural scaling limit of the square grid graph on the upper half plane or other planar domains. 



Here is a short summary of the main result (Theorem \ref{main}) and method of proof. 
Given a circular planar bipartite graph $\G$ define the partition function
$Z:=\sum_{\text{multiwebs $m$}}\Tr_m$. Each $\Tr_m$ can be expanded
in terms of the basis $\{\Tr_\lambda\}_{\lambda\in\Lambda}$ of reduced webs, leading to
$Z = \sum_{\lambda\in\Lambda}C_\lambda \Tr_\lambda,$ for some coefficients $C_\lambda$.
The goal is to compute $C_\lambda$. For each node coloring $c$, we can compute $Z(c)$ as a product of three determinants, 
and expand these using an ``intermediate" spanning set of functions $\{\Tr_\tau\}_{\tau\in\Pi}$ where $\tau$ runs over a simple family of
nonplanar webs. We can then rewrite each $\Tr_\tau$ in terms of the reduced webs $\Tr_\lambda$,
and thus write $\Tr_\tau(c)$ in terms of $\Tr_\lambda(c)$, in such a way that the coefficients $C_\lambda$ do not depend on $c$. Getting the signs right is the hardest part of this theory; the key argument is Lemma \ref{signs}.

\bigskip

\noindent{\bf Acknowledgments.} This research was supported by NSF grant DMS-1940932 and the Simons Foundation grant 327929. 
We thank Daniel Douglas, Pavlo Pylyavskyy and Xin Sun for discussions and insights.

\section{Definitions and background}

\subsection{The graph}\label{graphsection}

Let $\G=(V,E)$ be a circular planar bipartite graph. 
This means $\G$ is a bipartite graph: $V=B\cup W$ with $E\subset B\times W$, 
and $\G$ is embedded in a disk with a specified set of vertices 
$\bV\subset V$ on the bounding circle; $\bV$ contains some but not necessarily all vertices on the outer face of $\G$.  We refer to vertices in $\bV$ as \emph{nodes},
and vertices in $V\setminus\bV$ as \emph{internal vertices}.

We denote by $\bW, \bB$ the white and black nodes,
and by $W_{int},B_{int}$ the white and black internal vertices.
We assume throughout the paper that $|W_{\partial}|\ge |B_{\partial}|$. The case $|W_{\partial}|\le |B_{\partial}|$ is equivalent after exchanging colors. We will also assume 
$$3(|B_{int}|-|W_{int}|)=|W_{\partial}|-|B_{\partial}|,$$
see (\ref{bdybalanced}), below. 

We assume $\G$ is nondegenerate in the following (mild) technical sense. The graph 
$\G$ is required to have two kinds of partial matchings; see Figure \ref{almost} for an illustration.
In the case that $|W_\partial|=|B_\partial|$, we require simply that $\G$ have a partial dimer cover covering exactly the internal vertices, and also that
$\G$ have a full dimer cover covering all vertices. 
More generally, if $|W_\partial|\ge|B_\partial|$, then we require first, that $\G$ have a partial
dimer cover $M$ which covers all internal vertices and no black node 
(and therefore some set of white nodes $W^*$ if $|W_\partial|>|B_\partial|$), see Figure \ref{almost}, left. We let $B_{int}^*\subset B_{int}$ denote the internal black vertices connected in $M$ to $W^*$. Necessarily $|W^*|=|B_{int}^*|=|B_{int}|-|W_{int}|$.
The second required partial dimer cover $M'$ of $\G$ uses all vertices except $W^*\cup W^{**}$ 
where $W^*$ is defined from $M$ as above and $W^{**}$ consists
in, for each $w\in W^*$, one white vertex  
(node or internal) $w'\ne w$ on one of the two external faces adjacent to $w$. (By external face we mean a complementary component of $\G$ in the disk, adjacent to the boundary of the disk.) See Figure \ref{almost}, right. 
The existence of the first kind of partial matching is essential while the existence of the second is probably
not essential but makes our proof easier (as it allows us to reduce the $|\bW|>|\bB|$ case to the $|\bW|=|\bB|$ case....
see Figure \ref{augmented}).
\begin{figure}[htbp]
\begin{center}\includegraphics[width=2.in]{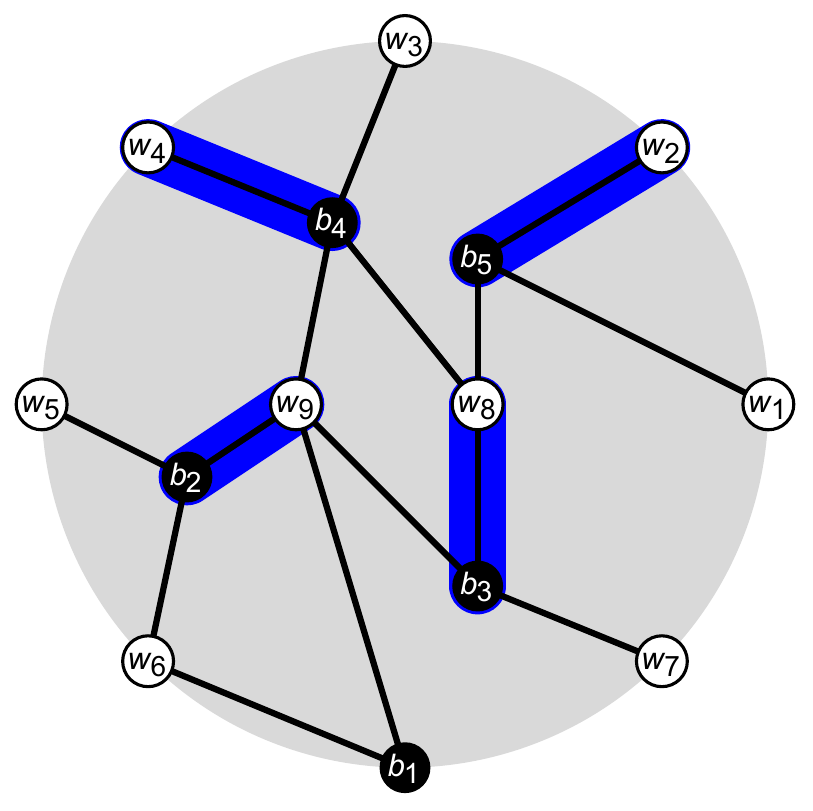}\hskip1cm\includegraphics[width=2.in]{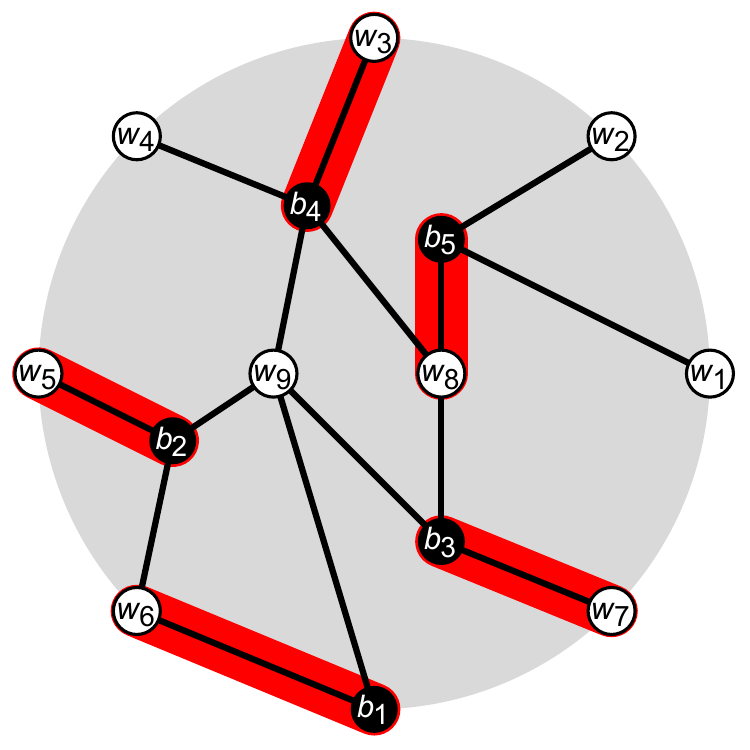}\end{center}
\caption{\label{almost}For nondegeneracy, we need two partial matchings. One, on the left, is a partial matching $M$ using all internal vertices and no black node. 
This has $B_{int}^*=\{b_4,b_5\}$ and $W^*=\{w_2, w_4\}$. 
The second, on the right, is a partial matching $M'$ using all vertices except $W^*=\{w_2, w_4\}$ and $W^{**}=\{w_1,w_9\}$. Note that $w_1$ is on the same external face as $w_2$ and
$w_9$ is on the same external face as $w_4$. }
\end{figure}

We let $\nu:E\to\R_{>0}$ be a positive weight function on the edges of $\G$.

\subsection{Webs, skein relations and reduced web classes}

A \emph{multiweb} in $\G$ is a map $m:E\to\{0,1,2,3\}$ with the property that
for internal vertices $v$, $\sum_{u\sim v} m_{uv}=3$ and for boundary vertices $v$, $\sum_{u\sim v} m_{uv}=1.$
See Figure \ref{blueweb2}. 
In other words a multiweb is a submultigraph with degree $3$ at internal vertices and degree $1$ at each node $v$. 
\begin{figure}[htbp]
\begin{center}\includegraphics[width=2in]{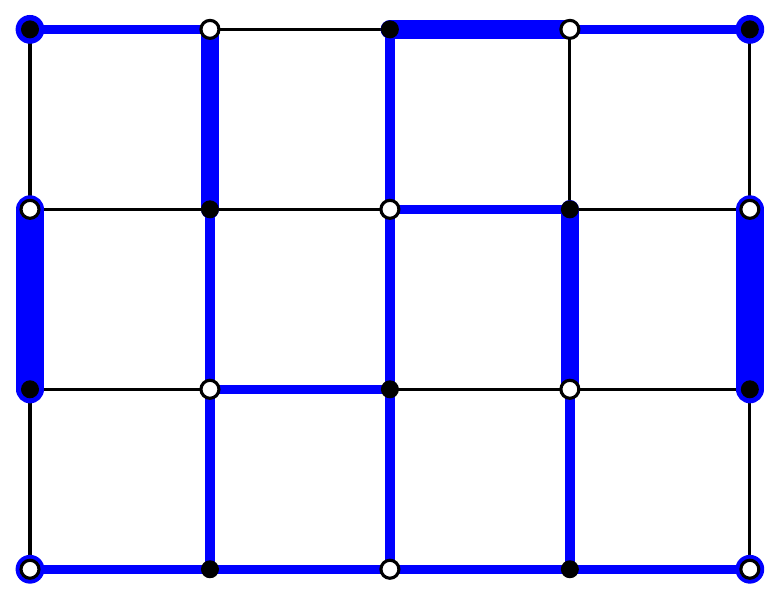}\hskip1cm\includegraphics[width=1.8in]{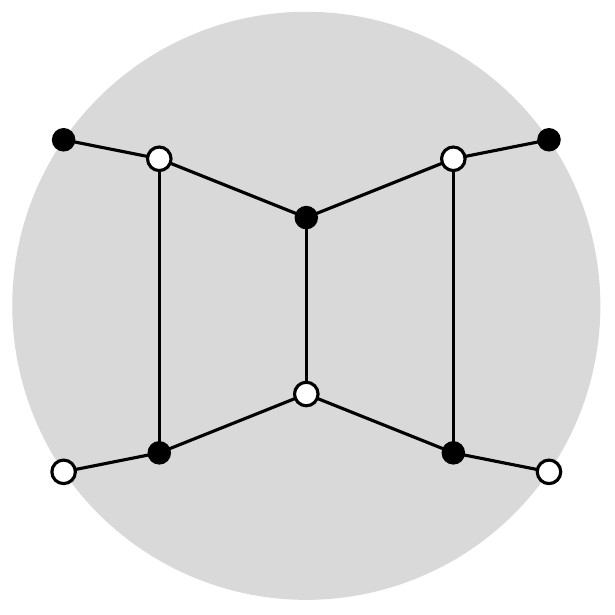}\end{center}
\caption{\label{blueweb2}Left: example of a multiweb (blue) $m$ in a $5\times 4$ grid graph. 
Multiplicities of edges are indicated by their thickness.
Nodes are at the corners (blue dots). Right: the associated abstract web $[m]$.}
\end{figure}
The \emph{weight} $\nu(m)$ of a multiweb $m$ is defined to be the product of its edge weights,
taking into account their multiplicity:
$$\nu(m) = \prod_{e\in E}\nu_e^{m_e}.$$

If we ignore the edges of multiplicity $3$, which are isolated, a multiweb consists of a set of trivalent vertices 
(vertices with three distinct edges of multiplicity $1$)
and the boundary vertices, connected via ``$12$-paths", that is, paths of edges which are alternately of multiplicity $1$ and $2$, beginning and ending with an edge of multiplicity $1$ (these paths may consist in a single edge of multiplicity $1$). 
There may be also some closed loops of $12$-paths.  
A multiweb $m$ has an associated \emph{web} $[m]$, which is an abstract unweighted 
graph with vertices in bijection with the union of the boundary vertices
and trivalent vertices of $m$, and two vertices are connected in $[m]$ if they are connected in $m$ with a $12$-path. In particular the multiplicity-$3$ edges of $m$ are ignored in $[m]$. The abstract web
$[m]$ also has disjoint loops (with no vertices) for each closed $12$-path in $m$.
Note that $[m]$ is a circular planar bipartite graph (possibly with loops) with boundary $\bV$. See Figure \ref{blueweb2}.

A web $[m]$ is \emph{reduced} if it has no faces of degree $0,2$ or $4$.
We say a multiweb $m$ is reduced if its associated web $[m]$ is reduced. Note that in a reduced web, every component contains at least one node:
a component without a node would be a
trivalent bipartite planar graph with no faces of degree $2$ or $4$. A simple Euler 
characteristic argument shows that such graphs do not exist. 
Thus a multiweb is reduced if the connected components of its complement
are bounded by paths of edges which contain either a node or at least $6$ trivalent vertices.

To a multiweb $m$ is associated a function, the \emph{trace} $\Tr_m$, defined in Section \ref{tracedef} below.
It is a multilinear function of a set of vectors in $\R^3$ assigned to the nodes, one to each node.
The trace only depends on the underlying abstract web:  $\Tr_m = \Tr_{[m]}$. 
As shown in \cite{Kuperberg}\footnote{We use the sign convention of Sikora \cite{Sikora} which differs from that of Kuperberg.}, any planar web can be replaced by a formal linear combination of reduced webs by applying a sequence of 
\emph{skein relations}, see Figure \ref{skeinrels}. These skein relations preserve the trace, in the sense that 
the trace of a web equals the sum of the traces of the webs in its reduction. 
\begin{figure}[htbp]
\begin{center}\includegraphics[width=1in]{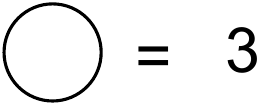}\\\vskip.3cm\includegraphics[width=3in]{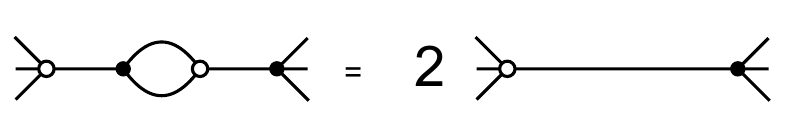}\\\vskip.5cm\includegraphics[width=3in]{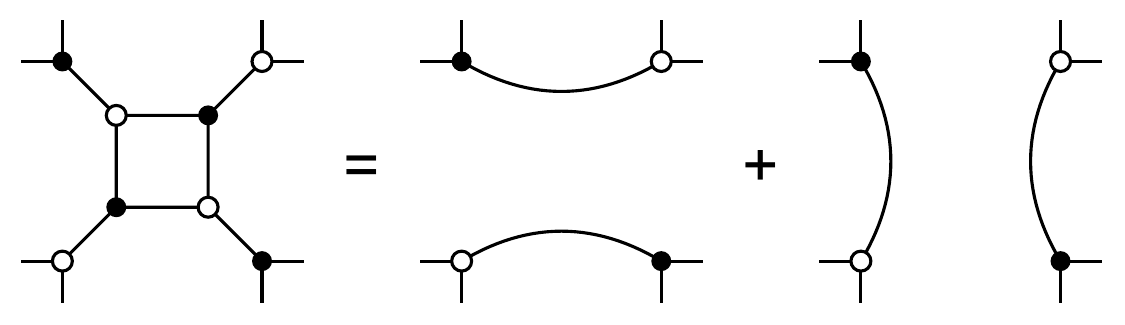}\\\end{center}
\caption{\label{skeinrels}Planar skein reductions for webs. First, any closed loop can be removed, multiplying the coefficient of the remaining web by $3$. Any bigon can be removed as shown, doubling the coefficient. Thirdly, any square face can be replaced by two parallel paths in two ways, as shown.  }
\end{figure}

\begin{figure}[htbp]
\begin{center}\includegraphics[width=3in]{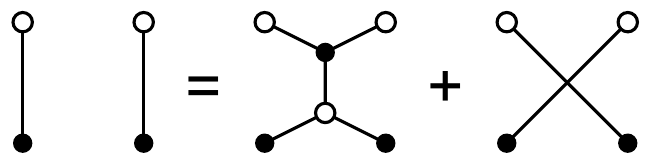}\end{center}
\caption{\label{skeinrel1}The basic skein relation, with Sikora sign convention.}
\end{figure}

The second and third skein relations follow from the more basic relation of Figure \ref{skeinrel1}, which involves
webs with edge crossings. We can use the basic skein relation to replace any nonplanar web (that is, web drawn on the plane with edge crossings) by a formal linear combination of planar webs, by resolving each crossing locally into the other two locally planar pieces. It is worth noting
that if two strands of a web cross more than once consecutively, applying skein relations to 
both these crossings reduces the web to the same web but with both crossings removed,
as in the second Reidemeister move. Likewise if three strands mutually cross then
applying a third Reidemeister move results in an equivalent web (equivalent under skein relations).
Finally, if a strand crosses itself then it is equivalent to the uncrossed version as in the first
Reidemeister move. These observations show that when dealing with webs having edge crossings we are free to isotope the strands across other strands and/or crossings as desired.

The skein relations have analogs for multiwebs as well, see for example Figure \ref{mwskein2}. For a bigon or quad face,
these relations involve changing the multiplicity 
of edges on the face by $\pm1$ alternatively around the face. That is, if multiplicities are $m_1,m_2,\dots,m_{2k}$ then replace these with multiplicities
$m_1+1,m_2-1,\dots,m_{2k}-1$ to get one multiweb and by $m_1-1,m_2+1,\dots,m_{2k}+1$ to get the other multiweb. Both resulting multiwebs have one fewer face.
\begin{figure}[htbp]
\begin{center}\includegraphics[width=4in]{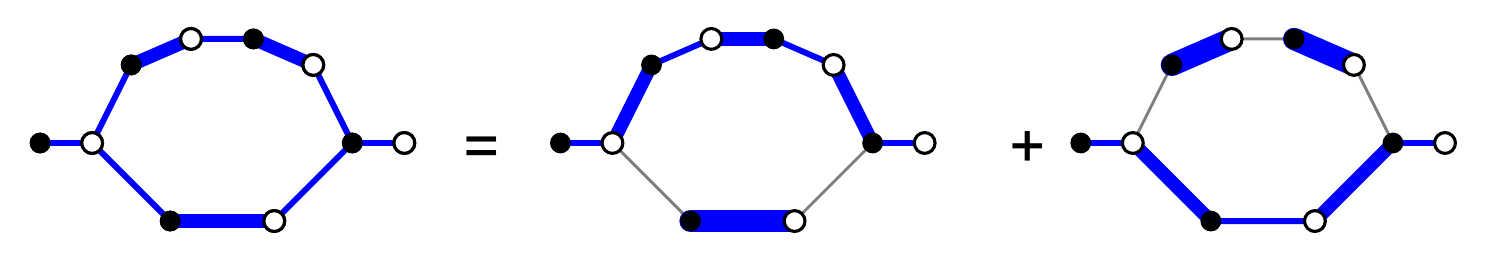}\end{center}
\caption{\label{mwskein2}Example of the second skein relation realized as a multiweb skein relation. Note that it replaces
a multiweb having a bigon face with two different (but equivalent) multiwebs without the bigon face.}
\end{figure}

A \emph{reduced web class} is an equivalence class of reduced multiwebs, two reduced multiwebs being equivalent if
their abstract (reduced) webs are isomorphic with an isomorphism preserving the nodes pointwise. 

Let $\Lambda$ be the set of reduced web classes arising in $\G$.
See Figures \ref{Rt41}, \ref{Rt42}, \ref{Rt51}, \ref{Rt61}, \ref{Rt62}, \ref{Rt63} and \ref{Rt64} for examples.
Given a multiweb $m$ in $\G$, upon applying skein relations we can write
$m$ as a formal integer linear combination of reduced web classes: 
\be\label{mred}m=\sum_{\lambda\in\Lambda} C_{\lambda}(m)\lambda
\ee
where the $C_\lambda(m)$ are nonnegative integers.
See Figure \ref{skeinex} for an example.
\begin{figure}[htbp]
\begin{center}\includegraphics[width=3in]{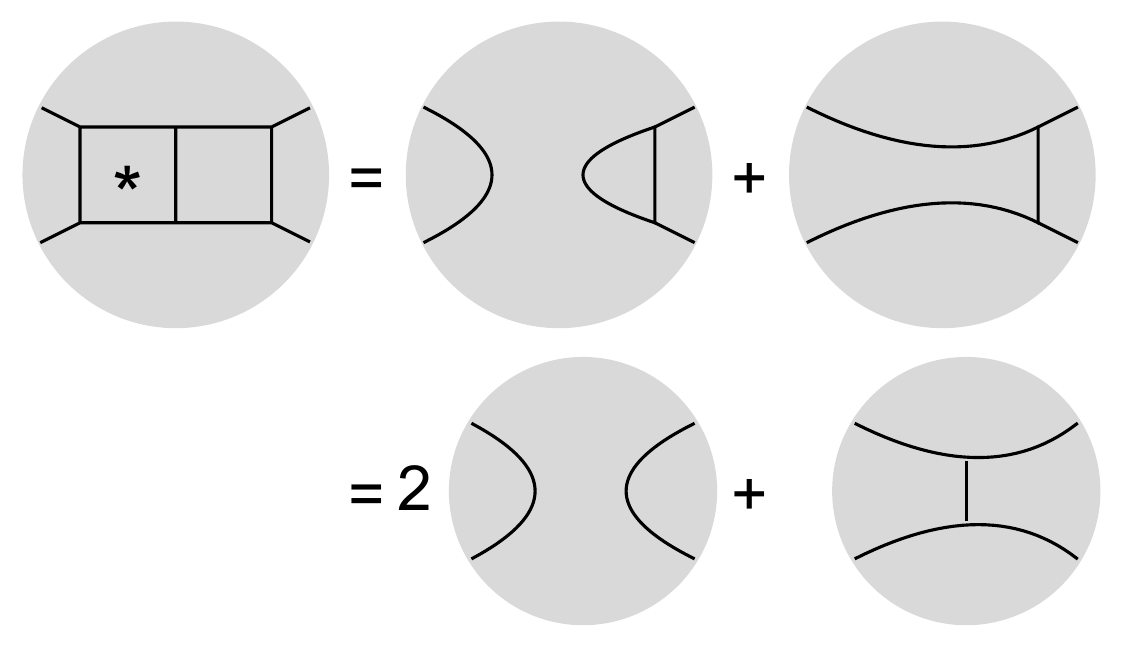}\end{center}
\caption{\label{skeinex}Example of reducing a web with four nodes. We first apply a skein relation to the left quadrilateral face, 
then apply the skein relation to the resulting bigonal face. }
\end{figure}

\begin{figure}[htbp]
\begin{center}\includegraphics[width=4in]{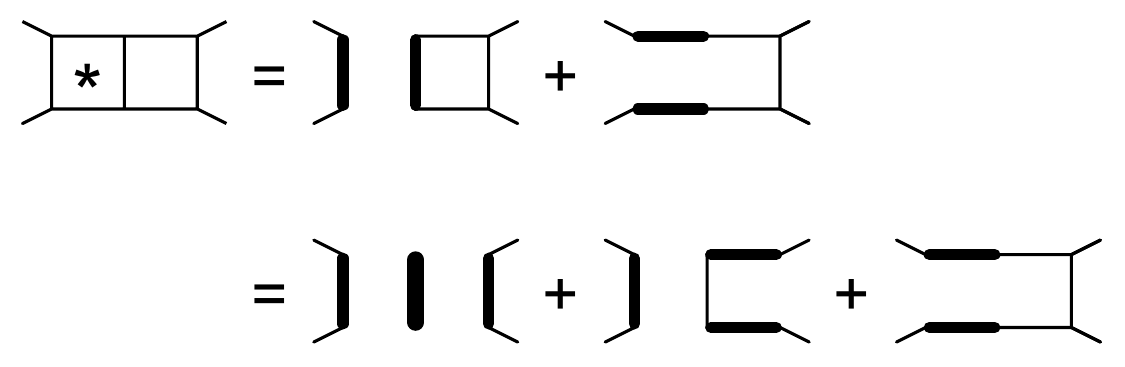}\end{center}
\caption{\label{mwskeinex}The same reduction as in Figure \protect{\ref{skeinex}} using multiweb skein relations. Of the three resulting reduced multiwebs, the first two have the same web class.}
\end{figure}

\old{
We remind the reader that a multiweb $m$ has a \emph{weight}, which is the product (with multiplicity) of its edge weights,
which are positive real numbers.
A multiweb also has a \emph{trace}, which is a positive integer (for given boundary colors), and is the trace of the associated web $[m]$. Skein relations act on the trace but don't change the weight of a web. 
}

\subsection{Node types}

To not confuse the notions of color of a vertex (black or white) with the color of an edge, we refer to the blackness or whiteness of a vertex 
as its \emph{type}.

Fix $\G$ with $n$ nodes $\bV=\{v_1,\dots,v_{n}\}$ in counterclockwise order. We refer to the $n$-tuple $p\in\{b,w\}^n$ of types as the 
\emph{type vector} of $\G$. 
Let $\Omega$ be the set of multiwebs in $\G$, with multiplicities $1$ on $\bV$.
For $\Omega$ to be nonempty there is a linear constraint on the number of black and white internal vertices and
black and white nodes: we must have
\be\label{bdybalanced}3(|B_{int}|-|W_{int}|)=|W_{\partial}|-|B_{\partial}|.\ee
This follows by bipartiteness: $3|W_{int}|+|W_{\partial}|$ is the total white degree, which must equal
the total black degree, since each edge has one black and one white vertex. The quantity $|B_{int}|-|W_{int}|$ is sometimes called the \emph{excedence} in dimer literature.

From (\ref{bdybalanced}), $|\bW|-|\bB|$ must be a multiple of $3$. For example, for
webs with $2,3,4$ or $5$ nodes the possible total types $(|\bW|, |\bB|)$ must be respectively
$(1,1),(3,0),(2,2),(4,1)$, and for $6$ nodes there are two possible total types, $(3,3)$ or $(6,0)$
(recall that we make the standing assumption that $|\bW|\ge |\bB|$).

The maximal number of reduced web classes for $\G$ with $(|\bW|, |\bB|)$ is discussed in Section \ref{3pttnscn} below.

\subsection{Colored multiwebs}

Let $\C=\{1,2,3\}$ be the fixed set of colors. An \emph{edge coloring} of a web $m$ is an assignment of a color in $\C$ to each edge so
that at each trivalent vertex all three colors are present. The definition of edge coloring is slightly different for multiwebs, to take into account the multiplicity of edges. 
A \emph{colored multiweb} is a multiweb in which each edge of multiplicity $k$ has an associated subset $S_e\subset\C$ of size $k$ ($S_e$ 
is the \emph{set of colors of edge $e$}), with the property that at each internal vertex $v$ all colors are present: $\cup_{u\sim v} S_{uv} = \C$. At a node (of degree $1$) only one color will be present. See an example in Figure \ref{webcoloring}.
\begin{figure}[htbp]
\begin{center}\includegraphics[width=1.in]{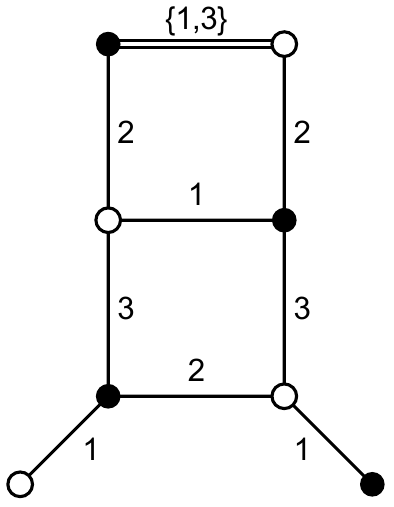}\end{center}
\caption{\label{webcoloring}Coloring of a multiweb, where the lower two vertices are the nodes.}
\end{figure}

Given a choice $c:V_\partial\to\C$ of colors at the nodes, define $\Omega_c$ to be the set of colored multiwebs in $\G$
in which the unique edge at node $v$ has color $c(v)$.  We say $c$ is \emph{feasible} if $\Omega_c$ is nonempty;
$\Omega_c$ is nonempty only when $c$ satisfies certain constraints. 
The set of edges of each color in a colored multiweb is a dimer cover of the graph $\G_i$ obtained from $\G$ by removing
nodes not in $V_i$. So $\Omega_c$ is nonempty if and only if each
$\G_i$ has a dimer cover. 

In particular the number of white nodes of color $i$, plus the number of white internal vertices, must equal the number of black nodes of color $i$ plus the number of black internal vertices:
\be\label{balanced}|W_i|- |B_i|=|B_{int}|-|W_{int}|.\ee
These equations (\ref{balanced}) give a necessary, but not generally sufficient, criterion for $\Omega_c$ to be nonempty.

\subsection{Traces}\label{tracedef}

Let $Y$ be a $3$-dimensional $\R$-vector space, $Y\cong\R^3$, and $Y^*=\text{Hom}(Y,\R)$ its dual. 
Associated to a web $[m]$ with $(|\bW|,|\bB|)=(k,n-k)$ is an $\SL_3(\R)$-invariant multilinear function, its \emph{trace},
$\Tr_{[m]}:Y^{\otimes k}\otimes (Y^*)^{\otimes n-k}\to\R$.
That is, $\Tr_{[m]}$ is a real valued, multilinear function of $k$ vectors in $Y$ and $n-k$ covectors in $Y^*$,
which is invariant under the natural diagonal action of $\SL_3$. For the tensorial definition of $\Tr_{[m]}$ see \cite{Kuperberg} 
or \cite{DKS}. We give here a concrete combinatorial definition, which is equivalent (see \cite{DKS}) 
to the tensorial one, although it has
the disadvantage that the $\SL_3$-invariance is not obvious.

Let $e_1,e_2,e_3\in Y$ be a basis in $Y$ and $e_1^*,e_2^*,e_3^*\in Y^*$ a dual basis.
We associate color $i\in\C$ to $e_i$ and $e_i^*$.
By multilinearity, it suffices to define $\Tr_{[m]}$ on $k$-tuples of basis vectors in $Y$ and $(n-k)$-tuples of basis covectors in $Y^*$.

By definition, the result of applying $\Tr_{[m]}$ to $\vec e = (e_{i_1},\dots,e_{i_{k}},e_{j_1}^*,\dots,e_{j_{n-k}}^*)$
is an integer equal to a signed number of edge colorings of $[m]$ in which the $\ell$th white node has its unique edge of color $i_\ell$
and the $\ell$th black node has its unique edge of color $j_{\ell}$. 
The signs are defined as follows.
Given an edge coloring $\eta$ of $[m]$, 
at each interior vertex the colors are either in
counterclockwise order or clockwise order. At each interior black vertex define the local sign to be $1$ if the order is counterclockwise and $-1$ if clockwise; reverse these conventions at interior white vertices. 
The sign of a coloring is by definition the product over all interior vertices of these local signs, and $\Tr_{[m]}(\vec e)$ 
is the signed number of colored webs whose underlying web is
$[m]$. 

See Figure \ref{detweb} for the most basic examples. The ``tripod" web $[m]$ shown with $(|\bW|,|\bB|)=(3,0)$ has $\Tr_{[m]}(u,v,w) = \det(u,v,w)$,
the determinant of the matrix with columns $u,v,w$. Note that the determinant is multilinear and $\SL_3$-invariant:
$\det(u,v,w)=\det(Au,Av,Aw)$ for any $A\in \SL_3$. Moreover this web has exactly $6$ edge colorings, $3$ of each sign,
which correspond to the six terms
in the expansion (writing $u,v,w$ in the basis $e_1,e_2,e_3$):
$$\det(u,v,w) = u_1v_2w_3 - u_1v_3w_2 + u_2v_3w_1-u_2v_1w_3 + u_3v_1w_2-u_3v_2w_1.$$
The ``line" web $[m]$ shown with $(|\bW|,|\bB|)=(1,1)$ has $\Tr_{[m]}(u,v^*)=v^*(u)$. This is a multilinear function of $u$ and $v^*$,
and also $\SL_3$ invariant (since we \emph{define} the action of $\SL_3$ on $Y^*$ in such a way as to preserve the pairing).
This web has exactly $3$ edge colorings, which correspond to the three terms
in the expansion
$$v^*(u) = v_1u_1+v_2u_2+v_3u_3.$$

\begin{figure}[htbp]
\begin{center}\includegraphics[width=1in]{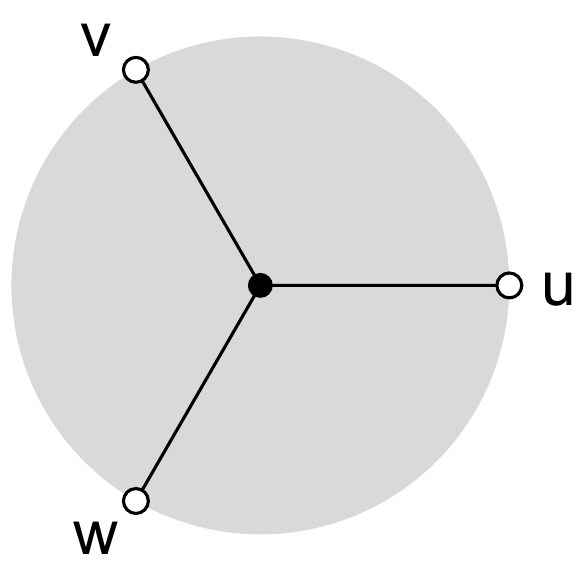}\hskip1in\includegraphics[width=1in]{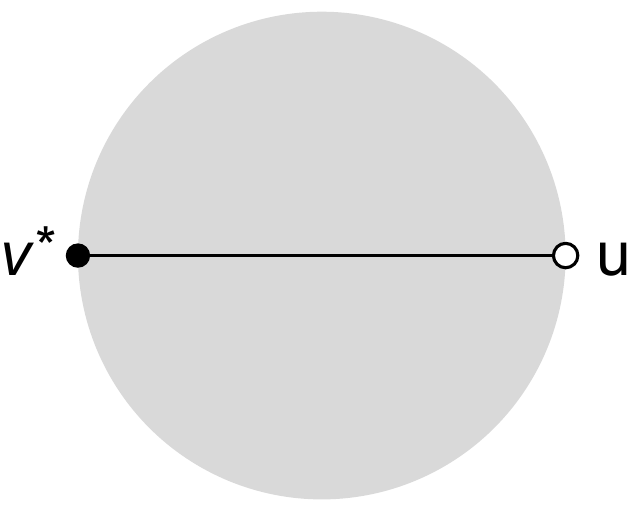}\end{center}
\caption{\label{detweb}Left: the ``tripod" web $[m]$ shown has $\Tr_{[m]}(u,v,w)$ equal to the determinant $u\wedge v\wedge w$. 
Right: the ``line" web $[m]$
has $\Tr_{[m]}(u,v^*) = v^*(u)$.}
\end{figure}

The trace for multiwebs is defined in the same way as the trace for webs, where we only take into
account the trivalent internal vertices when computing the sign. We have $\Tr_{m}=\Tr_{[m]}$.  

In \cite{DKS} it was shown that for a planar graph \emph{without} boundary, all colorings of a web or multiweb have positive sign, so the signed number of colorings equals the number of colorings. A similar result holds for webs with fixed boundary colors.

\begin{lemma}\label{samesign} For a circular planar graph with choice of node colors $c$, 
all colorings of all multiwebs have the same sign $\eps_c$.
\end{lemma}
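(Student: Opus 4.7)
The plan is to prove the lemma by connecting any two colored multiwebs with the same boundary colors $c$ via a sequence of local moves that each preserve the sign. A coloring of a multiweb $m$ decomposes into three dimer covers $(D_1, D_2, D_3)$, where $D_i$ is a dimer cover of the subgraph $\G_i \subset \G$ obtained by removing boundary nodes not of color $i$. Given two colorings with the same $c$, each symmetric difference $D_i \triangle D_i'$ is a disjoint union of even cycles in $\G_i$, so the two colorings can be related by a sequence of elementary cycle-swap moves: each move selects a cycle $\gamma$ of alternating $i$- and $j$-colored edges in some $D_i \cup D_j$ and swaps the $i$ and $j$ colors along $\gamma$. (Moves that change the underlying multiweb $m$ arise as special cases, via cycles that pass through mult-$2$ edges.)

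The main task is then to show that each cycle-swap preserves $\varepsilon$. Under such a swap, the local sign at a trivalent vertex $v$ of $[m]$ flips if and only if $v$ lies on $\gamma$, because the transposition of $i$ and $j$ on the two incident cycle-edges reverses the cyclic color order at $v$ (the $k$-colored edge is untouched). At non-trivalent vertices of $m$ on $\gamma$ (those where a mult-$2$ edge carries one of the colors $i,j$) there is no contribution to $\varepsilon$ to begin with, and trivalent vertices of $[m]$ off $\gamma$ are unchanged. Hence the sign change equals $(-1)^N$, where $N$ is the number of trivalent vertices of $[m]$ lying on $\gamma$. I would then prove that $N$ is always even by exploiting planarity and bipartiteness: the cycle $\gamma$ bounds a disk in the embedding of $\G$, and tracking how trivalent versus mult-$2$ transitions alternate along $\gamma$ together with the face-length parities (all faces have even size) produces the desired parity constraint.

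A probably cleaner alternative is to identify $\varepsilon(\eta)$, up to a global sign depending only on $c$, with a product of Kasteleyn signs $\kappa_1(D_1)\kappa_2(D_2)\kappa_3(D_3)$, where each $\kappa_i$ is a Kasteleyn sign function on $\G_i$. Since $\G_i$ is a circular planar bipartite graph, all its dimer covers contribute the same sign to the Kasteleyn determinant, so this product is a function of $c$ alone. The main obstacle in either route is the identification of the trivalent-vertex sign rule with a product indexed by \emph{edges}: one must match the CCW/CW convention at each trivalent vertex of $[m]$ against the edge-local factors coming from three Kasteleyn orientations on $\G_1,\G_2,\G_3$, and verify that a compatible choice of orientations converts the trivalent-vertex signs into an edge-wise product. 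Once this matching is established, the lemma follows immediately from the standard sign invariance for dimer covers of circular planar bipartite graphs.
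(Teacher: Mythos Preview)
Your first approach has a genuine gap: the color--swap moves you describe do \emph{not} change the underlying multiweb, contrary to your parenthetical claim. A cycle $\gamma$ in $D_i\cup D_j$ (viewed as a $2$-regular multigraph on the internal vertices) either is a doubled edge $e\in D_i\cap D_j$, in which case the swap is trivial, or consists entirely of edges in $D_i\triangle D_j$. In the latter case each edge $e$ on $\gamma$ lies in exactly one of $D_i,D_j$; after the swap it lies in the other, while its membership in $D_k$ is untouched. Hence $m_e=|\{\,\ell:e\in D_\ell\,\}|$ is unchanged for every edge, and the multiweb $m$ is preserved. In particular, passing through a multiplicity-$2$ edge with colors $\{i,k\}$ just turns its color set into $\{j,k\}$; it does not change its multiplicity. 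Thus your moves only connect colorings of a \emph{fixed} multiweb, and you never address how to compare signs across different multiwebs. To do that you would need single-color moves (shift $D_i$ along a cycle of $D_i\triangle D_i'$ while leaving $D_j,D_k$ fixed), and for those the local sign change at a vertex is no longer a simple transposition of two colors: the $i$-edge moves to a new position relative to the unchanged $j$- and $k$-edges, and the parity count is genuinely harder. Your sketch for why $N$ is even (``face-length parities'') is also where the real planar content lives, and it is not fleshed out.

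Your second approach is closer in spirit to what actually works, but you correctly flag the obstacle and do not overcome it. The paper bypasses all of this with a one-line reduction: extend $\G$ to a larger planar bipartite graph $\G'$ with no boundary by adding edges outside the disk so that every colored multiweb $m$ in $\G$ with node colors $c$ extends (in a fixed way outside the disk) to a colored multiweb $m'$ of $\G'$. By the boundary-free result of \cite{DKS} every such $m'$ has positive sign; since the sign factors as (sign of $m$)$\times$(sign of the fixed exterior piece), the sign of $m$ is forced to be constant. This avoids both the connectivity question and the vertex-by-vertex sign bookkeeping entirely.
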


\begin{proof} We embed $\G$ in a larger planar graph $\G'$ by adding edges outside the bounding disk, in such a way that any colored multiweb $m$ in $\G$ with node colors $c$ extends to a 
colored multiweb $m'$ without boundary in $\G'$. Fix the part of such a 
multiweb $m'$ outside of $\G$.
Then the sign of $m'$, which is positive by \cite{DKS}, 
is the product of the sign of $m$ and the sign of the part of $m'$
outside of $\G$, which is fixed. 
\end{proof} 

One method of computing the sign $\eps_c$ directly is below in Section \ref{3pttnscn}. 

For a reduced multiweb $m$, the trace only depends on the web class $[m]\in\Lambda$.
Skein relations preserve the trace, in the sense that the trace of a web or multiweb
equals the sum of the traces of the webs resulting from it in applying a skein relation.
Applying the trace to both sides of (\ref{mred}) thus gives
\be\label{tracemred}\Tr_{m}=\sum_{\lambda\in\Lambda} C_{\lambda}(m)\Tr_{\lambda}.
\ee
Kuperberg showed in \cite{Kuperberg} that the $\{\Tr_{\lambda}\}_{\lambda\in\Lambda}$ 
are linearly independent on $(Y^{\otimes n_1}\otimes (Y^*)^{\otimes n_2})^{\SL_3}$, that is on the space of $\SL_3$-invariant functions on $Y^{\otimes n_1}\otimes (Y^*)^{\otimes n_2}$.
Thus the coefficients $C_{\lambda}(m)$ in (\ref{tracemred}) are well defined functions of $m$.

\subsection{$3$-partitions}\label{3pttnscn}

Let $\G=(V,E)$ with $|V_\partial|=n$, with $V_\partial = \bW\cup \bB$ as above, $|\bW|\ge|\bB|$, and let $\G$ have type vector $p$. 
Let $\Pi$ be the set of unordered, not-necessarily-planar partitions of the nodes into black/white pairs and white triples.
Let us refer to such partitions as ``$3$-partitions". 
For a $3$-partition, the number of white triples of nodes is exactly $N:=\frac13(W_{\partial}-B_{\partial})=B_{int}-W_{int}$ 
(by (\ref{bdybalanced})).

Letting $k=|\bW|$, the number of elements of $\Pi$ is $\frac{k!}{N!6^N}$. This is because there are $\frac{k!}{(2k-n)!}$ injections from $\bB$ (of size $n-k$) to $W_{\partial}$ (of size $k$),
and then $\frac{(3N)!}{N!(3!)^N}$ ways of partitioning the remaining $3N$ white nodes into triples (and $3N=2k-n$). 

The number of reduced web classes was shown in \cite{PPR} to not depend on $p$, only on
$(|\bW|,|\bB|)$, and to be the Kostka number $K_{3^{m},2^k1^{n-k}}$ where $k=|\bW|$ and $m=(n+k)/3$. 
The Kostka number $K_{\mu,\nu}$ is the number of semistandard Young tableaux (SSYT) of shape $\mu$ and weight $\nu$. In this case, it is the number of ways to fill a $3\times m$ table ($3$ columns, $m$ rows) with numbers $1,1,2,2,\dots,k,k,k+1,k+2,\dots,n$ so that numbers increase along columns and do not decrease along rows.
For example when $(|\bW|,|B_{\partial}|)=(4,1)$, as in Figure \ref{Rt51}, we have $n=5, k=4$ and $m=3$.
The set of SSYT is

\vskip7pt
\centerline{
\begin{tabular}{|r@{$\,\,\,$}|c@{$\,\,\,$}|c@{$\,\,\,$}|c@{$\,\,\,$}}
        \hline
4&4&5 \\\hline
2&3&3\\\hline
1&1&2\\\hline
\end{tabular}\hskip1cm
\begin{tabular}{|r@{$\,\,\,$}|c@{$\,\,\,$}|c@{$\,\,\,$}|c@{$\,\,\,$}}
        \hline
3&4&5 \\\hline
2&3&4\\\hline
1&1&2\\\hline
\end{tabular}\hskip1cm
\begin{tabular}{|r@{$\,\,\,$}|c@{$\,\,\,$}|c@{$\,\,\,$}|c@{$\,\,\,$}}
        \hline
3&4&5 \\\hline
2&2&4\\\hline
1&1&3\\\hline
\end{tabular}}
\vskip7pt
\noindent and the Kosta number is $K_{3^3, 2^41^1} = 3$.

We can associate to a $3$-partition a crossing diagram in the disk, by drawing a chord connecting pairs of the $3$-partition and, for every triple $t=w_1w_2w_3$, drawing a ``tripod":
putting an interior black vertex $b=b_t$ in the disk and connecting it to each of $w_1,w_2,w_3$ with a line segment.

The trace of a $3$-partition $\tau$ 
is defined using the above drawing as for the trace of a web; the nonplanarity plays no role in the definition of trace:
if $\tau\in\Pi$ and $c$ is a compatible coloring of the nodes, then $\Tr_\tau(c)=(-1)^\ell$ where $\ell$ is the number of 
triples of $\tau$ whose colors are in clockwise order. In particular if there are no triples, $\Tr_\tau(c)=1$. 

One way to compute the sign of $\eps_c$ from Lemma \ref{samesign} is as follows. Find a $3$-partition $\tau$ of the nodes of $\G$. Draw $\tau$ in the disk (with possibly crossing edges) by joining each triple of $\tau$ 
with a black vertex $b$ in the disk, and drawing each pair of $\tau$ as a path connecting its endpoints.
Each path in $\tau$ inherits a color from its boundary vertex/vertices. 
Let $T$ be the number of triples with colors in clockwise order and
let $M$ be the number of nonmonochromatic crossings (NMCs): crossings of edges of different colors.  
Note that while the number of NMCs depends on the choice of drawing, its parity is an isotopy invariant.
Then $\eps_c=(-1)^{M+T}$. This follows because, using the basic skein relation (see Figure \ref{skeinrel1}) to make each crossing planar, 
each NMC resolves into a double-Y with one vertex of each sign, and each monochromatic crossing 
resolves into a pair of parallel edges (with no sign change). We have constructed a colored planar web with sign $\eps_c$,
and thus by Lemma \ref{samesign} all planar webs with boundary colors $c$ have this same sign $\eps_c$.

\subsection{Kasteleyn matrix and boundary measurement matrix}
\label{Kastsection}

Associated to $\G$ and $\nu$ is a \emph{Kasteleyn matrix} $K$, a matrix with rows indexing the white vertices and columns indexing the black vertices. This is a signed, weighted adjacency matrix: $K_{wb}=0$ if $w$ is not adjacent to $b$ and, if they are adjacent, $K_{wb} = \eps_{wb}\nu(wb)$
where signs $\eps_{wb}\in\{1,-1\}$ are chosen so that a bounded face of length $\ell$ has $\frac{\ell}2+1\bmod 2$ minus signs.  See e.g. \cite{Kast, Kenyon.lectures}. 
Different choices of such signs give to matrices related by ``gauge transformation", that is, by left- and right-multiplication by diagonal matrices of diagonal entries $\{\pm1\}$.

We define $X$, the \emph{reduced Kasteleyn matrix}, or \emph{boundary measurement matrix}, 
to be a maximal Schur reduction of $K$ to the nodes,
in the following sense.
Take a partial matching $M$ which uses all internal vertices and no black node. Such a matching exists by nondegeneracy, see Figure \ref{almost}, left.
Let $B_{int}^*\subset B_{int}$ denote the internal black vertices connected in $M$ to a white node.
Let $D$ be the matrix $K$ restricted to the vertices $W_{int}$ and $B_{int}\setminus B_{int}^*$: this is the set of vertices of $M$ when we remove from $M$ the dimers connected to white nodes.
Then $D$ is a Kasteleyn matrix of its induced subgraph: 
each bounded face of the subgraph is a face of $\G$.
By ordering vertices of $\G$ so that $D$'s vertices are last, we can write 
$K=\begin{pmatrix} A&B\\C&D\end{pmatrix}$ in block form.
Then define 
\be\label{Xdef}X:=A-BD^{-1}C.\ee
Let $\Delta=\det D$; this is nonzero since $D$ is a Kasteleyn matrix of a subgraph of $\G$ which
has at least one dimer cover. 

Up to sign each entry $X_{wb}$ has the following combinatorial interpretation \cite{KW11}:
$X_{wb}= \pm Z(\G_{wb})/\Delta$ where $Z(\G_{wb})$ is the weighted sum of dimer covers of the graph $\G_{wb}$, the graph obtained
from $\G$ by removing all nodes and all of $B_{int^*}$ except $w$ and $b$. 
To see this, note that
removing rows and columns of $K$ for all nodes and vertices $B_{int}^*$ not equal to $w$ or $b$ results in a matrix 
$\begin{pmatrix}A_{wb}&B_w\\C_b&D\end{pmatrix}$ whose Schur reduction is 
the $1\times1$ matrix $X_{wb}$, and whose determinant is $Z(\G_{wb})$.

Note that $X$ depends nontrivially on
our choice of $B_{int}^*$ (corresponding to submatrix $D$).

If $|\bW|=|\bB|$ then $K$ and $X$ are square matrices, and 
\be\label{detX}\det K = \det D\det X=\Delta\det X;\ee 
this follows from applying the determinant to both sides of
$$\begin{pmatrix}A&B\\C&D\end{pmatrix} = \begin{pmatrix}A-BD^{-1}C&B\\0&D\end{pmatrix}\begin{pmatrix}I&0\\D^{-1}C&I\end{pmatrix}.$$
Note that $\det K\ne 0$ by nondegeneracy.

\subsection{Signs}

If $|\bW|=|\bB|=k$, then $K$ is a square $k\times k$ matrix. 
Suppose $W_1\cup W_2\cup W_3$ is a partition of the white nodes
and $B_1\cup B_2\cup B_3$ is a partition of the black nodes, with $|W_i|=|B_i|$ for each $i$.
Let $c$ denote this partition; we think of $c$ as a coloring of the nodes with colors $\C=\{1,2,3\}$.
Let $K_i =K_{W_i}^{B_i}$ be the submatrix of $K$
obtained by discarding nodes not in $W_i\cup B_i$, that is, keeping only interior vertices and nodes of color $i$.
Likewise define $X_i = X_{W_i}^{B_i}$, its Schur reduction using the same submatrix $D$. We have $\Delta \det X_i=\det K_i$ (and $\Delta \det X = \det K$, see above).

Let $\delta_c$ be the sign of the quantity $\det K_1\det K_2\det K_3/(\det K)^3$, which is also the sign of $\det X_1\det X_2\det X_3/(\det X)^3$ and the sign of $\det X_1\det X_2\det X_3/\det X$.
We define another sign, $\eta_c$, which is the sign of $\det X_1\det X_2\det X_3$
as a polynomial in the entries of $X$ occurring in the expansion of $\det X$.

It is convenient to choose an ordering for the nodes, which determines an ordering
for the rows and columns of $X$.
Index the nodes of $\G$
counterclockwise for white vertices and clockwise for black vertices.
Letting $w_1$ be the first white node, we assume that $b_1$ is the first black node clockwise from it. This particular ordering is just a convention, and helps with the proof of Theorem \ref{main} (although it does not affect the statement). We take a different order in Section \ref{Tn} and the examples in the appendix.

\begin{lemma}\label{signs} We have $\eps_c=\delta_c\eta_c$.
\end{lemma}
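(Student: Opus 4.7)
My plan is to reduce the identity to a concrete comparison of signs in the expansion of three determinants and then handle that comparison by a Kasteleyn-sign analysis of a boundary-free augmentation of $\G$.

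First, I would make $\eta_c$ concrete. For any two color-preserving bijections $\sigma,\sigma'\colon W\to B$ (meaning $\sigma(W_i)=B_i$ for $i=1,2,3$), the composition $\sigma^{-1}\sigma'$ preserves each $W_i$ setwise, so its cycles lie within the individual color classes and its sign factors as $\sgn(\sigma^{-1}\sigma')=\prod_i\sgn(\sigma_i^{-1}\sigma'_i)$. Hence $\sgn(\sigma)/\prod_i\sgn(\sigma_i)$ is independent of $\sigma$; taking the canonical $\sigma_0$ sending the $j$th color-$i$ white to the $j$th color-$i$ black in the fixed orderings makes each $(\sigma_0)_i$ the identity, giving $\eta_c=\sgn(\sigma_0)$.

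Second, I would reduce $\delta_c$ to Kasteleyn signs. Removing a boundary node from $\G$ merely merges its incident interior faces into the outer face and creates no new bounded face, so the Kasteleyn signs on $\G$ restrict to valid Kasteleyn signs on each $\G_i$. By Kasteleyn's theorem $\det K=\alpha Z(\G)$, $\det K_i=\alpha_i Z(\G_i)$, and $\Delta=\alpha_D Z_D$ with $\alpha,\alpha_i,\alpha_D\in\{\pm1\}$ and positive partition functions, so $\sgn(\det X)=\alpha\alpha_D$ and $\sgn(\det X_i)=\alpha_i\alpha_D$; a short computation gives $\delta_c=\alpha\prod_i\alpha_i$. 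Using the $3$-partition $\tau_0$ whose pairs are those of $\sigma_0$ (no triples appear since $|\bW|=|\bB|$), the recipe following Lemma \ref{samesign} yields $\eps_c=(-1)^{M_0}$, where $M_0$ is the number of nonmonochromatic crossings in the chord diagram of $\sigma_0$. The lemma thus reduces to the identity
\[
(-1)^{M_0}\,\sgn(\sigma_0)=\alpha\prod_{i=1}^3\alpha_i.
\]

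Finally, I would prove this identity by augmenting $\G$ to a planar bipartite graph $\G^+$ without boundary: for each $\sigma_0$-pair add an external arc in the complement of the disk, subdivided as necessary to preserve bipartiteness, and equipped with Kasteleyn-compatible signs; analogously build $\G_i^+$ using only the color-$i$ arcs. Kasteleyn's theorem applied to $\G^+$ and to each $\G_i^+$ relates $\alpha$ and $\alpha_i$ to the face-parity structure of the augmented graphs. A direct comparison then shows that each monochromatic crossing of $\sigma_0$ cancels between $\G^+$ and the relevant $\G_i^+$, each nonmonochromatic crossing contributes a single factor of $-1$, and the residual discrepancy is precisely the permutation sign $\sgn(\sigma_0)$ that records the reordering needed to align the ordering on $\bW,\bB$ with the color-sorted ordering implicit in $K_1\oplus K_2\oplus K_3$. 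The main obstacle is this last step: carefully enumerating the new bounded faces created by chord crossings, isolating the sign contribution of each nonmonochromatic crossing, and verifying that the residual permutation sign matches $\sgn(\sigma_0)$ on the nose.
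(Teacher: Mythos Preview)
Your first three reductions are correct and arguably cleaner than the paper's more implicit setup: the identifications $\eta_c=\sgn(\sigma_0)$, $\delta_c=\alpha\,\alpha_1\alpha_2\alpha_3$, and $\eps_c=(-1)^{M_0}$ are all valid, and the lemma does reduce to the identity $(-1)^{M_0}\sgn(\sigma_0)=\alpha\prod_i\alpha_i$.

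The gap is in your final step, and it is a real one, not just a matter of bookkeeping. Your graph $\G^+$, obtained by adding an external arc for every $\sigma_0$-pair, is in general \emph{not planar}: the arcs cross whenever $\sigma_0$ has crossings, which is exactly the situation you need to analyze. Kasteleyn's theorem does not apply to a nonplanar graph, and ``enumerating the new bounded faces created by chord crossings'' presupposes a planar structure that is not there. You cannot simply insert vertices at crossings either, since a degree-$4$ crossing vertex is not bipartite and any bipartite gadget you substitute will alter the dimer model in a way that has to be tracked separately. There is also an ambiguity in your $\G_i^+$: adding only the colour-$i$ arcs to $\G$ leaves an unbalanced graph, while adding them to $\G_i$ does not compute $\alpha_i$ in any direct way. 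The construction that actually works is the opposite one---adding the arcs of colours $\ne i$ to $\G$, which \emph{can} be made planar for each fixed $i$---and this is what the paper exploits.

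The paper avoids a direct comparison of all four Kasteleyn signs at once. Instead it argues by invariance: after observing (via the single-colour planar augmentations $\G^1,\G^2,\G^3$) that each $\sgn(\det X_i/\det X)$ depends only on boundary distances mod $4$, it checks that $\eps_c\delta_c\eta_c$ is unchanged under three elementary moves---swapping colours on two adjacent nodes of the same type, swapping positions of adjacent $b,w$ of the same colour, and swapping positions of adjacent $b,w$ of different colours---and then verifies the identity in one canonical base configuration where all three signs are $+1$. This ``base case plus generators'' strategy never needs more than one planar external pairing at a time, which is precisely how it dodges the crossing problem your approach runs into.
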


\begin{proof}
Consider all the vertices on the outer face of $\G$; some subset of these are nodes. 
As noted above we index the nodes
counterclockwise for white vertices and clockwise for black vertices.
Let $w_1$ be the first white node, and assume that $b_1$ is the first black node clockwise from it.

Let us compute the sign of $\det X_1/\det X= \det K_1/\det K$. We add external edges to $\G$ outside the disk, 
connecting white nodes $W_2\cup W_3$ to black nodes $B_2\cup B_3$ in pairs, and so that the resulting graph $\G^1$ is still planar. 
We can extend the Kasteleyn signs on $\G$ to $\G^1$ by assigning signs to the new edges. 
Let $K^1$ be the enlarged Kasteleyn matrix. In $\det K^1$, all dimer covers have the same sign by Kasteleyn's theorem; 
but dimer covers which use none of the new edges have the
same sign as in $\det K$. This sign is also the sign of dimer covers using all the new edges, which by definition have the sign of $\det K_1$ times the product of the new signs.
Thus $\det K_1/\det K$ has sign given by the product of the Kasteleyn signs on the new edges.
Likewise construct $\G^2,\G^3$ and $\det K_2/\det K, \det K_3/\det K.$

By the gauge invariance of the choice of Kasteleyn signs, we can choose Kasteleyn signs on the original graph $\G$ so that all boundary edges have sign $+$,
except possibly for the edge $e_0$ 
just clockwise of $w_1$ which has sign $-1$ if the boundary has length $0\bmod 4$.
Then the Kasteleyn sign on an external edge of $\G^i$ depends on the distance $\ell$ in $\G$ between its vertices, measured
around the arc of the boundary which does not cross $e_0$: the sign is $(-1)^{(\ell-1)/2}$.

Thus the sign of each $\det X_i/\det X$ depends only on the distances (mod $4$) between nodes.

Suppose we move a black node $b_i$, of color $1$, increasing its distance from the previous node by $2$ and decreasing its distance from the next node by $2$. That is, we make $b_i$ an internal
vertex and instead assign the next (clockwise) black vertex around the boundary to be a node,
the ``new" $b_i$.
Then both $\det X_2/\det X$ and $\det X_3/\det X$ will change sign (since whatever white node $b_i$ was matched to externally will
have its distance to $b_i$ changed by $\pm2$), while $\det X_1/\det X$ will not change sign.  This results in no net sign change of $\delta_c$. Moreover $\eps_c,\eta_c$ do not change under this operation. 
So we see that $\delta_c,\eps_c$ and $\eta_c$ 
only depend on the circular order of the nodes (and their colors), not their individual distances. 

Suppose initially that white nodes and black nodes lie in disjoint intervals, so that counterclockwise
starting from $w_1$ we see $w_1,\dots,w_n, b_n,\dots,b_1$. Suppose also that colors are in the 
counterclockwise order $W_1,W_2,W_3$ then $B_3,B_2,B_1$. That is, 
starting from the first white node $w_1$ and proceeding counterclockwise we see all white nodes
of color $1$, then all white nodes of color $2$, and so on, ending with all black nodes of color $1$.
In this case we can add external edges in each of $\G^1,\G^2,\G^3$ connecting each $w_i$ to $b_i$. In $\det X_1\det X_2\det X_3/(\det X)^3$ each sign on these
external edges contributes twice, so $\delta_c=1$. Moreover $\eps_c=1$ and $\eta_c=1$ in this case, so the statement holds.

It remains to see how the sign changes when we reorder the boundary nodes or colors. There are three cases to consider:
we switch colors on adjacent nodes, both of which are black or both of which are white; we switch positions of 
a black and white adjacent node of the same color;
and we switch positions of a black and white adjacent node of different colors. In each case we compute the change in sign of $\delta_c,\eta_c$ and $\eps_c$ and show that $\eps_c\delta_c\eta_c$ remains constant.

Suppose we switch colors, say color $1$ and $2$, on two adjacent black nodes, separated by (even) 
distance $\ell$. Then both $\eps_c$ and $\eta_c$ will change.
In both $\det X_1/\det X$ and $\det X_2/\det X$ the sign will change by $(-1)^{\ell/2}$, while the sign of 
$\det X_3/\det X$ will not change. Thus $\delta_c$ changes by $(-1)^\ell=+1$, that is, does not change. 
A similar argument holds if we switch colors on two adjacent white nodes.

Suppose we switch positions of two adjacent nodes $b,w$, one black and one white, of the same color, say color $1$.
Then $\eta_c$ and $\eps_c$ do not change. 
In the graphs $\G^2$ and $\G^3$, we can assume $b,w$ are paired by an external edge, since they are adjacent. 
When we swap them, keeping the same distance, the
signs of $\det X_2/\det X$ and $\det X_3/\det X$ do not change; neither does the sign of $\det X_1/\det X$. Thus $\delta_c$ does not change.  

Suppose we switch positions of two adjacent nodes $b,w$, one black and one white, of odd distance $\ell$, and of different colors, say colors $1$ and $2$ without loss of generality (and we think of colors as attached to the nodes, so the colors also swap positions). Suppose for example $b$ advances by $\ell+1$ cclw around the boundary, and $w$ retreats by $\ell-1$. Then $\eps_c$ changes, and $\eta_c$ does not change.
In the graph $\G^3$, there is no sign change (we can assume $w,b$ are paired in $\G^3$). In the graph $\G^1$ or $\G^2$,
there is a sign change of $(-1)^{(\ell-1)/2}$ and $(-1)^{(\ell+1)/2}$ respectively, resulting in $(-1)^\ell = -1$, a net sign change.
Consequently $\eps_c\delta_c\eta_c$ is invariant in this case as well.
 \end{proof}

\subsection{Partition function}

Fix $\G$ as before with $n$ nodes, satisfying (\ref{bdybalanced}), $k$ of which are white and $n-k$ of which are black.
Let $\Omega$ be the set of multiwebs in $\G$.

We define the \emph{partition function} $Z=Z(\G)$ by
\be\label{Z}Z=\sum_{m\in\Omega}\nu(m)\Tr_{m} =\sum_{m\in\Omega}\nu(m)\sum_{\lambda\in\Lambda}C_\lambda(m)\Tr_{\lambda}\ee
where the $C_\lambda(m)$ are defined in (\ref{mred}).
Then $Z$ is itself an $\SL_3$-invariant multilinear function on $Y^{\otimes k}\otimes (Y^*)^{\otimes n-k}$.
By interchanging the order of summation we can write
\be\label{Zdef}Z=\sum_{\lambda\in\Lambda}C_{\lambda}\Tr_{\lambda}\ee
where $C_{\lambda} = \sum_{m\in\Omega}\nu(m)C_{\lambda}(m).$
We call $C_\lambda$ the \emph{partition function for class $\lambda$}.

Our main result is a computation of $C_\lambda$.
We show that 
\be\label{Pdef} C_\lambda = \Delta^3 P_\lambda
\ee where $P_\lambda$ is a certain integer-coefficient polynomial in the entries in the boundary measurement matrix $X$,
and $\Delta$, which is independent of $\lambda$, 
is the determinant of a submatrix of the associated Kasteleyn matrix, as defined in Section \ref{Kastsection} above.

If $c:V_\partial\to\C$ be a feasible coloring of the nodes, 
then $Z(c)$ is a number, and 
\be\label{Zcdef}Z(c)=\sum_{m\in\Omega}\nu(m)\Tr_{m}(c) =\sum_{\lambda\in\Lambda}C_{\lambda}\Tr_{\lambda}(c).\ee

By Lemma \ref{samesign}, for feasible $c$, all terms in (\ref{Zcdef}) have the same sign, and so $Z(c)$ defines a natural probability measure $\text{Pr}_c$ on $\Lambda$, with
\be\label{Pr}\text{Pr}_c(\lambda) = \frac{C_\lambda \Tr_{\lambda}(c)}{Z(c)}.\ee
This is the probability that a ``random reduction" of a random multiweb (with boundary colors $c$) will have shape $\lambda$. More precisely, choose a ($\Tr_c$-weighted) random multiweb $m$ with boundary colors $c$, and consider all possible reduced webs (with multiplicity) after applying skein relations to $[m]$.
Choose one of these uniformly at random, and consider its shape $\lambda$.

We have an explicit determinantal expression for $Z(c)$.
Let $V_\partial = V_1\cup V_2\cup V_3$ be the partition of $V_\partial$ into the nodes of color $1,2,3$ respectively.
Let $\G_i$ be the graph obtained from $\G$ by removing nodes not in $V_i$. Note that $\G_i$ is balanced:
it has the same number of black vertices as white vertices, by (\ref{balanced}).
Let $Z_i = |\det K(\G_i)|$, the determinant of the Kasteleyn matrix of $\G_i$. Here we can take
$K(\G_i)$ to be the Kasteleyn matrix
of $\G$ restricted to $\G_i$, since removing vertices from the outer face of $\G$ yields a submatrix of $K$ also satisfying the Kasteleyn condition. 

\begin{lemma}\label{Zlem} We have
$Z(c) = \eps_cZ_1Z_2Z_3$ where $\eps_c$ is the sign from Lemma \ref{samesign}.
\end{lemma}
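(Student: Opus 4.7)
The plan is to unravel the definition of $Z(c)$, decompose each colored multiweb into its three monochromatic sub-dimer covers, and recognize the three resulting sums as Kasteleyn determinants.

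First, I would expand $Z(c)=\sum_{m\in\Omega}\nu(m)\Tr_m(c)$ using the combinatorial definition of the trace from Section~\ref{tracedef}: $\Tr_m(c)$ is a signed count of edge colorings of $m$ compatible with the boundary coloring $c$. By Lemma~\ref{samesign}, every such coloring on every $m$ carries the same sign $\eps_c$, so I can factor out $\eps_c$ and write
\be
Z(c)=\eps_c\sum_{m\in\Omega}\nu(m)\,N_c(m),
\ee
where $N_c(m)$ is the \emph{unsigned} number of edge colorings of $m$ whose boundary edge at each node $v$ has color $c(v)$.

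Next, the key structural observation: pairs $(m,\eta)$ of a multiweb $m$ and a compatible coloring $\eta$ are in bijection with ordered triples $(M_1,M_2,M_3)$ of dimer covers of $\G_1,\G_2,\G_3$. Given a coloring $\eta$ of $m$, let $M_i$ be the set of edges whose color-set contains $i$ (counted with multiplicity one). The constraint that all three colors appear at each internal vertex of $m$, together with the colored boundary condition, forces $M_i$ to cover every vertex of $\G_i$ exactly once; conversely, superimposing any three dimer covers $M_1,M_2,M_3$ of $\G_1,\G_2,\G_3$ yields a multiweb $m$ with edge multiplicities $m_e=\#\{i:e\in M_i\}$ together with a compatible coloring. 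Under this bijection the weight factors multiplicatively: an edge of multiplicity $k$ in $m$ lies in exactly $k$ of the $M_i$'s, so
\be
\nu(m)=\prod_{e\in E}\nu_e^{m_e}=\nu(M_1)\nu(M_2)\nu(M_3).
\ee

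Swapping the order of summation over $m$ and $\eta$ then gives
\be
\sum_{m\in\Omega}\nu(m)\,N_c(m)=\sum_{M_1,M_2,M_3}\nu(M_1)\nu(M_2)\nu(M_3)=\mathcal{Z}_1\mathcal{Z}_2\mathcal{Z}_3,
\ee
where $\mathcal{Z}_i=\sum_{M_i}\nu(M_i)$ is the ordinary dimer partition function of $\G_i$. Finally I would invoke Kasteleyn's theorem for each $\G_i$: since removing boundary vertices from the outer face of $\G$ preserves the Kasteleyn sign condition on every bounded face, the restriction $K(\G_i)$ is a valid Kasteleyn matrix, so $\mathcal{Z}_i=|\det K(\G_i)|=Z_i$.

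The main subtlety — and the one place to be careful — is the bijection step and the weight identity: one must remember that $\nu(m)$ is defined with multiplicity, and that the isolated triple edges (multiplicity $3$) correspond to an edge lying in all three $M_i$'s. Once this bookkeeping is verified, the lemma follows by combining the displayed equations: $Z(c)=\eps_c Z_1Z_2Z_3$.
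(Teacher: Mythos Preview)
Your proposal is correct and follows essentially the same approach as the paper's proof: factor out $\eps_c$ via Lemma~\ref{samesign}, identify colored multiwebs with triples of dimer covers of the $\G_i$, and invoke Kasteleyn's theorem for each $\G_i$. Your version is simply more explicit about the bijection and the weight factorization than the paper's terse paragraph.
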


\begin{proof} 
By (\ref{Zcdef}), $Z(c)$ is weighted sum of multiwebs, multiplied by their traces. By Lemma \ref{samesign}, 
the trace is a global sign $\eps_c$ 
times the number of colorings. So $Z(c)$ is $\eps_c$ times the weighted number of colored multiwebs with node colors $c$.
A colored multiweb with node colors $c$ is the same as a $3$-tuple of dimer covers, one of each color, where the dimer cover of color $i$ is a dimer cover
of the graph $\G_i$. 
But $Z_i=|\det K(\G_i)|$ is the positive weighted sum of dimer covers of $\G_i$.
\end{proof}

By definition of $\delta_c$ we can write
$Z_1Z_2Z_3=\eps_K\delta_c\det K(\G_1)\det K(\G_2)\det K(\G_3)$ 
where $\eps_K$ is the sign of $\det K$.
Thus
\be\label{eee}Z(c) = \eps_c\delta_c\eps_K\det K(\G_1)\det K(\G_2)\det K(\G_3).\ee

\section{Examples}

We illustrate here the computation of partition functions and probabilities for reduced webs in the cases when 
$\G$ has $4$ and $5$ nodes. 
Cases with $2$ or $3$ nodes are trivial, since there is only one reduced web class in these cases, see Figure \ref{detweb}.
In the appendix we illustrate the cases with $6$ nodes.

\subsection{$(w,b,w,b)$}

When $n=4$, by (\ref{bdybalanced}) we must have $|\bW|=|\bB|=2$, and there are two possible types, up to rotational symmetry, which are $p=(w,b,w,b)$ and $p=(w,w,b,b)$.

Suppose $p=(w,b,w,b)$.
The matrix $X$ is $2\times 2$. 
The set $\Lambda$ consists of two classes of reduced webs, $\lambda_1,\lambda_2$, shown in Figure \ref{Rt41}.
\begin{figure}[htbp]
\begin{center}\includegraphics[width=2.5in]{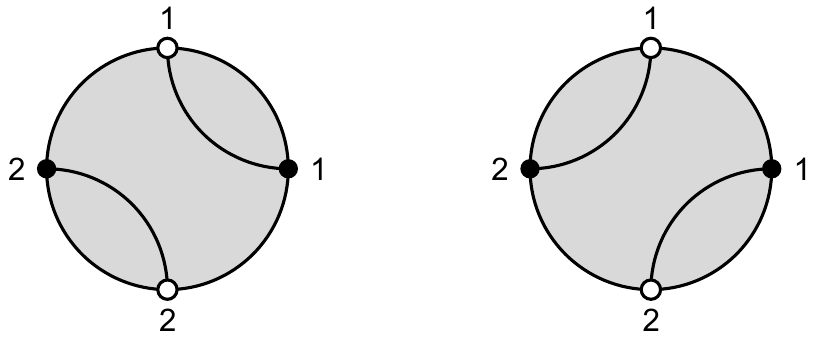}\end{center}
\caption{\label{Rt41}The two classes of webs with $4$ boundary points and $p=(w,b,w,b)$.}
\end{figure}
As a consequence of Theorem \ref{main} below, we have (up to a global sign)
\be\label{4n1}P_{\lambda_1}= X_{11|22} = X_{1,1}X_{2,2}\ee
and
\be\label{4n2}P_{\lambda_2} = X_{12|21} = -X_{1,2}X_{2,1}\ee
where $P_\lambda$ is defined in (\ref{Pdef}).

For example take color $c\equiv 1$ for the four nodes. The traces of both webs are $1$.
The probabilities of the two webs are then
\be\label{4ptprobs}Pr_c(\lambda_1) = \frac{X_{1,1}X_{2,2}}{X_{1,1}X_{2,2}-X_{1,2}X_{2,1}},~~~Pr_c(\lambda_2) = \frac{-X_{1,2}X_{2,1}}{X_{1,1}X_{2,2}-X_{1,2}X_{2,1}}.\ee

For a concrete setting for this example, consider the graph $\G$ of Figure \ref{4nodeexample}, with edge weights indicated.
\begin{figure}[htbp]
\begin{center}\includegraphics[width=1.7in]{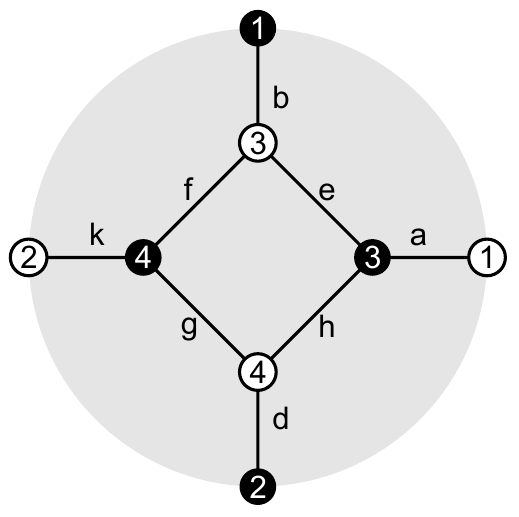}\end{center}
\caption{\label{4nodeexample}Graph with four nodes and $p=(w,b,w,b)$.}
\end{figure}
Then $\Omega_c(\G)$ has $3$ multiwebs, two of which are already reduced and the third of which is reducible into one of each web class.
We have
\be\label{zc1}Z(c) =  abde^2g^2k + abdf^2h^2k + 2abdefghk\ee
which is a sum of the weights (times traces) of the three multiwebs.
The last term corresponds to a non-reduced web with a quad face, which can be reduced to one copy of each reduced web class. 
We thus have 
\begin{align*}C_{\lambda_1} &= abde^2g^2k+abdefghk\\
C_{\lambda_2} &= abdf^2h^2k+abdefghk.
\end{align*}

A Kasteleyn matrix for the given vertex order is
$$K=\begin{pmatrix}
0&0&b&0\\0&0&0&d\\
a&0&e&h\\0&k&-f&g
\end{pmatrix}
$$
for which we find
$$Z(c) = Z(\G)Z(\G_{int})^2= abdk(eg+fh)^2,$$ 
equal to (\ref{zc1}),
and the corresponding reduced Kasteleyn matrix is
$$X= \frac{1}{\Delta}\begin{pmatrix}-abg&bkh\\-adf&-kde\end{pmatrix}$$
where $\Delta = \det D = \det\begin{pmatrix}e&f\\-h&g\end{pmatrix} = eg+fh.$
From this one can verify (\ref{4n1}) and (\ref{4n2}). Note that a different choice of Kasteleyn signs might change the sign of (both of) $P_{\lambda_1},P_{\lambda_2}$ but the probabilities remain unchanged.

Suppose we had chosen a different set of node colors $c'$, for example color $1$ at black and white nodes $1$ and color $2$ at black and white nodes $2$.
Then $\lambda_2$ is incompatible with $c'$, and so $\Tr_{\lambda_2}(c)=0$. We still have $\Tr_{\lambda_1}(c)=1$.
In this case 
\begin{align*}
Z_{c'} &= Z(\G\setminus\{2,{\bf 2}\})Z(\G\setminus\{1,{\bf 1}\})Z(\G_{int})\\
& = (abg)(dek)(eg+fh)\\
&=abde^2g^2k+abdefghk\\ 
&= C_{\lambda_1}\\
&=C_{\lambda_1}\Tr_{\lambda_1}(c') + C_{\lambda_2}\Tr_{\lambda_2}(c').
\end{align*}

\subsection{$(w,w,b,b)$}

The other case with $n=4$ has $p=(w,w,b,b)$. 
Again $\Lambda$ has two different reduced web classes, shown in Figure \ref{Rt42}.
\begin{figure}[htbp]
\begin{center}\includegraphics[width=3in]{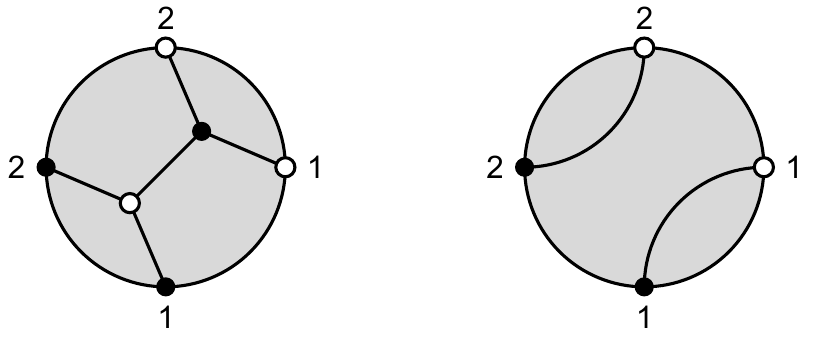}\end{center}
\caption{\label{Rt42}The two classes of webs with $4$ boundary points and $p=(w,w,b,b)$.}
\end{figure}

Again $X$ is a $2\times2$ matrix. Theorem \ref{main} shows that, up to a global sign,
$$P_{\lambda_1}= X_{1,2}X_{2,1}$$
and
$$P_{\lambda_2} = X_{1,1}X_{2,2}-X_{1,2}X_{2,1}.$$

As a concrete realization,
let $\G$ be the $3\times 2$ grid graph of Figure \ref{2by3}.
A Kasteleyn matrix is 
$$K=\begin{pmatrix}a&0&b\\0&k&d\\-g&-e&f
\end{pmatrix}$$ and 
$$X=\begin{pmatrix}a+\frac{bg}f&\frac{be}f\\\frac{dg}f&k+\frac{de}f\end{pmatrix}$$
with $\Delta = f$.  

\begin{figure}[htbp]
\begin{center}\includegraphics[width=1.5in]{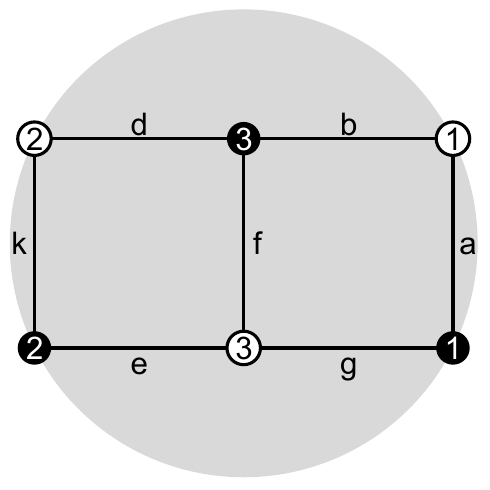}\end{center}
\caption{\label{2by3}Graph with four nodes and $p=(w,w,b,b)$.}
\end{figure}

\subsection{$(b,w,w,w,w)$}

When $n=5$ there is, up to rotation, only one possible type pattern, $p=(b,w,w,w,w)$. 
In this case $\Lambda$ consists in 3 classes $\lambda_1,\lambda_2,
\lambda_3$, shown in Figure \ref{Rt51}. Now $X$ is a $4\times 2$ matrix,
and Theorem \ref{main} shows that, up to a global sign,
\begin{align} \nonumber P_{\lambda_1}&=X_{1,2}X_{2,2}(X_{3,1}X_{4,2}-X_{4,1}X_{3,2})\\
\label{Ppwwww}P_{\lambda_2}&=X_{3,2}X_{4,2}(X_{1,1}X_{2,2}-X_{2,1}X_{1,2})\\
\nonumber P_{\lambda_3}&=X_{1,2}X_{4,2}(X_{2,1}X_{3,2}-X_{3,1}X_{2,2}).
\end{align}

\begin{figure}[htbp]
\begin{center}\includegraphics[width=4in]{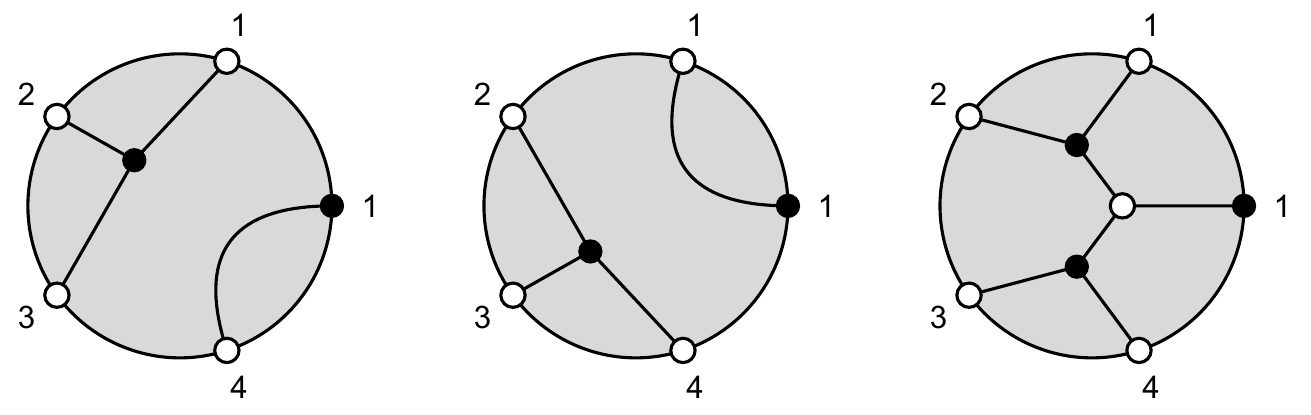}\end{center}
\caption{\label{Rt51}Classes of webs with $5$ boundary points and $p=(b,w,w,w,w)$.}
\end{figure}

\section{Reduction matrix}\label{sgnscn}

\subsection{The reduction matrix $\P$}

For a fixed type vector $p$, we define a matrix $\P=\P(p)$ with rows indexed by reduced web classes $\Lambda$ and columns indexed by $3$-partitions $\Pi$, as follows.
Using the basic skein relation (\ref{skeinrel1}) each $3$-partition $\tau\in \Pi$ can be converted into a unique formal 
linear combination of planar web classes $\lambda\in \Lambda$:
\be\label{tausum}\tau=\sum_{\lambda\in \Lambda}\P_{\lambda,\tau}\lambda.\ee
That is, 
each $3$-partition can be represented as a nonplanar diagram in a disk, as in Section
\ref{3pttnscn}. Now 
replace each crossing
with the corresponding linear combination of the two locally planar resolutions. Reduce each resulting planar diagram
using the planar skein relations (\ref{skeinrels}). The resulting
linear combination of reduced planar webs is unique, independent of the original drawing of the $3$-partition in the disk (only depending on its isotopy class) and independent of the order of
reductions.
We call the matrix $\P$ the \emph{reduction matrix from $3$-partitions to reduced web classes},
or \emph{reduction matrix} for short.

As an example, consider the case $n=5$ with $p=(b,w,w,w,w)$. Ordering $\Lambda$ as in Figure \ref{Rt51},
and using $\Pi = \{{\bf1}1|234, {\bf1}2|134, {\bf1}3|124,{\bf1}4|123\}$ (where boldface denotes black nodes), the reduction matrix $\P$ is
given in table \ref{pwwwwtable}.

\begin{table}
\centerline{
\begin{tabular}{r@{$\,\,\,$}|c@{$\,\,\,$}c@{$\,\,\,$}c@{$\,\,\,$}c@{$\,\,\,$}}
        &\rf{{\bf1}1|234} & \rf{{\bf1}2|134} & \rf{{\bf1}3|124}  & \rf{{\bf1}4|123} \\
        \hline
$1$  &0&0&1&1 \\
$2$   &1&1&0&0\\
$3$   &0&-1&-1&0
\end{tabular}}
\caption{\label{pwwwwtable} The reduction matrix for $p=(b,w,w,w,w)$.}
\end{table}


\subsection{$X_\tau$ variables}

For $\tau\in \Pi$ define variables $X_\tau$ as follows.
First, if there are no triples, that is, if $|\bW|=|\bB|$, then we can think of $\tau$ as a bijection $\sigma$ from white nodes to black nodes, that is, from $[n]$ to $[n]$ (identifying nodes with rows and columns of $X$)
and in this case 
\be\label{Xtaupair}X_\tau=(-1)^\sigma\prod_{w\in\bW}X_{w\sigma(w)}.\ee
This is consistent with the definition in \cite{KW11}. 

If there are triples in $\tau$, the matrix $X$ is not square.
In $X$ replace each column for $b\in B_{int}^*$
with three consecutive identical columns. This yields a square matrix $\tilde X$.
For each $b\in B_{int}^*$, we now treat $b$ as three nodes which are three copies of itself:
we add three consecutive black nodes $b_1,b_2,b_3$ to the set $\bB$ (in clockwise order, at some point along the boundary).

Let $\Sigma$ be the set of bijections from triples of $\tau$ to $B_{int}^*$.
Given $\tau$ and a bijection $\alpha\in\Sigma$ we extend $\alpha$ to a 
bijection (pairing) $\pi$ from the set of all white nodes to the augmented set of black nodes,
by sending, for each triple $t=w_1w_2w_3$, the nodes $w_1,w_2,w_3$ to $b_1,b_2,b_3$ in that order, where $b=\alpha(t)$ (the pairs of $\tau$ are still paired in $\pi$). See Figure \ref{tripletobij}. 
\begin{figure}[htbp]
\begin{center}\includegraphics[width=3in]{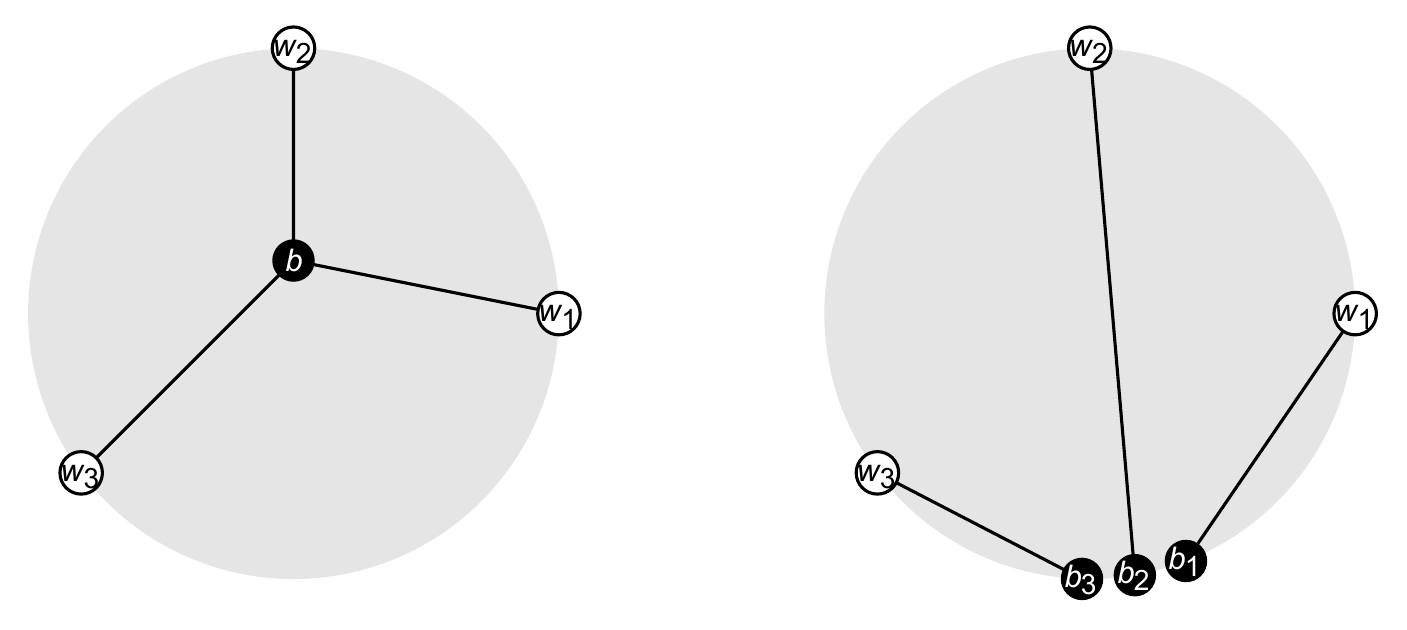}\end{center}
\caption{\label{tripletobij}Converting triples to pairings. On the left, a triple $t$ of $\tau$, with center vertex $b=\alpha(t)$. 
On the right, the corresponding pairing: we push $b$ to the boundary, replacing it with three new consecutive nodes
$b_1,b_2,b_3$ in clockwise order, connecting the $w_i$s to the $b_i$s. 
}
\end{figure}

Define
\be\label{Xtaudef}X_\tau := \sum_{\alpha\in\Sigma}\tilde X_\pi\ee
where $\pi=\pi(\alpha)$, and 
$\tilde X_\pi$ is defined for the pairing $\pi$ as in (\ref{Xtaupair}). Note that if we change $\pi$ to $\pi'$ by changing the bijection
between the $w_1,w_2,w_3$ and the $b_1,b_2,b_3$, then $\tilde X_{\pi'}=\pm \tilde X_\pi$ changes only by the sign of this bijection, since
the corresponding columns $b_1,b_2,b_3$ of $\tilde X$ are identical. 


In the above example, where $p=(b,w,w,w,w)$, the matrix $X$ is $4\times 2$.
 We have 
\begin{align}\nonumber X_{{\bf 1}1|234} &= X_{11}X_{22}X_{32}X_{42}\\
\label{Xtaupwwww}X_{{\bf 1}2|134} &= -X_{21}X_{12}X_{32}X_{42}\\
\nonumber X_{{\bf 1}3|124} &= X_{31}X_{12}X_{22}X_{42}\\
\nonumber X_{{\bf 1}4|123} &= -X_{41}X_{12}X_{22}X_{32}.
\end{align}
Note that if we relabel the second indices $2$ with $2,3,4$ (to correspond to their column in $\tilde X$) the sign is $(-1)^\pi$.

\subsection{Main result}

\begin{thm}\label{main}
We have $P_{\lambda} = \eps_K\sum_{\tau\in \Pi} \P_{\lambda,\tau}X_{\tau},$
where $\eps_K$ is a sign independent of $\lambda$, depending only on the choice of Kasteleyn signs. 
\end{thm}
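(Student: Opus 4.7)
The plan is to compare two expressions for the partition function $Z(c)$ evaluated at a feasible boundary coloring $c$. On one side, (\ref{Pdef}) and (\ref{Zcdef}) give $Z(c) = \Delta^3 \sum_\lambda P_\lambda \Tr_\lambda(c)$. On the other side, Lemma \ref{Zlem} together with (\ref{eee}) yields $Z(c) = \eps_c\delta_c\eps_K\det K(\G_1)\det K(\G_2)\det K(\G_3)$. In the balanced case $|\bW|=|\bB|$, $X$ is square, so the Schur complement identity $\det K_i = \Delta\det X_i$ combined with Lemma \ref{signs} (which gives $\eps_c\delta_c=\eta_c$) yields
$$Z(c) = \eta_c\eps_K\Delta^3\det X_1\det X_2\det X_3.$$

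The key computational identity to establish is
$$\eta_c\det X_1\det X_2\det X_3 \;=\; \sum_{\tau\in\Pi} X_\tau \Tr_\tau(c).$$
In the balanced case, expanding the three determinants, each monomial corresponds to a triple of bijections $(\sigma_1,\sigma_2,\sigma_3)$ with $\sigma_i:W_i\to B_i$, which combine into a single pairing $\sigma:\bW\to\bB$ automatically compatible with $c$. By the very definition of $\eta_c$ as the sign reconciling monomials of $\det X_1\det X_2\det X_3$ with those of $\det X$, the left-hand side collapses to $\sum_\sigma \sgn(\sigma)\prod_w X_{w,\sigma(w)}$. Since each $\tau\in\Pi$ is a pure pairing with $\Tr_\tau(c)\in\{0,1\}$ in this case, this is precisely the right-hand side.

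For the general case $|\bW|>|\bB|$, I would reduce to the balanced case via the augmentation construction of Section \ref{graphsection}, enabled by the existence of the partial matching $M'$ in the nondegeneracy hypothesis. The augmentation enlarges $\G$ to a balanced graph $\G_{new}$ by replacing each $b\in B_{int}^*$ with three consecutive boundary black nodes $b_1,b_2,b_3$. A pair $(\tau,\alpha)\in\Pi\times\Sigma$ corresponds to a pairing of $\G_{new}$ in which each triple $t=w_1w_2w_3$ of $\tau$ becomes the three pairs $(w_j,b_j)$ with $b=\alpha(t)$; summing over $\alpha$ (and absorbing the orderings of the $w_j$ via the identical columns $b_1,b_2,b_3$ of $\tilde X$) recovers $X_\tau$ of (\ref{Xtaudef}). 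The cyclic position of the new nodes $b_1,b_2,b_3$ on the boundary of $\G_{new}$ makes the permutation sign of $\tilde X_{\pi(\alpha)}$ match the clockwise/counterclockwise orientation sign $\Tr_\tau(c)=(-1)^\ell$ defined in Section \ref{3pttnscn}.

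Having established $Z(c) = \eps_K\Delta^3\sum_\tau X_\tau\Tr_\tau(c)$, I apply the skein expansion (\ref{tausum}), which on traces reads $\Tr_\tau(c)=\sum_\lambda \P_{\lambda,\tau}\Tr_\lambda(c)$, to obtain
$$\sum_\lambda P_\lambda\Tr_\lambda(c) \;=\; \sum_\lambda\Bigl(\eps_K\sum_\tau \P_{\lambda,\tau}X_\tau\Bigr)\Tr_\lambda(c)$$
for every feasible $c$. Kuperberg's linear independence of $\{\Tr_\lambda\}_{\lambda\in\Lambda}$ as $\SL_3$-invariant multilinear functions (whose values on tuples of basis vectors are recorded exactly by evaluations at boundary colorings) forces coefficientwise equality, giving $P_\lambda = \eps_K\sum_\tau\P_{\lambda,\tau}X_\tau$. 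The main obstacle is the sign tracking in the triple case: verifying that the augmentation correctly converts the triple orientation sign of $\Tr_\tau(c)$ into the permutation sign of the corresponding pairing in $\G_{new}$, at which point Lemma \ref{signs} handles the underlying interaction between $\eps_c$, $\delta_c$, and $\eta_c$ in the resulting balanced setting.
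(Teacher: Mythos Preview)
Your proposal is correct and follows essentially the same route as the paper's proof: establish $Z(c)/\Delta^3=\eps_K\sum_\tau X_\tau\Tr_\tau(c)$ in the balanced case by expanding $\det X_1\det X_2\det X_3$ and invoking Lemma~\ref{signs}, reduce the unbalanced case to the balanced one via the augmentation of $\G$ (the paper does this with an explicit gadget and an $\eps\to0$ limit, fixing the coloring $\tilde c=(1,2,3)$ on each new triple of black nodes to pin down the sign matching you describe), then apply the skein expansion $\Tr_\tau=\sum_\lambda\P_{\lambda,\tau}\Tr_\lambda$ and Kuperberg's linear independence. The only places where your outline is thinner than the paper are the concrete gadget construction and the explicit verification that $\tilde X_\rho\Tr_\rho(\tilde c)=\tilde X_\pi\Tr_\tau(c)$ for each $\alpha$, but you have correctly identified these as the points requiring care.
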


As an example, combining table (\ref{pwwwwtable}) with (\ref{Xtaupwwww}) we get (\ref{Ppwwww}): for example
$$P_{\lambda_1} = X_{{\bf 1}3|124}+X_{{\bf 1}4|123} = X_{31}X_{12}X_{22}X_{42}-X_{41}X_{12}X_{22}X_{32}.$$

\begin{proof}[Proof of Theorem \ref{main}]
Suppose first that $|\bW|=|\bB|$, so there are no triples. 
Let $c$ have $p,q$ and $r$ white nodes of color $1,2,3$ respectively 
(and thus $p,q,r$ black nodes of color $1,2,3$ also). 
Let $W_i\subset\bW$ be the white nodes of color $i$
and $B_i\subset\bB$ be the black nodes of color $i$.  
By Lemma \ref{Zlem}, the partition function is $Z(c) = \eps_cZ_1Z_2Z_3$.  

We write (see Lemma \ref{Zlem} and (\ref{eee}))
\begin{align*}
Z(c) &= \eps_cZ_1Z_2Z_3\\
&= \eps_c\eps_K\delta_c\det K(\G_1)\det K(\G_2)\det K(\G_3).
\end{align*}

Thus with $X_i=X_{W_i}^{B_i}$ we have
\begin{align}\nonumber
\frac{Z(c)}{\Delta^3}&=\eps_c\eps_K\delta_c\det X_1\det X_2\det X_3\\\nonumber&=\eps_c\eps_K\delta_c\Big(\sum_{\sigma_1}(-1)^{\sigma_1}X_{i_1\sigma_1(i_1)}\dots X_{i_p\sigma_1(i_p)}\Big)\Big(\sum_{\sigma_2}(-1)^{\sigma_2}(\dots)\Big)
\Big(\sum_{\sigma_3}(-1)^{\sigma_3}(\dots)\Big)\\
\nonumber&= \eps_c\eps_K\delta_c\eta_c\sum_{\tau\in\Pi} (-1)^{\tau}X_{1\tau_1}\dots X_{n\tau_n}\Tr_{\tau}(c)\\
\label{XTr}&=\eps_K\sum_{\tau\in\Pi} X_\tau\Tr_{\tau}(c)
\end{align}
where in the third line we used the definition of $\eta_c$ and in the last line we used Lemma \ref{signs}.

Now (\ref{XTr}), which holds for each $c$, writes the function $Z$ as a linear combination of traces $\Tr_\tau$. 
Using (\ref{tausum}), we have
$$\Tr_{\tau} = \sum_\lambda\P_{\lambda,\tau}\Tr_{\lambda}.$$
Plugging this into (\ref{XTr}),
$$\frac{Z}{\Delta^3} = \eps_K\sum_{\tau\in \Pi} X_{\tau}\sum_{\lambda\in\Lambda}\P_{\tau,\lambda}\Tr_{\lambda}.$$
Interchanging the order of summation,
$$\frac{Z}{\Delta^3} = \eps_K\sum_{\lambda\in\Lambda}(\sum_{\tau\in \Pi} X_{\tau}\P_{\tau,\lambda})\Tr_{\lambda}.$$
Since the $\{\Tr_{\lambda}\}_{\lambda\in\Lambda}$ are independent functions (see Section \ref{tracedef} and \cite{Kuperberg}), this allows us to identify $P_{\lambda}$ with the coefficient of $\Tr_{\lambda}$ in this sum,
that is, $P_{\lambda}=\eps_K\sum_{\tau\in \Pi} X_{\tau}\P_{\tau,\lambda}$ as desired.

Now suppose we have $|\bW|> |\bB|$.
We reduce this case to the previous case, as follows. 
Recall the definition of $W^*$ from Section \ref{graphsection}; see Figure \ref{almost}. 
We add a spike to each $w\in W^*$, splitting
$w$ into two white vertices $w,w_s$ connected by a black vertex $b$ of degree $2$, so that neighbors of $w$ are now neighbors of $w_s$,
and $w$ is adjacent only to $b$. The new edges have weight $1$. See Figure \ref{augmented}, left.
We can then assume that $B_{int}^*$ consists of these new $b$ vertices.

We now add a ``gadget'' to $\G$ consisting of, for each $w\in W^*$, three new consecutive black nodes $b_1,b_2,b_3$,
and one white internal vertex, located on the same side of $w$ as the vertex $w'\in W^{**}$ associated to $w$. These are connected as shown in Figure \ref{augmented}. The edges
connecting $b_1,b_3$ to $w$ and $w''$ have weight $\eps$; other new edges have weight $1$. 
\begin{figure}[htbp]
\begin{center}\includegraphics[width=2.5in]{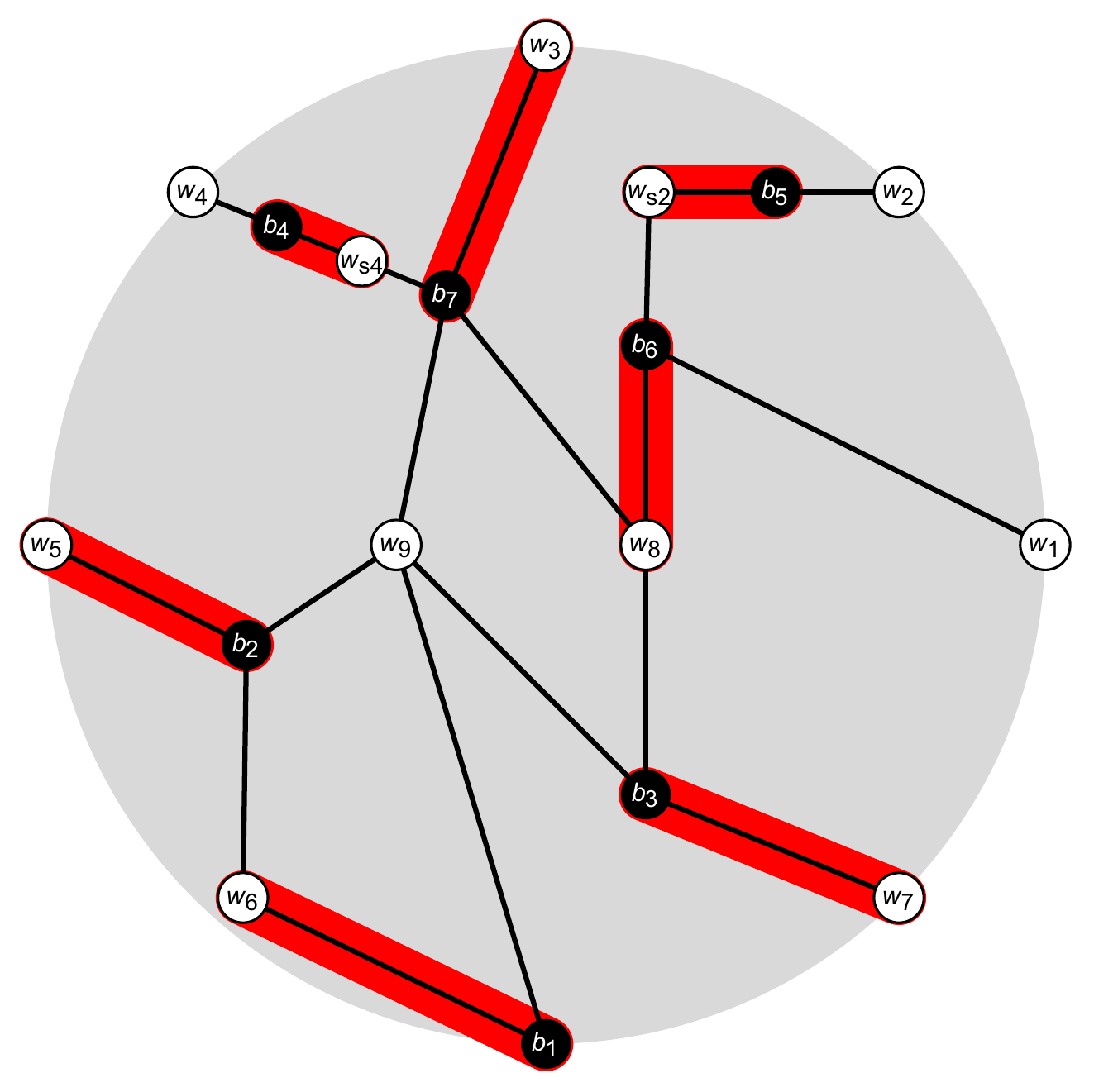}\hskip.5cm\includegraphics[width=2.5in]{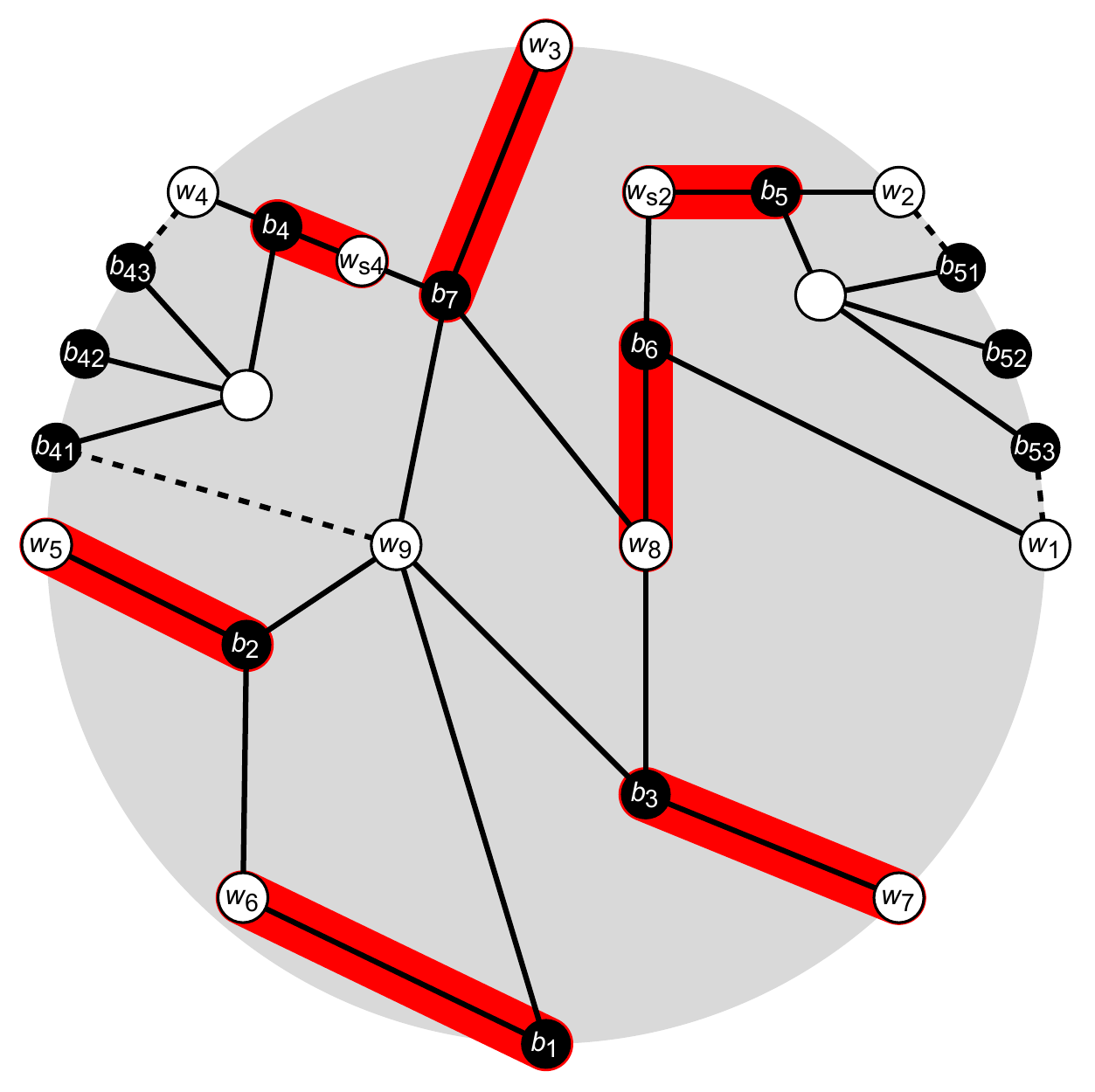}\end{center}
\caption{\label{augmented}We add spikes at $w_2,w_4$ splitting them into segments $w_2,w_{s2}$ and $w_4,w_{s4}$.
We then add gadgets to $B_{int}^*=\{b_4,b_5\}$ as shown. Dotted edges are the edges of weight $\eps$.}
\end{figure}

This augmented graph $\tilde\G$ has a dimer cover (extending the one of the figure to each gadget in a unique way).
Moreover in the limit $\eps\to 0$ the $X$ matrix $\tilde X$ for $\tilde\G$, which is square, is related to the $X$ matrix of $\G$:
For each $b\in B_{int}^*$ we replace the corresponding column of $X$ with three consecutive columns, labelled $b_1,b_2,b_3$,
each equal (in the limit $\eps\to0$) to the initial column, to get $\tilde X$.

We can now proceed as in the case $|W_\partial|=|B_{\partial}|$ above. 
Let $c$ be a coloring of the nodes of $\G$. We extend this to a coloring
$\tilde c$ of $\tilde\G$ by coloring each triple $b_1,b_2,b_3$ by colors $1,2,3$ in that order. From (\ref{XTr}), 
\be\label{Xtilde}\eps_K\frac{Z(\tilde c)}{\Delta^3} = \sum_{\rho\in\tilde\Pi} \tilde X_\rho \Tr_\rho(\tilde c).\ee
Here $\tilde \Pi$ consists of node pairings of the augmented set of nodes.
We group this sum into terms arising from each $\tau\in \Pi$: for each $\tau\in\Pi$ and bijection $\alpha\in\Sigma$ there is unique 
associated pairing $\rho\in\tilde\Pi$ with nonzero trace 
(since for each triple the white node of color $1$ must go to $b_1$, the white node of color $2$ must go to $b_2$,
and the white node of color $3$ must go to $b_3$). The sum in (\ref{Xtilde})  is
$$=\sum_{\tau\in\Pi} \sum_{\alpha\in\Sigma} \tilde X_\rho \Tr_\rho(\tilde c)$$
where $\rho=\rho(\tau,\alpha)$ is the associated pairing.

Recall that $X_\tau=\sum_\alpha \tilde X_{\pi}$ (see (\ref{Xtaudef}) where
$\pi$ pairs each triple $w_1,w_2,w_3$ with $b_1,b_2,b_3$ in the natural order. Let us compare $\tilde X_\rho$ with $\tilde X_\pi$: in $\tilde X_\rho$, the $w_1,w_2,w_3$ are paired with the $b_1,b_2,b_3$ so as to match the colors $\tilde c$;
the node among $w_1,w_2,w_3$ of color $1$ is paired with $b_1$ and so on. Thus
$\tilde X_\rho=\pm\tilde X_\pi$ where the sign is $+$ if and only if the permutation from $w_1,w_2,w_3$ to $b_1,b_2,b_3$ is even,
that is, if nodes $w_1,w_2,w_3$ are colored in counterclockwise order.
Note also that this is the sign of the contribution of this triple to $\Tr_\tau(c)$.
Thus $$\sum_\alpha \tilde X_{\rho}\Tr_\rho(\tilde c) = \sum_\alpha X_\pi \Tr_\tau(c)=X_\tau\Tr_\tau(c).$$
So the sum is 
$$\eps_K\frac{Z(c)}{\Delta^3} =\eps_K\frac{Z(\tilde c)}{\Delta^3}=\sum_{\tau\in\Pi} \sum_{\alpha\in\Sigma} \tilde X_\rho \Tr_\rho(\tilde c)=\sum_{\tau\in\Pi} X_\tau \Tr_\tau(c).$$
\end{proof}

\section{Web pairing matrices}

The reduction matrix $\P$ is related to two other matrices:
$$\M\P= \E$$ 
where $\M$ is the ``web pairing matrix'' and $\E$ is the ``extended web pairing matrix'',
both of which we now define.
Rows and columns of $\M$ are indexed by reduced webs and $\M_{r_1,r_2}$ is the trace
of the web drawn on the sphere with $r_1$ inside the disk and $r_2$ outside the disk (reflected in the bounding circle
from the inside to the outside the disk, so that the nodes of $r_2$ correspond to the nodes of $r_1$ in the same cyclic order).
In the $(w,b,w,b,w,b)$ example of Section \ref{wbwbwb} we have
$$\M = \begin{pmatrix}
27 & 9 & 9 & 3 & 9 & 24 \\
 9 & 27 & 3 & 9 & 3 & 24 \\
 9 & 3 & 27 & 9 & 3 & 24 \\
 3 & 9 & 9 & 27 & 9 & 24 \\
 9 & 3 & 3 & 9 & 27 & 24 \\
 24 & 24 & 24 & 24 & 24 & 72
 \end{pmatrix}.
$$

Rows of $\E$ are indexed by reduced webs and columns are indexed by $3$-partitions $\Pi$.
The entry $\E_{r,\pi}$ is the trace of the web drawn on the sphere with $r$ inside the disk and $\pi$
outside the disk, reflected as above. Since $\pi$ is not necessarily planar, the trace $\E_{r,\pi}$ may be negative or zero.
In the $(w,b,w,b,w,b)$-example we have
$$\E = \begin{pmatrix}
27 & 9 & 9 & 3 & 9 & 3 \\
 9 & 27 & 3 & 9 & 3 & 9 \\
 9 & 3 & 27 & 9 & 3 & 9 \\
 3 & 9 & 9 & 27 & 9 & 3 \\
 9 & 3 & 3 & 9 & 27 & 9 \\
 24 & 24 & 24 & 24 & 24 & 0
 \end{pmatrix}.
$$

\begin{thm}
Let $\M$ be the web trace matrix and $\E$ the extended web trace matrix.
Then $\M\P = \E$.
\end{thm}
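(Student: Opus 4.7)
The plan is to use that the trace is invariant under skein relations and linear on formal linear combinations of webs. Both $\M_{r,\lambda}$ and $\E_{r,\pi}$ are defined as traces of closed sphere diagrams obtained by placing $r$ inside the disk and gluing $\lambda$ (respectively $\pi$) in the outer disk; the proof amounts to applying the skein reductions in the outer region and invoking linearity.

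Fix a reduced web $r$ and a $3$-partition $\pi$, and let $S(r,\pi)$ denote the closed sphere diagram used to define $\E_{r,\pi}$, so that $\E_{r,\pi}=\Tr_{S(r,\pi)}$. By construction the portion of $S(r,\pi)$ lying outside the bounding circle is a reflected copy of $\pi$. Applying the skein reductions locally to this outer portion, which we may do because skein relations preserve the trace regardless of where they are applied, we obtain the formal equality
\begin{equation*}
S(r,\pi)=\sum_{\lambda\in\Lambda}\P_{\lambda,\pi}\,S(r,\lambda)
\end{equation*}
in the skein module (with $r$ and the outer boundary data held fixed). Taking traces and using linearity yields
\begin{equation*}
\E_{r,\pi}=\Tr_{S(r,\pi)}=\sum_{\lambda\in\Lambda}\P_{\lambda,\pi}\Tr_{S(r,\lambda)}=\sum_{\lambda\in\Lambda}\M_{r,\lambda}\P_{\lambda,\pi}=(\M\P)_{r,\pi},
\end{equation*}
which is the desired identity.

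The one step requiring care is the assertion that reducing the reflected copy of $\pi$ in the outer disk produces the \emph{same} coefficients $\P_{\lambda,\pi}$ as $\pi$ itself in the inner disk. This reduces to the observation that the local moves of Figures \ref{skeinrels} and \ref{skeinrel1} are combinatorial identities in the skein module that do not depend on whether the diagram is drawn inside or outside the bounding circle: the Sikora sign at each trivalent vertex is determined by vertex type and by the cyclic order of its incident edges, and both are preserved by reflection through the boundary circle (the cyclic order of boundary nodes is preserved by construction, as the definition of $\M_{r_1,r_2}$ makes explicit). Once this bookkeeping is verified, the identity $\M\P=\E$ follows immediately from the skein invariance of the trace, which was established earlier in the paper.
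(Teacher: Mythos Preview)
Your argument is essentially the paper's own proof: fix a reduced web and a $3$-partition, form the sphere diagram, apply skein relations in the outer hemisphere to expand the $3$-partition as $\sum_{\lambda}\P_{\lambda,\pi}\lambda$, and use linearity of the trace to conclude. The paper's version is slightly terser and does not pause over the reflection issue you raise in your final paragraph; your added remark that the skein coefficients in the outer disk agree with those in the inner disk is a reasonable point to make explicit, though your justification (``cyclic order is preserved by reflection'') would be cleaner if phrased in terms of the sphere's orientation rather than the planar reflection.
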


\begin{proof}
Let $\lambda\in\Lambda$ and $\tau\in\Pi$. By definition $\E_{\lambda,\tau}$ is the trace of a graph drawn on the sphere,
which is $\lambda$ in the lower half sphere and $\tau$ in the upper half sphere, reflected as mentioned above.
We use skein relations in the upper half sphere to replace $\tau$ with a linear combination of planar webs,
$\tau = \sum_{\lambda'\in\Lambda}\P_{\lambda',\tau}r'$, and then apply the trace after extending by $\lambda$ in the lower half sphere, 
getting 
$$\E_{\lambda,\tau} = \sum_{\lambda'\in\Lambda} \M_{\lambda,\lambda'}\P_{\lambda',\tau}$$
as desired.
\end{proof}

We conjecture that $\M$ is invertible, in which case $\P=\M^{-1}\E$.
The matrices $\M,\E$ are the $\SL_3$ analogs of the $q=2$ ``meander matrix" $M$ and ``extended meander matrix $E$, which are relevant for $\SL_2$ connections, see \cite{KW11}. 
The meander matrix $M$ is indexed with planar pairings rather
than planar reduced webs, but otherwise has a similar definition: $M_{\pi,\pi'}$ is the trace of the $2$-web on the sphere formed from $\pi$
in the upper hemisphere and $\pi'$ in the lower hemisphere. Likewise for $E$. The trace in this case is just $2^c$ where $c$ is the number of components of the $2$-web.
It is shown in 
\cite{KS} that $M$ is invertible. In \cite {diFr}, di Francesco discusses a generalization of the meander matrix, called the $\text{SU}(n)$ meander matrix, and shows it is invertible for each $n$. We don't know if it is related to our matrix.

\section{Parallel crossings}\label{LGVscn}

The Lindstr\o{}m-Gessel-Viennot Theorem (see e.g. \cite{StanleyEC2}) is a determinantal formula for the number of pairwise disjoint monotone lattice paths in a planar lattice (with appropriate boundary conditions). It has a number of 
generalizations, notably by Fomin \cite{FominLERW} to the case of spanning forests
(or loop-erased walks), and by Kenyon and Wilson \cite{KW11} to the case of double dimer paths.

We give a generalization here for 
reduced webs consisting of parallel crossings. Remarkably our formula involves a product of two determinants,
rather than a single determinant.

Let $\G$ be a circular planar graph, with $2n$ nodes $v_1,\dots,v_{2n}$.
Suppose for each $i$ that $v_i$ and $v_{2n+1-i}$ have opposite type: one is white and the other black.
Consider the reduced web $\lambda_{\tiny{\|}}$ which pairs $i$ to $2n+1-i$, that is, represents the 
``parallel" crossing of a rectangle (see Figure \ref{parallel}).

\begin{figure}[htbp]
\begin{center}\includegraphics[width=3in]{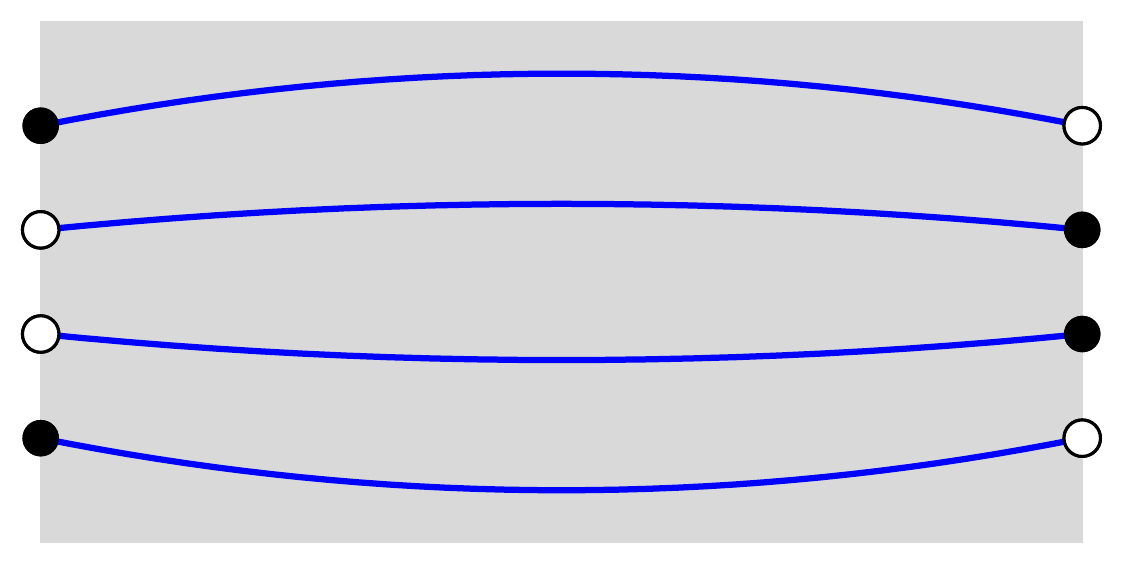}\end{center}
\caption{\label{parallel}The ``parallel crossing" reduced web.}
\end{figure}

Let $W_1,W_2$ be the set of indices of white nodes on the left and right, respectively, and likewise
$B_1,B_2$ the set of black nodes on the left and right. Let $X_{W_1}^{B_2}$ be the submatrix of $X$ with 
rows indexing $W_1$ and columns indexing $B_2$.

\begin{thm}\label{LGV}
We have $P_{\lambda_{\tiny{\|}}} = \det X_{W_1}^{B_2}\det X_{W_2}^{B_1}.$
\end{thm}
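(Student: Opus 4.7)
The plan is to apply Theorem \ref{main} and identify which pairings $\tau\in\Pi$ contribute to $P_{\lambda_{\|}}$. Since $|\bW|=|\bB|=n$ there are no triples, so each $\tau\in\Pi$ is a bijection $\sigma\colon\bW\to\bB$ with $X_\sigma=(-1)^\sigma\prod_{w}X_{w,\sigma(w)}$, and Theorem \ref{main} yields
\[
P_{\lambda_{\|}} \;=\; \eps_K\sum_{\sigma\in\Pi}\P_{\lambda_{\|},\sigma}\,X_\sigma.
\]
Expanding the target,
\[
\det X_{W_1}^{B_2}\det X_{W_2}^{B_1}=\sum_{\sigma_1\colon W_1\to B_2}\sum_{\sigma_2\colon W_2\to B_1}(-1)^{\sigma_1+\sigma_2}\prod_{w\in W_1}X_{w,\sigma_1(w)}\prod_{w\in W_2}X_{w,\sigma_2(w)},
\]
is a sum over the subset $\Pi_0\subset\Pi$ of pairings of the form $\sigma=\sigma_1\sqcup\sigma_2$. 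The theorem therefore reduces to: (i) $\P_{\lambda_{\|},\sigma}=0$ for $\sigma\notin\Pi_0$; and (ii) $\eps_K\,\P_{\lambda_{\|},\sigma}\,(-1)^\sigma=(-1)^{\sigma_1+\sigma_2}$ for $\sigma\in\Pi_0$.

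For (i), draw $\sigma$ as a straight-chord diagram and reduce it via the basic skein relation (\ref{skeinrel1}) together with the planar moves of Figure \ref{skeinrels}. The topological point is that neither a crossing resolution nor a bigon/square reduction severs the 12-path between the boundary endpoints of any original chord: in every planar web appearing in the expansion of $\sigma$, the endpoints of each chord remain joined by a 12-path (this is consistent with the Reidemeister-type invariances observed in the paragraph following Figure \ref{skeinrel1}). Hence any chord of $\sigma$ joining two left nodes (or two right nodes) forces every reduced web in the expansion to contain a 12-path with the same property. Since $\lambda_{\|}$ has no left-to-left or right-to-right 12-paths, $\P_{\lambda_{\|},\sigma}=0$ for $\sigma\notin\Pi_0$.

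For (ii), given $\sigma=\sigma_1\sqcup\sigma_2\in\Pi_0$, I use the Reidemeister-type invariances to isotope $\sigma$ into the composition of $\lambda_{\|}$ with a permutation braid on the left realizing $\sigma_1$ and one on the right realizing $\sigma_2$. Resolving each braid crossing via (\ref{skeinrel1}), only the ``uncrossing'' term is compatible with the $\lambda_{\|}$ connection pattern -- the other resolution introduces a pair of trivalent vertices producing a left-to-left or right-to-right 12-path, which is ruled out by the argument above. This gives $\P_{\lambda_{\|},\sigma}=\eps^{N(\sigma)}$, where $\eps=\pm1$ is the uniform skein sign per crossing and $N(\sigma)$ counts braid crossings, with $N(\sigma)\equiv\sigma_1+\sigma_2\pmod 2$ in any minimal braid presentation.

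The main obstacle is the sign bookkeeping, in the spirit of Lemma \ref{signs}. Combining $\eps_K$, the skein sign $\eps^{N(\sigma)}$, and the sign $(-1)^\sigma$ inside $X_\sigma$, one verifies that the $\sigma$-dependent factors collapse to $(-1)^{\sigma_1+\sigma_2}$: the $\sigma$-dependence is captured by the parity identity $N(\sigma)\equiv\sigma_1+\sigma_2\pmod 2$, and the remaining universal sign is pinned down by comparing to a base case such as the $(w,b,w,b)$ example of Section 3, where $\lambda_{\|}$ equals $\lambda_2$ of Figure \ref{Rt41} and $P_{\lambda_2}=-X_{1,2}X_{2,1}$ from (\ref{4n2}) matches $\det X_{W_1}^{B_2}\det X_{W_2}^{B_1}$ up to the overall Kasteleyn sign.
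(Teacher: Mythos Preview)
Your overall strategy matches the paper's: apply Theorem~\ref{main}, show $\P_{\lambda_{\|},\sigma}=0$ whenever $\sigma$ pairs two nodes on the same side, and show $\P_{\lambda_{\|},\sigma}=1$ otherwise. The problem is the mechanism you invoke for step~(i).

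Your claim that ``neither a crossing resolution nor a bigon/square reduction severs the 12-path between the boundary endpoints of any original chord'' is false. The basic skein relation of Figure~\ref{skeinrel1} replaces a crossing by an uncrossing \emph{minus} a double-$Y$. In the uncrossing term the two strands are rerouted: if the chords $w\!-\!b$ and $w'\!-\!b'$ crossed, the uncrossing pairs $w$ with $b'$ and $w'$ with $b$, so $w$ and $b$ are no longer in the same component. Thus a same-side chord $w\!-\!b$ does \emph{not} force $w$ and $b$ to remain connected in every reduced web of the expansion, and your argument for~(i) collapses. (Concretely: take $p=(w,w,b,b)$, $\tau$ the unique crossing pairing; its reduction contains $\lambda_{\|}$ with coefficient~$+1$ even though both chords are ``left--right'' after a rotation---the point is that connectivity of chord endpoints is simply not a skein invariant.) The paper proves~(i) by a genuinely different and more delicate argument: assume a minimal same-side pair $w\!-\!b$, comb the diagram so there are no crossings between the strand $wb$ and the boundary, and then analyze the first one or two crossings along $wb$ case by case, showing each resolution either produces a persistent local feature (an adjacent pair, or a tripod at the boundary) incompatible with $\lambda_{\|}$, or reduces to a shorter same-side pair handled by induction.

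Your step~(ii) inherits the same gap, since you rule out the double-$Y$ resolution by appealing to~(i). The paper instead observes that when two crossing strands have left endpoints of the \emph{same} type (both white or both black), the double-$Y$ term produces two same-type boundary nodes joined to an internal vertex of the opposite type; it then runs a second induction (on the distance between those two nodes) to show this configuration contributes~$0$. After all such crossings are resolved to parallel strands one obtains $\lambda_{\|}$ with coefficient~$+1$, so $\P_{\lambda_{\|},\sigma}=1$ for every $\sigma\in\Pi_0$. Your sign discussion is then essentially correct in spirit (the uncrossing carries sign~$+1$ in the Sikora convention, and the identity $(-1)^\sigma=\pm(-1)^{\sigma_1+\sigma_2}$ holds up to a global sign absorbed in $\eps_K$), but it is not a substitute for the missing topological argument.
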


For example, see Figure \ref{Rt42}, second panel, for which (after a rotation) $W_1=\{1,2\}$ and $B_1=\emptyset$ and
whose polynomial is $\det X$.
See also the third configuration of Figure \ref{Rt61}, whose polynomial is $X_{2,3}\begin{vmatrix}
X_{1,1}&X_{1,2}\\X_{3,1}&X_{3,2}\end{vmatrix}$
(we changed indices to match those of the figure.)

If $c\equiv 1$, the probability of the parallel crossing, for the natural measure on reduced webs, 
is $Pr(\lambda_{\tiny{\|}}) = \frac{\det X_{W_1}^{B_2}\det X_{W_2}^{B_1}}{\det X}.$

\begin{proof}[Proof of Theorem \ref{LGV}]
By Theorem \ref{main} we need to compute $\P_{\lambda,\tau}$ for $\lambda=\lambda_{\tiny{\|}}$ and any pairing $\tau$. 
For clarity draw $\G$ in a rectangle so that nodes $1,\dots,n$ are on the left side and nodes $n+1,\dots,2n$ are on the right side.

Let $\tau\in\Pi$ be a pairing of nodes. 
From the definition of $\P$, we need to reduce $\tau$ to a linear combination of planar 
pairings using skein relations and see when the parallel crossing 
$\lambda$ occurs in this reduction.
First suppose that $\tau$ pairs two nodes $w,b$ which are both on the left side of the rectangle.
By induction on the distance from $w$ to $b$ we will show that no reduction of $\tau$ contains
$\lambda$, and thus $\P_{\lambda,\tau}=0$. Assume that no two nodes on the left side between $w$ and $b$ are paired:
there is no nested pairing inside $wb$.
If $w,b$ are adjacent, they never participate in any skein relation, so $w,b$ are paired in any planar reduction of $\tau$,
so $\P_{\lambda,\tau}=0$. If $w,b$ are not adjacent, we ``comb" the web $\tau$, isotoping it 
so that there are no crossings of pairs in $\tau$ between the strand $wb$ and the left boundary, as shown in Figure \ref{combed}.  That is, move all crossings enclosed between the strand $wb$ and the left boundary across the strand $wb$.
\begin{figure}[htbp]
\begin{center}\includegraphics[width=3in]{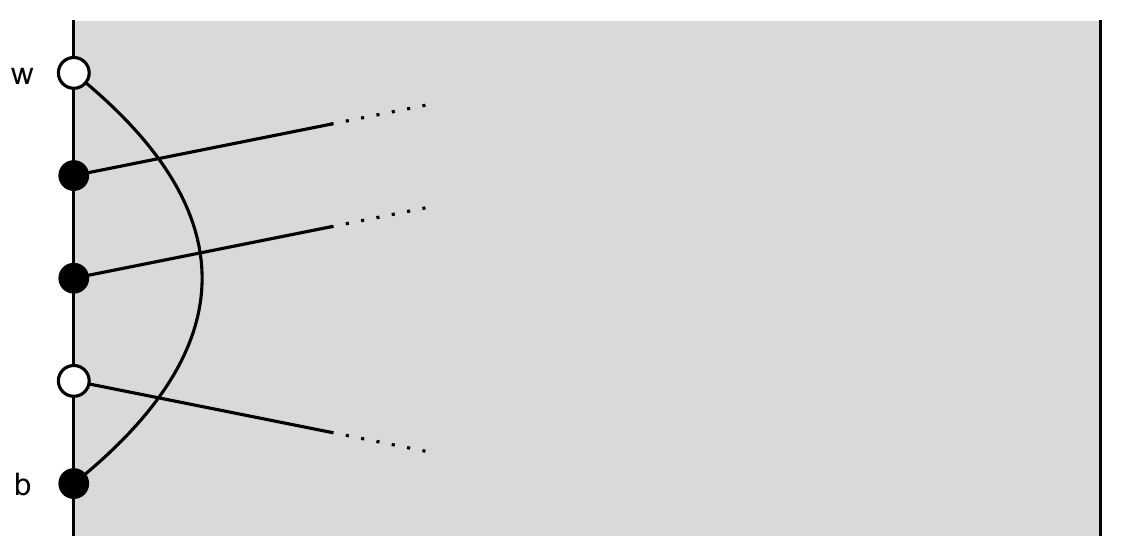}\end{center}
\caption{\label{combed}Draw the web so that there are no crossings of strands between the left boundary and the strand $wb$.}
\end{figure}

Suppose $w$ lies above $b$ on the left boundary. If the first node below $w$ is also white, say $w'$,
then applying a skein relation to the local crossing between $w$ and $w'$ results in two webs; the first connects $w'$ to $b$
and can be eliminated by induction. The second has a double-Y connecting $w,w'$ to an inner black vertex $b'$.
These three vertices persist under any further skein reductions, showing that $\P_{\lambda,\tau}=0$. 
We can therefore assume the first node below $w$ is black, say $b_1$. 
There are now three cases: the next node below $b_1$ is $b$, or another black node $b_2$, or a white node $w_1$.
If the next node below $b_1$ is $b$, then apply the skein relation to the intersection of the strand $wb$ with the strand from $b_1$; neither resulting web contributes to $\P$ as above.
Suppose the next node below $b_1$ is a black node $b_2\ne b$; the white case is similar. 
Let $x_1,x_2$ denote the crossings between $wb$ and the strand starting at $b_1$, and between $wb$ and the strand starting at $b_2$.
Apply a skein relation to the crossing $x_1$.
Replacing $x_1$ with a parallel crossing produces an adjacent pair which persists under all further reductions as above.
So we must replace the crossing $x_1$ with a double-Y. Now apply the skein relation to the crossing $x_2$. 
Replacing it with a parallel crossing results in $b_1,b_2$ being connected to an internal vertex $w'$ and these three persist
for all future reductions. So we must replace $x_2$ with a double-Y to get a nonzero contribution. The edge between these
two double-Ys (the thick blue edge in Figure \ref{combedthick})
will persist under all further reductions unless it is, at some point, part of a quad move (Figure \ref{skeinrels}, last line) involving the face immediately to its right.
\begin{figure}[htbp]
\begin{center}\includegraphics[width=3in]{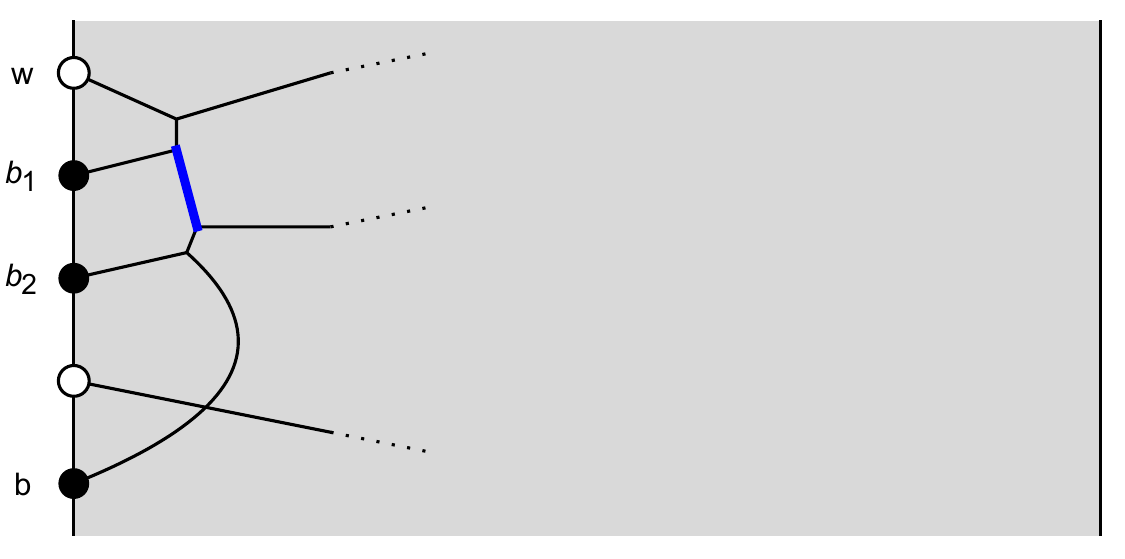}\end{center}
\caption{\label{combedthick}Resolving the crossings at $x_1,x_2$.}
\end{figure}

However the two possible resolutions of this quad result, in one case in an adjacent pair $wb_1$, 
and in the other case a pair of adjacent black nodes $b_1b_2$
connected to an internal white vertex; both of these cases we can eliminate as above.
This completes the proof that $\P_{\lambda,\tau}=0$ when $\tau$ pairs two nodes $w,b$ both on the same side. 

Now suppose that $\tau$ only pairs nodes
from opposite sides. We claim that $\P_{\lambda,\tau}=1$.
If $\tau$ is planar (has no crossings), it must be $\lambda$. 
Consider the pairs in $\tau$ with white nodes on the left; if none of these cross each other, and the same is true for the pairs with black nodes on the left,
then $\tau$ is planar. So if $\tau$ is nonplanar it must have two pairs which cross, and whose left nodes have the same type (either both black or both white), say white. 
In this case we can use the skein relation to replace this crossing with a parallel uncrossing, and another web which connects
two white vertices $w_1,w_2$ on the left to a black vertex $b$ in the interior. We claim by induction on the distance between $w_1,w_2$ that such a web contributes zero. 
If $w_1,w_2$ are adjacent, see as above. If not, comb so that there are no crossings of other strands in the region between $w_1,b,w_2$ and the left boundary. Further assume that strands for all nodes between $w_1,w_2$ exit this region between $w_1$ and $b$, that is, cross $w_1b$. Then we can argue exactly as in the previous case of a pairing between a black and white vertex on the left by making the strand $w_2b$ very small $b$ acts as if it is on the left boundary.

Consequently every crossing between pairs with the same type on the left must resolve to a \emph{parallel uncrossing},
in order to contribute nontrivially to $P_{\lambda,\tau}$. 
Resolving all such pairs results in the web $\lambda$, with a sign $+$. Therefore $\P_{\lambda,\tau}=1$ for all pairings
connecting nodes on opposite sides. 
Therefore $\sum_{\tau}\P_{\lambda,\tau}X_\tau= \sum_{\tau\in\Pi'}X_\tau$ where $\Pi'$ is the set of $\tau$'s
pairing nodes on opposite sides. Using the definition of $X_\tau$ from (\ref{Xtaupair}) this completes the proof.
\end{proof}

The above proof 
generalizes (thanks to Pasha Pylyavskyy for suggesting this) to crossings with ``crossbars", as illustrated in Figure \ref{crossinggeneral}:
take $n$ parallel strands as in Figure \ref{parallel}, and draw any nonnegative number of crossbars between adjacent strands
with the proviso that the crossbars be \emph{interlaced}: between two crossbars connecting strands $i$ and $i+1$ there is exactly one crossbar from $i-1$ to $i$ and one from $i+1$ to $i+2$. 
Note that this produces a reduced web, and the vertex types are determined on each connected component up to a global type flip.
 \begin{figure}[htbp]
\begin{center}\includegraphics[width=3in]{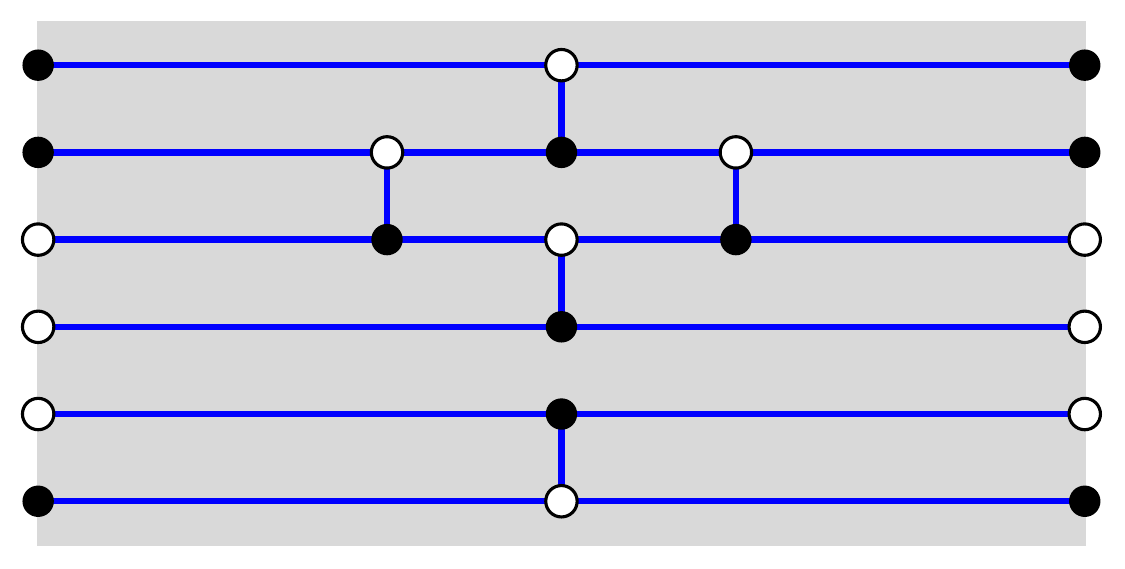}\end{center}
\caption{\label{crossinggeneral}A crossing with crossbars (a crossbar is a vertical segment joining two horizontal lines).}
\end{figure}
As above let $W_1,W_2$ be the white nodes on the left and right and $B_1,B_2$ the black nodes on the left and right.

\begin{thm}For a reduced web $\lambda$ which is a crossing with crossbars as defined above, $P_\lambda=\det X_{W_1}^{B_2}\det X_{W_2}^{B_1}.$
\end{thm}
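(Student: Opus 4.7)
The plan is to mimic the proof of Theorem \ref{LGV} and to invoke Theorem \ref{main}: it suffices to show that the reduction coefficients satisfy $\P_{\lambda,\tau}=0$ for any pairing $\tau$ containing a same-side pair, and $\P_{\lambda,\tau}=1$ for any pairing $\tau$ that matches each node on the left to a node on the right. Granting this, Theorem \ref{main} immediately gives
\[
P_\lambda = \eps_K\sum_{\tau\in\Pi'}X_\tau = \det X_{W_1}^{B_2}\det X_{W_2}^{B_1},
\]
where $\Pi'$ denotes the set of opposite-side pairings, by exactly the same manipulation of (\ref{Xtaupair}) used at the end of the proof of Theorem \ref{LGV}.

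For the vanishing statement, the combing argument of Theorem \ref{LGV} applies with essentially no change. Any pair $(w,b)$ on the same side produces, after isotoping $\tau$ so no other crossings lie between this strand and the boundary and then iteratively resolving crossings, a persistent configuration whose only nontrivial resolution attaches two same-side nodes to a common trivalent vertex. Such a configuration survives in any further skein reduction. Since every connected component of the crossbar web $\lambda$ has its two boundary vertices on opposite sides, $\lambda$ contains no such structure, and hence $\P_{\lambda,\tau}=0$.

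The content lies in the opposite-side case. After isotoping $\tau$ so that its crossings all lie in the strip occupied by $\lambda$ and its crossbars, we classify the resolutions. At each crossing, the basic skein relation offers the parallel uncrossing or the double-Y; the double-Y can be thought of as introducing a crossbar between two adjacent strands. The interlacing condition is designed precisely so that, strand by strand, there is exactly one admissible sequence of choices whose product gives the reduced web $\lambda$: choose the double-Y at each crossing lying at the position of a crossbar of $\lambda$, and the parallel uncrossing at every other crossing. Any deviation produces either a different reduced web (a different crossbar pattern, which then contributes to a different $P_{\lambda'}$) or a persistent same-side structure ruled out by the vanishing argument above. The interlacing hypothesis is what prevents the crossbar-producing double-Y resolutions from creating new bigons or quads that would collapse (by the planar skein relations of Figure \ref{skeinrels}) to a web other than $\lambda$.

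The main obstacle, as in Theorem \ref{LGV}, is the sign bookkeeping: one must verify that the unique skein sequence producing $\lambda$ contributes with coefficient $+1$, and that no partial cancellation occurs among the other resolutions. A cleaner alternative is to induct on the number of crossbars of $\lambda$: strip off an outermost crossbar via a local skein identity that relates $\P_{\lambda,\tau}$ to $\P_{\lambda^-,\tau}$ where $\lambda^-$ has one fewer crossbar, reducing ultimately to the base case $\lambda=\lambda_{\tiny{\|}}$ already established in Theorem \ref{LGV}. Either way, the key technical step is the local analysis near each crossbar, organized so that exactly the desired $+1$ coefficient survives.
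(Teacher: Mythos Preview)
Your overall framework matches the paper's: reduce via Theorem \ref{main} to computing $\P_{\lambda,\tau}$, dispose of same-side pairings by the combing argument of Theorem \ref{LGV}, and then show $\P_{\lambda,\tau}=1$ for every opposite-side pairing. The first part is fine. The gap is in your treatment of the opposite-side case.

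You propose to resolve ``the crossing lying at the position of a crossbar of $\lambda$'' into a double-Y and all others into parallel uncrossings. But for an arbitrary opposite-side pairing $\tau$, the crossings have no canonical correspondence with the crossbars of $\lambda$; different drawings of $\tau$ have different crossings, and there is no well-defined notion of a crossing ``at the position of a crossbar''. Your assertion that any deviation yields either a different reduced web or a same-side structure is exactly what needs to be proved, and you have not supplied the argument. The suggested induction on the number of crossbars is likewise only a sketch: you do not exhibit the local skein identity relating $\P_{\lambda,\tau}$ to $\P_{\lambda^-,\tau}$, nor explain why the set of contributing $\tau$'s is unchanged.

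The paper's organizing principle, which you are missing, is to classify crossings by the \emph{types} of the left endpoints of the two strands. If both left endpoints are white (or both black), the double-Y resolution connects two same-side nodes to an internal vertex, which persists and kills the contribution; so these crossings must resolve to parallel uncrossings. After doing all of these, one arrives at the \emph{unique} opposite-side pairing in which same-left-type strands are pairwise disjoint. Every remaining crossing is between strands of opposite left type; here the parallel resolution would create a same-side pair, so each must resolve to a double-Y. These forced double-Y's are exactly the crossbars, and the result is $\lambda$ on the nose with coefficient $+1$. This type-based dichotomy is what makes the argument go through without any positional matching or sign bookkeeping.
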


\begin{proof}
The first part of the proof, showing that $\tau$ cannot pair two nodes on the same side, is identical to the first part of the
proof of Theorem \ref{LGV} above.
So we may suppose that $\tau$ only pairs nodes on opposite sides. In this case we claim that $\P_{\lambda,\tau}=1$. 

Consider a pair of crossing strands of $\tau$ with black nodes on the left and white on the right, or vice versa.
Applying the skein relation, turning a crossing into a double-Y and a parallel uncrossing, the double-Y cannot contribute to $\P_{\lambda,\tau}$ as discussed in the previous proof.
So we must resolve this crossing into two parallel strands (note that this does not introduce any sign change).
We resolve all crossings between strands with the both left nodes black or both left nodes white.
 
We can now suppose that all strands of $\tau$ with white nodes on the left are disjoint, and all strands
of $\tau$ with black nodes on the left are disjoint. Note that there is a unique such pairing $\tau$.
Each crossing in $\tau$ involves two strands with different types on the left.
The resolution of this crossing must give a double-Y, since otherwise there would be a strand connecting two vertices on the left. Resolving all crossings this way gives $\lambda$ exactly.

In conclusion the pairings $\tau$ that contribute to $\P_{\lambda,\tau}$ are exactly 
those that pair nodes on opposite sides, and the contribution is always $+1$. 
In particular 
$\sum_{\tau} \P_{\lambda,\tau}X_{\tau}$ is the form in the statement.
\end{proof}

\section{Application: triangular honeycombs}\label{honeycomb}

As another application of Theorem \ref{main}, 
we can compute the $P$-polynomial (and probability) for the order-$n$ 
triangular honeycomb web $T_n$ of Figure \ref{bigweb3} and the order-$n$ honeycomb $T'_n$ of Figure \ref{bigweb4prime}. In the case of $T_n$ and Figure \ref{bigweb3}
we used a different convention for the black nodes: ordering them counterclockwise. This
makes for a slightly less messy indexing in the statement of Theorem \ref{PTn1}.

\subsection{The triangular honeycomb $T_n$} \label{Tn}

\begin{thm}\label{PTn1}
We have 
$$P_{T_n} = \det(\{X_{i,j+n}\}_{i,j=1..n})\det(\{X_{i+n,j+2n}\}_{i,j=1..n})\det(\{X_{i+2n,j}\}_{i,j=1..n}).$$
\end{thm}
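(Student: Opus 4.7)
The plan is to apply Theorem \ref{main}, which writes $P_{T_n} = \eps_K\sum_{\tau\in\Pi}\P_{T_n,\tau}\,X_\tau$. Since $|\bW|=|\bB|=3n$ for $T_n$, the set $\Pi$ consists just of pairings of the $6n$ nodes. In the indexing used here (whites and blacks both counterclockwise), let $W^j=\{w_{(j-1)n+1},\ldots,w_{jn}\}$ and $B^j=\{b_{(j-1)n+1},\ldots,b_{jn}\}$ for $j=1,2,3$; geometrically these are the three sides of the triangle, and the strands of $T_n$ cyclically join $W^1\!\to\! B^2$, $W^2\!\to\! B^3$, $W^3\!\to\! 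B^1$. I would prove the structural claim
\[
\P_{T_n,\tau} = \begin{cases}1 & \text{if $\tau$ bijects $W^1$ to $B^2$, $W^2$ to $B^3$, and $W^3$ to $B^1$,}\\ 0 & \text{otherwise,}\end{cases}
\]
and then repackage the resulting sum.

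The main obstacle is the vanishing half of this claim, which I would handle by extending the combing/induction argument from the proof of Theorem \ref{LGV}. Suppose $\tau$ pairs two nodes on the same side of the triangle, or pairs $W^i$ to $B^i$ or to $B^{i-1}$ (the wrong cyclic direction). Isotope the offending chord to lie against a short boundary arc and induct on the boundary distance between its endpoints. Every skein resolution of the crossings on that chord either (a) produces an adjacent like-type pair joined to an internal trivalent vertex, a feature that persists through all further reductions and is absent from $T_n$, or (b) exposes a bigon or quad face whose two resolutions each either recreate (a) or strictly shorten the offending chord. The three-side setting has more case work than the two-side setting of Theorem \ref{LGV}, but the structural dichotomy (persistent forbidden feature versus strict simplification) is identical.

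For a cyclic $\tau$, every crossing in the disk diagram is between two strands joining different consecutive pairs of sides. A double-Y resolution of any such crossing would connect two whites or two blacks of the same block through an interior vertex, which is exactly the forbidden configuration ruled out above. Hence each crossing must resolve to a parallel uncrossing; no signs are introduced, and the unique planar web that results is $T_n$, so $\P_{T_n,\tau}=1$.

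Finally, a cyclic pairing $\tau$ is determined by a triple $(\sigma_1,\sigma_2,\sigma_3)\in S_n^3$ via $w_i\mapsto b_{\sigma_1(i)+n}$, $w_{n+i}\mapsto b_{\sigma_2(i)+2n}$, $w_{2n+i}\mapsto b_{\sigma_3(i)}$. This decomposes as $\pi=\tau_0\circ(\sigma_1\oplus\sigma_2\oplus\sigma_3)$, where $\tau_0$ is the cyclic shift by $n$ on $\{1,\ldots,3n\}$; since $\tau_0$ factors into $n$ disjoint $3$-cycles, $\sgn(\tau_0)=+1$ and $\sgn(\pi)=\sgn(\sigma_1)\sgn(\sigma_2)\sgn(\sigma_3)$. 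Using the definition (\ref{Xtaupair}) of $X_\tau$, the sum then factors as
\[
\sum_{\tau\text{ cyclic}} X_\tau \;=\; \det\!\left(X_{i,\,j+n}\right)_{i,j=1}^n \cdot \det\!\left(X_{i+n,\,j+2n}\right)_{i,j=1}^n \cdot \det\!\left(X_{i+2n,\,j}\right)_{i,j=1}^n,
\]
which is the formula in the statement. Any residual global sign is absorbed into $\eps_K$, which is the reason the authors flag the counterclockwise-for-blacks convention here.
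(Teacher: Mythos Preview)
Your vanishing half is on the right track (the paper handles the ``same side'' and ``wrong cyclic direction'' cases by separate combing arguments, so your sketch is indeed hiding nontrivial case work), but the second half contains a genuine error.

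You claim that for a cyclic $\tau$ every crossing must resolve to a parallel uncrossing, because the double-Y ``would connect two whites or two blacks of the same block through an interior vertex.'' This is backwards. When a $W^1\!\to\! B^2$ strand crosses a $W^2\!\to\! B^3$ strand, the double-Y joins a white from $W^1$ and a white from $W^2$ (different blocks) to an interior black, and a black from $B^2$ and a black from $B^3$ (different blocks) to an interior white; nothing forbidden is produced. More to the point, if \emph{every} crossing resolved to a parallel smoothing you would obtain a planar pairing with no interior trivalent vertices at all, whereas $T_n$ is a honeycomb with on the order of $n^2$ interior vertices. Concretely, for $n=1$ the reduction table in the appendix gives $\P_{T_1,231}=-1$, not $+1$.

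The paper's argument is the opposite of yours at this step: first, crossings between two strands of the \emph{same} type $W^i\!\to\! B^{i+1}$ must resolve to parallel (here the double-Y would indeed join two same-block whites), collapsing any cyclic $\tau$ to the canonical $\tau_{\text{cross}}$; second, each of the $3n^2$ crossings remaining in $\tau_{\text{cross}}$ is between strands of \emph{different} types and must resolve to a double-Y (the parallel smoothing would produce a wrong-direction pair already ruled out), each contributing a factor $-1$. Hence $\P_{T_n,\tau}=(-1)^{3n^2}=(-1)^n$. Your computation $\sgn(\tau_0)=+1$ is fine, so your final sum is missing this $(-1)^n$; the clean determinantal formula in the statement comes out because of the counterclockwise black-node indexing the authors flag, not because $\P_{T_n,\tau}=1$.
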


For example for the order-$3$ example this is 
$$P_{T_3}  = \det\begin{vmatrix}X_{1,4} & X_{1,5} & X_{1,6} \\
 X_{2,4} & X_{2,5} & X_{2,6} \\
 X_{3,4} & X_{3,5} & X_{3,6} \\
 \end{vmatrix}
 \det\begin{vmatrix}X_{4,7} & X_{4,8} & X_{4,9} \\
 X_{5,7} & X_{5,8} & X_{5,9} \\
 X_{6,7} & X_{6,8} & X_{6,9} 
 \end{vmatrix}
  \det\begin{vmatrix}X_{7,1} & X_{7,2} & X_{7,3} \\
 X_{8,1} & X_{8,2} & X_{8,3} \\
 X_{9,1} & X_{9,2} & X_{9,3} 
 \end{vmatrix}.$$
 
 \begin{figure}[htbp]
\begin{center}\includegraphics[width=4in]{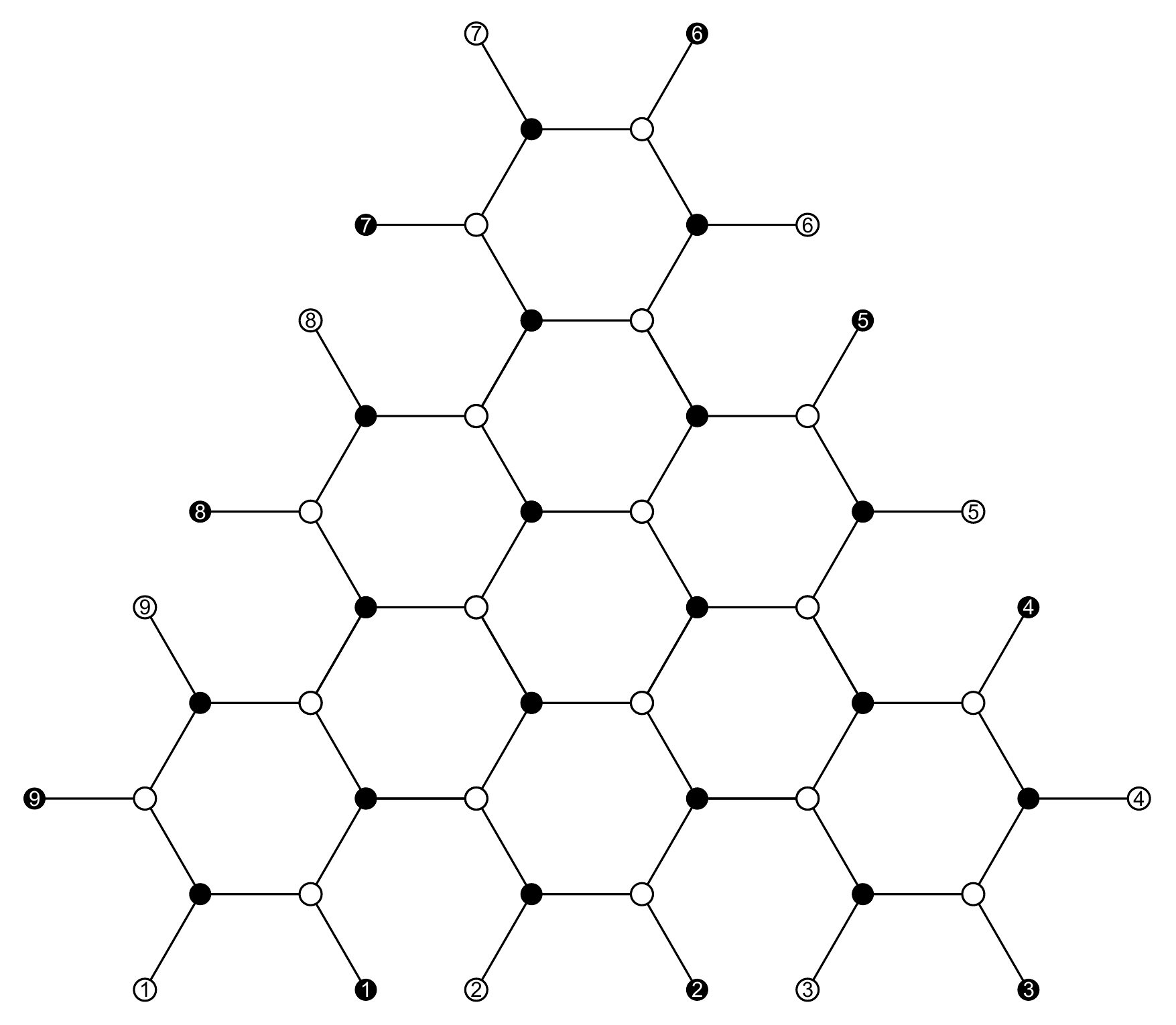}\end{center}
\caption{\label{bigweb3}Triangular honeycomb web $T_n$ of order $3$.}
\end{figure}

\begin{proof}[Proof of Theorem \ref{PTn1}]
Let $\lambda$ denote the reduced web $T_n$. 
Let $I_1,I_2,I_3$ be the three sides of $T_n$, with $I_1$ being the lower side as in the Figure:
$I_1$ consists of white and black nodes $\{1,\dots,n\}$, $I_2$ consists of white and black nodes
$\{n+1,\dots,2n\}$ and $I_3$ the remaining nodes. 

The proof is similar to the proof of Theorem \ref{LGV} above. 
Consider a pairing $\tau\in\Pi$. In order to have 
$\P_{\lambda,\tau}\ne0$, we claim that $\tau$ must pair all white nodes in $I_i$ to black nodes in $I_{i+1}$, with cyclic indices.
Suppose not; suppose first that $\tau$ pairs some white node $w\in I_1$ with a black node $b$ on the same side: $b\in I_1$.
In this case an induction argument identical to that in paragraphs 2 and 3 of the proof of Theorem \ref{LGV}
implies that $\P_{\lambda,\tau}=0$. Secondly suppose $\tau$ pairs a white node $w\in I_1$ with a black node $b\in I_3$.
Suppose $b$ is not the lower black vertex of $I_3$; let $w'$ the white node below it in $I_3$.
Comb strands left of this pair $bw$ as in Figure \ref{combed2}, left, so there are no crossings left of strand $wb$. \begin{figure}[htbp]
\begin{center}\includegraphics[width=1.5in]{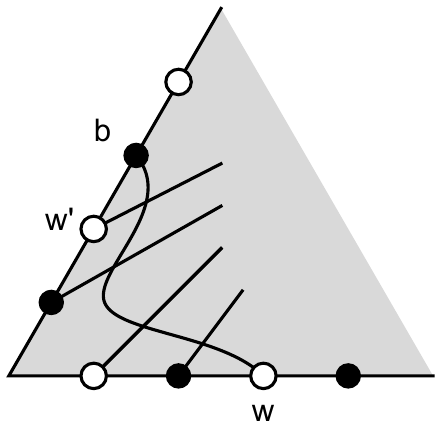}\hskip1cm\includegraphics[width=1.5in]{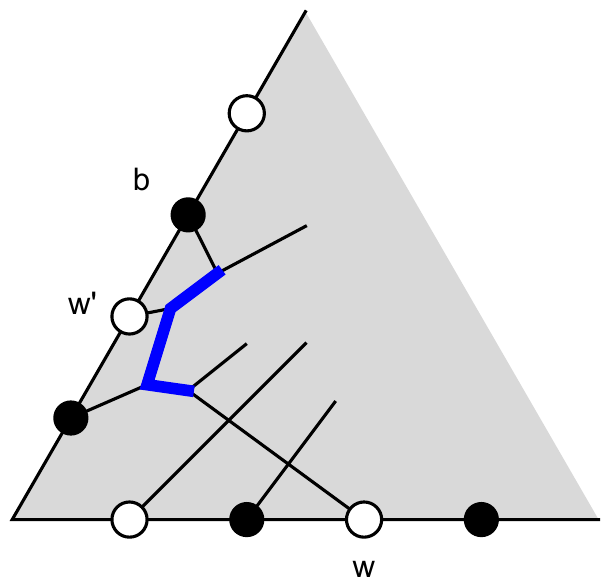}\end{center}
\caption{\label{combed2}Left panel: combing the strand from $w$ to $b$. Right panel: after applying the basic skein relation to the 
two crossings near $b$.}
\end{figure}
Apply skein relations to the strand crossings between $bw$ and the strand from $w'$ and the strand from the black vertex below it, as in the Figure, right panel. In order to obtain $\lambda$,
the thick edges of Figure \ref{combed2}, right, must be part of a quad face in some subsequent skein reduction (otherwise the outer face between $b$
and $w'$ would remain of degree $3$). Either reduction of this quad face results in a web which cannot be reduced to $\lambda$. If $b$ happens to be the lower black vertex of $I_3$, $w$ can not be the left-most white node of $I_1$ (otherwise
$\P_{\lambda,\tau}=0$) and 
then the same argument using the last two crossings near $w$ (with the roles of $b$ and $w$ switched) applies.

If for each $i$ every white node in $I_i$ connects to a black node in $I_{i+1}$, then we claim $\P_{\lambda,\tau}=(-1)^n$. 
As in the previous proof, consider first crossings between pairs connecting white nodes in $I_i$ to black nodes in $I_{i+1}$. 
Any such crossing must resolve into parallel strands. Resolving all these crossings
leads to the pairing $\tau_{\text{cross}}$ of Figure \ref{cross3}. Each crossing of $\tau_{\text{cross}}$ (of which there are $3n^2$)
must then resolve to a double-Y, and thus $\P_{\lambda,\tau}=\P_{\lambda,\tau_{\text{cross}}}=(-1)^{3n^2}=(-1)^n$.
This implies the statement.

\begin{figure}[htbp]
\begin{center}\includegraphics[width=3in]{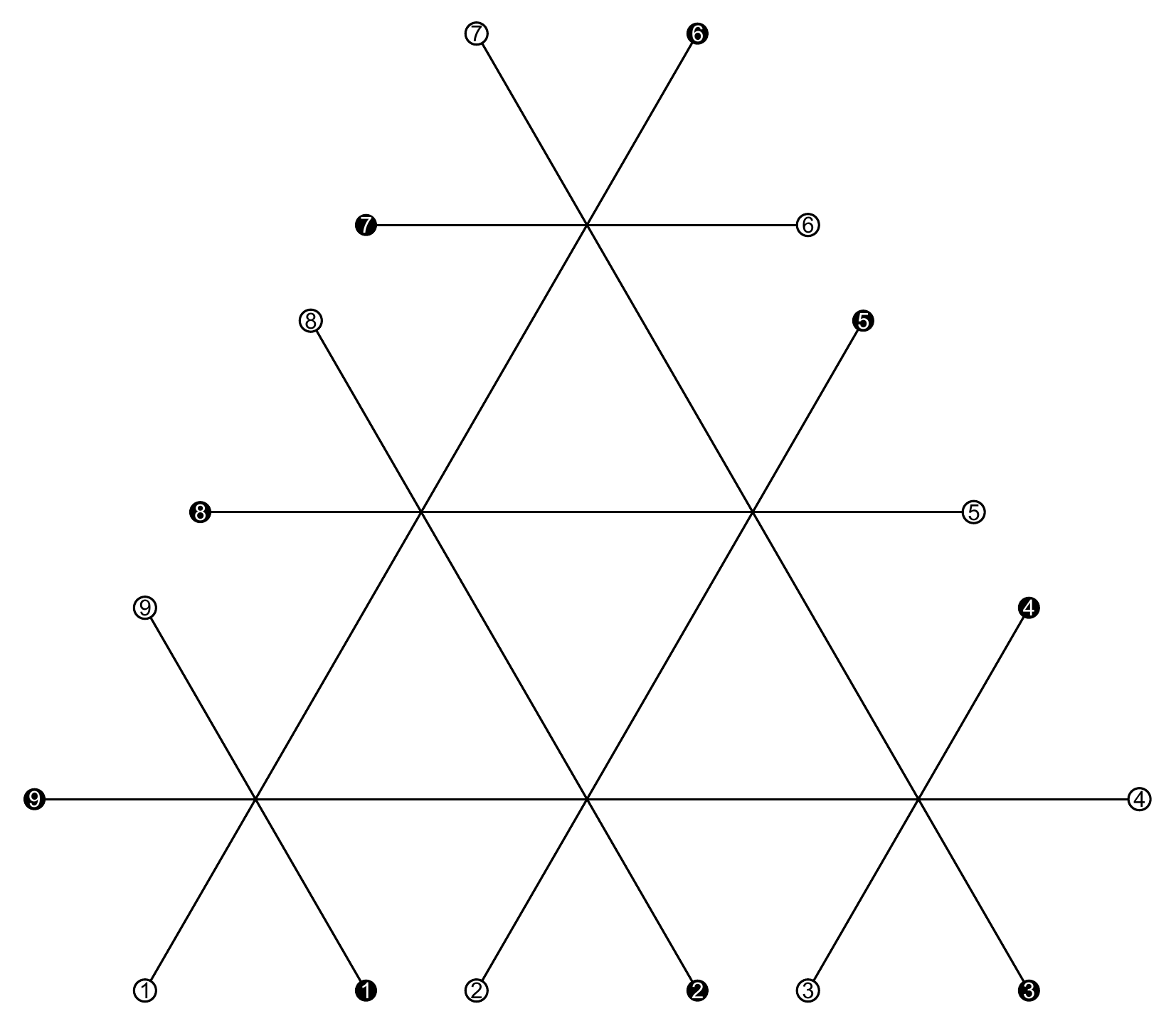}\end{center}
\caption{\label{cross3}Basic crossing diagram for the order-$3$ honeycomb web.}
\end{figure}

\end{proof}

For boundary colors $c\equiv 1$, the partition function is $Z(c)=Z(Z_{int})^2 = \Delta^3\det X$, which is the denominator in the expression
for the probability of $T_n$. 
We thus have, for the measure $\mu_c$,
\be\label{PrTn}Pr(T_n) = \frac{P_{T_n}\Tr_{T_n}(1)}{\det X}.\ee
Although we don't have an exact expression for $\Tr_{T_n}$, 
Baxter \cite{Baxter} computed the asymptotic growth rate of the trace of the honeycomb web to be
$$\lim_{n\to\infty}\frac1{n^2}\log Tr_{T_n}(1) = \frac12\log(\frac{3\Gamma(\frac13)^3}{4\pi^2}).$$

\subsection{The triangular honeycomb $T_n'$} \label{Tnprime}

For $T_n'$, all $3n$ nodes are white. They are grouped into subintervals $W^1=\{1,\dots,n\}$, $W^2=\{n+1,\dots,2n\}$ and
$W^3=\{2n+1,\dots,3n\}$. The matrix $X$ is $3n\times n$. Let $B$ denote the columns of $X$.
Let $\det X_{W^i}^B$ be the determinant of the maximal minor of $X$ corresponding to rows $W^i$.

\begin{thm}
We have 
$P_{T'_n} = \det X_{W^1}^B\det X_{W^2}^B\det X_{W^3}^B.$
\end{thm}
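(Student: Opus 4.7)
The plan is to mimic the proof of Theorem~\ref{PTn1}. By Theorem~\ref{main}, $P_{T'_n}=\eps_K\sum_{\tau\in\Pi}\P_{T'_n,\tau}X_\tau$, so I need to determine the support and signs of $\P_{T'_n,\cdot}$ and then match the resulting sum with the product of three determinants. Since $|\bB|=0$, every $\tau\in\Pi$ consists of $n$ triples only (no pairs), and the ambient matrix $X$ is rectangular of size $3n\times n$.

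First, I would show that $\P_{T'_n,\tau}=0$ unless $\tau$ is \emph{balanced} in the sense that each triple of $\tau$ meets each of $W^1,W^2,W^3$ in exactly one vertex. This is a combing/induction argument parallel to the one used in Theorem~\ref{PTn1}: if some triple has two vertices $w,w'$ on the same side $I^i$, draw the tripod with center $b$ in the disk and comb so that the region bounded by $bw$, $bw'$, and the arc of $I^i$ between $w$ and $w'$ is free of other crossings. Then induct on the distance between $w$ and $w'$ along $I^i$; applying the basic skein relation to the outermost crossings inside this region produces either a shorter instance of the same obstruction (handled by induction) or a persistent double-Y / adjacent same-side pair that cannot appear in the reduced honeycomb $T'_n$.

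Second, for a balanced $\tau$, I would show $\P_{T'_n,\tau}=\eps_0$, a global sign depending only on $n$. As in the $T_n$ proof, one first resolves all crossings between strands whose distal ends lie on the same side of the triangle: the double-Y resolution of such a crossing creates a same-side obstruction (as in Step~1), so only the parallel resolution contributes. This reduces $\tau$ to a canonical crossing diagram analogous to Figure~\ref{cross3}, in which every remaining crossing is between strands whose ends land on different sides; each of these must resolve to a double-Y in order to produce the hexagonal faces of $T'_n$, contributing a fixed sign determined only by the total number of such crossings (a function of $n$ alone).

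Finally, I would identify $\sum_{\tau\text{ balanced}}\P_{T'_n,\tau}X_\tau$ with $\det X_{W^1}^B\det X_{W^2}^B\det X_{W^3}^B$. Pairs $(\tau,\alpha)$, where $\tau$ is balanced and $\alpha$ is a bijection from the triples of $\tau$ to $B_{int}^*$, are in bijection with triples $(\sigma_1,\sigma_2,\sigma_3)$ of bijections $\sigma_i\colon W^i\to B_{int}^*$. Using the definition $X_\tau=\sum_\alpha \tilde X_\pi$ and the fact that the three copies $b^{(1)},b^{(2)},b^{(3)}$ in $\tilde X$ are identical columns (so reordering them is harmless up to a permutation sign), the resulting double sum expands termwise into the product of the three $n\times n$ determinants. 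The main obstacle is the sign bookkeeping: the signs from $\eps_0$, from $(-1)^\pi$ in $\tilde X_\pi$ (with the convention relating the cyclic order of the $b^{(i)}$ copies to the counterclockwise order of a triple's vertices around the boundary), and from $\sgn(\sigma_1)\sgn(\sigma_2)\sgn(\sigma_3)$ in the three determinant expansions must all combine consistently. This sign check is analogous in spirit to Lemma~\ref{signs} and to the $(-1)^{3n^2}$ count appearing in Theorem~\ref{PTn1}, and I expect a similar case analysis on cyclic orderings and crossing parities to complete the argument.
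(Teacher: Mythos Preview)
Your outline is correct and follows the paper's overall strategy: show $\P_{T'_n,\tau}=0$ unless each triple of $\tau$ meets every $W^i$, show $\P_{T'_n,\tau}$ is a constant sign on balanced $\tau$'s, and then match $\sum_\tau X_\tau$ with the product of three determinants via the bijection $(\tau,\alpha)\leftrightarrow(\sigma_1,\sigma_2,\sigma_3)$. Steps~1 and~3 are essentially identical to the paper's. The one organizational difference is in Step~2: the paper does \emph{not} argue via a single ``canonical crossing diagram'' for $T'_n$ analogous to Figure~\ref{cross3}, but instead proves that a balanced $\tau$ reduces uniquely to $T'_n$ by induction on $n$, adding one tripod at a time near a corner and checking that each new crossing resolves in the forced way. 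Your approach---resolve all same-side crossings to parallel, then resolve all remaining (different-side) crossings to double-$Y$---is a legitimate alternative, and indeed the paper's own sign computation (drawing $\tau$ as $\tau_0$ plus narrow bands of same-side crossings along each edge) is exactly your first move; but once you start resolving the different-side crossings of $\tau_0$ the web is no longer a collection of tripods, so the assertion ``each remaining crossing must resolve to a double-$Y$'' needs the same kind of inductive justification the paper supplies. Either route works; the paper's induction just makes that last step more explicit.
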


\begin{figure}[htbp]
\begin{center}\includegraphics[width=3in]{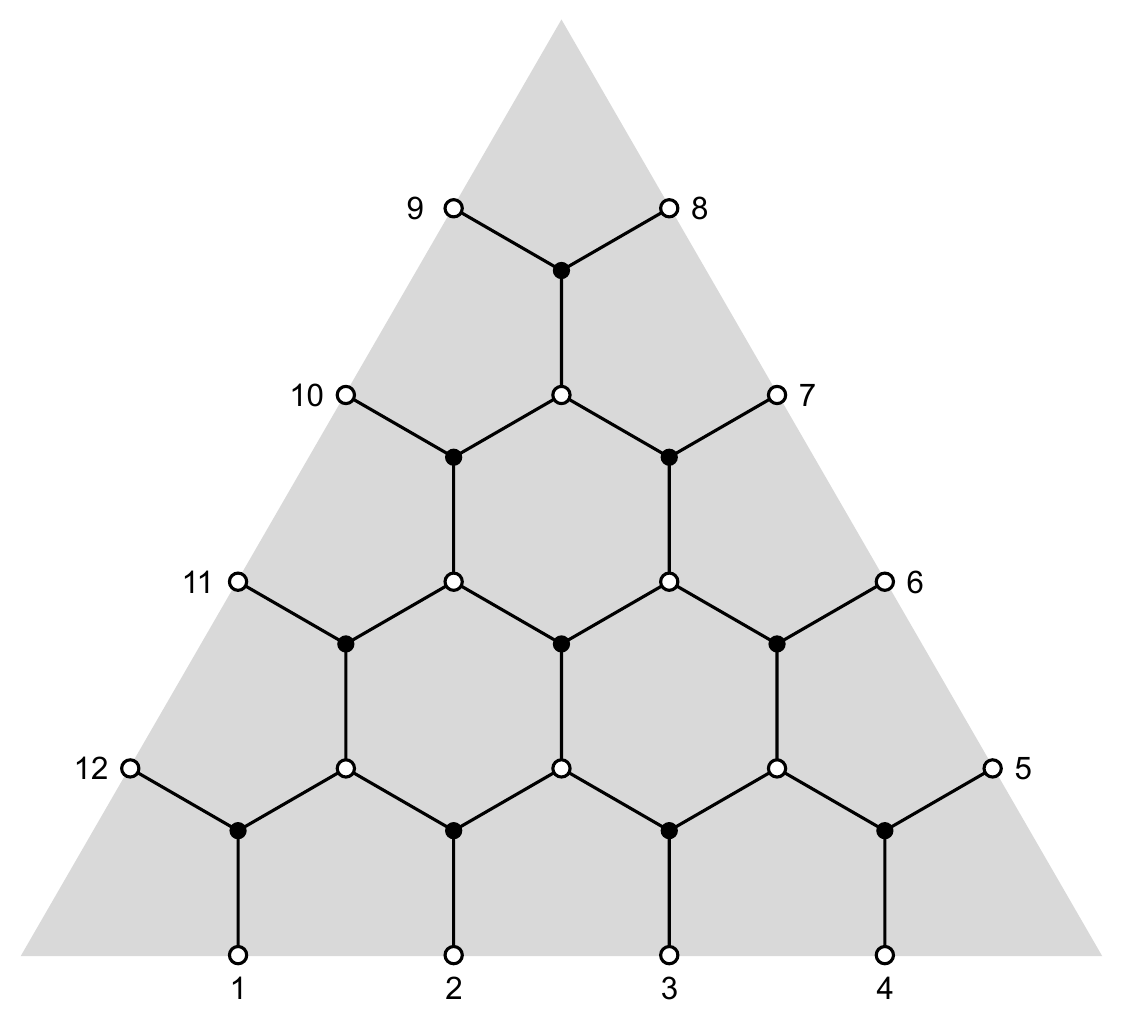}\end{center}
\caption{\label{bigweb4prime}Triangular honeycomb web $T'_n$ of order $4$.}
\end{figure}

\begin{proof}
Let $\lambda=T_n'$. 
By Theorem \ref{main}, we need to find $3$-partitions $\tau$ (partitions into parts of size $3$) of $\{1,\dots,3n\}$ 
which contribute to $\P_{\lambda,\tau}.$
We prove that $\tau$ contributes if and only if each triple of $\tau$ contains one element of each of $W^1, W^2, W^3$.

First suppose a part of $\tau$ contains two nodes $w,w'$ of the same subinterval, say $W^1$. We then claim that $\P_{\lambda,\tau}=0$.
We apply skein relations to write $\tau$ as a formal linear combination of planar reduced webs. We argue by
induction on the number of nodes separating $w$ and $w'$. If $w,w'$ are adjacent, and connected to internal vertex $b$,
then in any planar resolution $w, b,w'$ will be connected in the same way $w\sim b\sim w'$, that is, no skein relation changes these connections.
Thus $\P_{\lambda,\tau}=0$. If $w,w'$ are not adjacent, first ``comb'' the crossings so that there are no crossing of strands
between $w,b,w'$ and the boundary, as in Figure \ref{combed3}. 
\begin{figure}[htbp]
\begin{center}\includegraphics[width=1.5in]{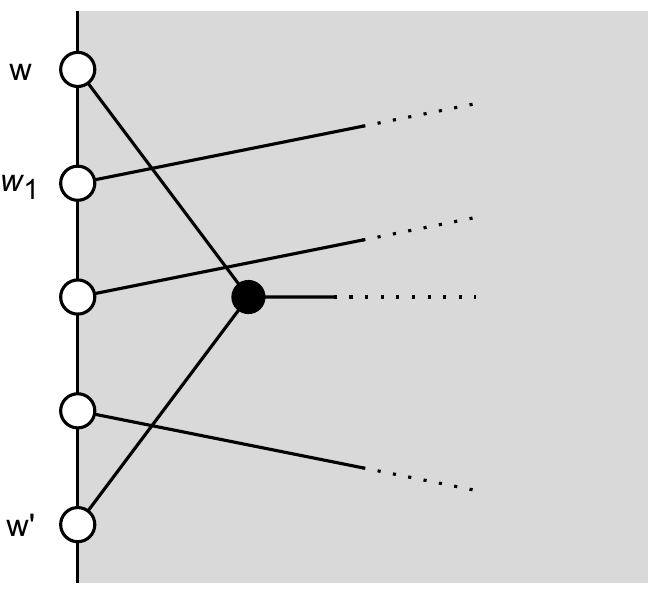}\end{center}
\caption{\label{combed3} Combing of $\tau$ between $w$ and $w'$.}
\end{figure}
Let $w_1$ be the first node adjacent to $w$, between $w$ and $w'$. Now either resolution of the crossing of the strand starting at $w_1$ and the path $wbw'$ results in a web with two connected 
nodes closer then $w,w'$ are. The completes the proof that $\P_{\lambda,\tau}\ne0$ only if each part of 
$\tau$ contains nodes from each of $W^1,W^2,W^3$. 

Now suppose each part of $\tau$ contains a point in each of $W^1,W^2,W^3$.
We show that $\P_{\lambda,\tau}=\pm1$ with the sign given by the above determinantal formula.
We show this by induction on $n$, the number of parts of $\tau$. If $n=1$, the conclusion holds.
Suppose we have constructed a copy of $T_{n-1}'$ from the first $n-1$ parts of $\tau$.
We embed the last part $ijk$ of $\tau$ so that its central vertex $b$ is near the lower left corner of the triangle.
This means the strand from $i$ to $b$ crosses $i-1$ vertical strands on the lower level of $T_{n-1}'$,
the strand from $j$ crosses $j-n-1$ strands on side $2$ and $n-1$ strands on side $1$, and the strand from $k$ crosses
$3n-k$ strands on side $3$.  See Figure \ref{addone}. 
\begin{figure}[htbp]
\begin{center}\includegraphics[width=3.5in]{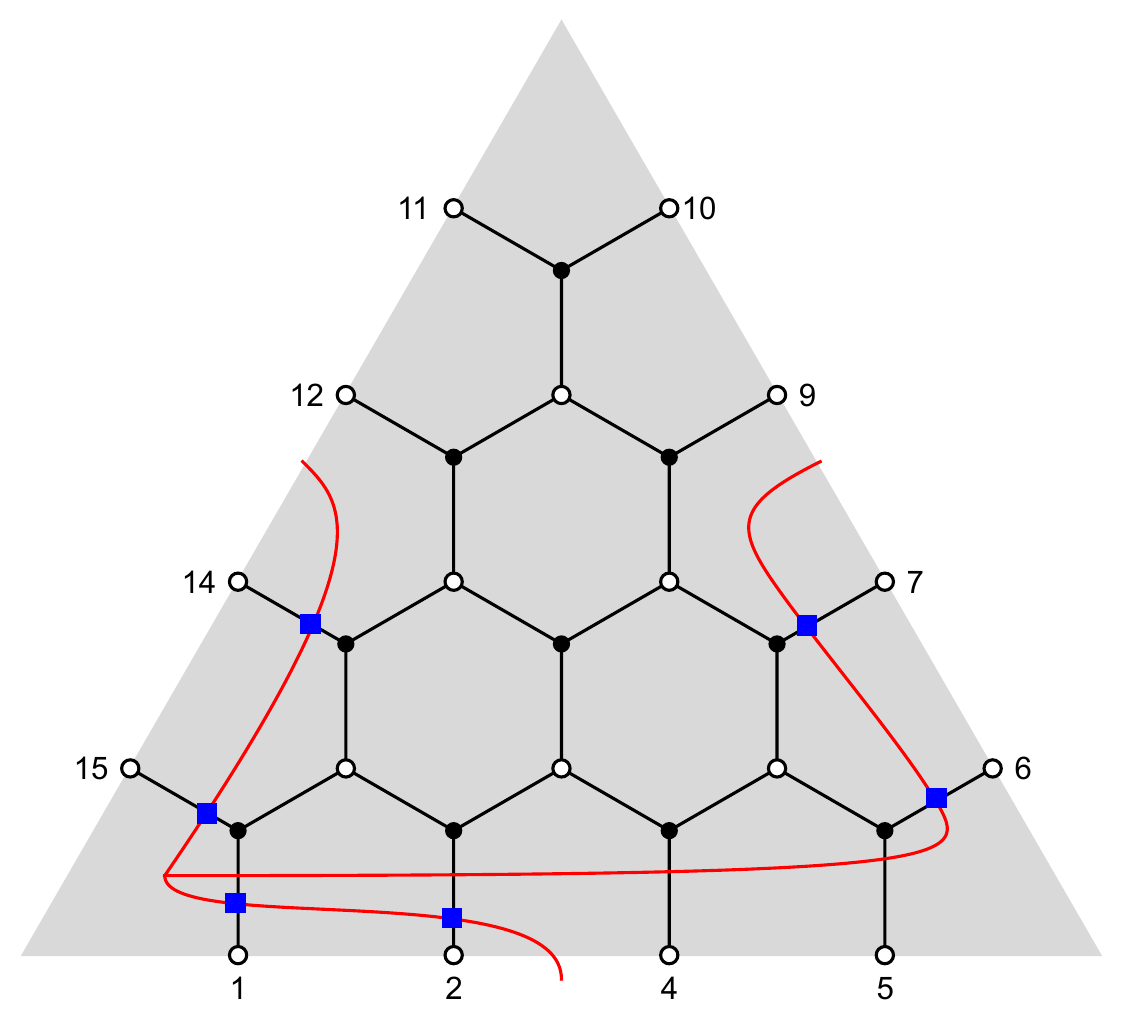}\end{center}
\caption{\label{addone}Induction step.}
\end{figure}
These crossings all necessarily resolve as parallel crossings except for the $n$ crossings
of strand $j$ along side $1$, which result in $n-2$ more hexagonal faces and thus creates $T_n'$. 

To compute the sign contribution, take a $3$-partition $\tau$ and bijection $\sigma$ from the parts of $\tau$ to $B_{int}^*$.
Compare $\tau$ with the partition $\tau_0$ into parts $(i,n+i,2n+i)$
with $i$ from $1$ to $n-1$, and bijection sending $(i,n+i,2n+i)$ to $b_i$, the $i$th element of $B_{int}^*$. 
We can order the parts of $\tau$ according to $\sigma$. Then $\tau$ is defined by $3$ permutations $\sigma_1,\sigma_2,\sigma_3$
where $\sigma_i$ orders the parts in $W^i$. Now $\tau$ can be drawn in the triangle so that it agrees with the drawing of 
$\tau_0$ except for
a narrow band along each edge where the strands cross according to the permutation $\sigma_i$. We can reduce $\tau$ 
by first uncrossing all the strands in the bands to make a copy of $\tau_0$, then reducing $\tau_0$. 
Thus the sign from the contribution of $\tau$ equals $(-1)^{\sigma_1+\sigma_2+\sigma_3}$ times the sign of the contribution
of $\tau_0$. 

This is the same sign as that given by the product of determinants in the statement. 
\end{proof}

\section{Scaling limit}\label{scalinglimitscn}

Let $\H$ denote the upper half plane and suppose $\G_\eps=\eps\Z^2\cap \H,$
the part of $\eps\Z^2$ in the closed upper half plane. Let $z_1,z_2\in \R$ be distinct points
on the boundary of $\H$. In $\G_\eps$ let $w_\eps,b_\eps$ be, respectively, a white and black vertex
on the lower boundary of $\G_\eps$,
and converging to $z_1,z_2$ as $\eps\to0$. 
We have \be\label{Xsclim}X_{w_\eps,b_\eps} = \frac{2\eps}{\pi(z_2-z_1)} + O(\eps^2),\ee
see \cite{Kenyon.ci}.

Let $w_1<b_1<w_2<b_2$ be four points in $\R$. Let $w_1^\eps,b_1^\eps,w_{2}^\eps,b_{2}^\eps$ be white and black vertices of $\G_{\eps}$ converging to the $w_i,b_i$ as above. Then with $c\equiv 1$ a random reduced web will connect $w_1$ with $b_1$ and $w_2$ with $b_2$ with
probability (see (\ref{4ptprobs}) above)
$$\Pr(\lambda_1) = \frac{X_{1,1}X_{2,2}}{X_{1,1}X_{2,2}-X_{1,2}X_{2,1}}~~ \stackrel{\eps\to0}{\longrightarrow}~~ \frac{(b_2-w_1)(w_2-b_1)}{(b_2-b_1)(w_2-w_1)}.$$
Note that this limiting quantity is M\"obius invariant; it only depends on the cross ratio of the four boundary points. More generally
this quantity is conformally invariant, if we replace the upper half plane with a Jordan domain with four marked boundary points, approximated by $\eps\Z^2$ as $\eps\to0$ in an appropriate sense \cite{Kenyon.ci}.

Let $w_1<b_1<\dots<w_{3n}<b_{3n}$ be $6n$ points on the boundary of $\H$. Let $w_1^\eps,b_1^\eps,\dots,w_{3n}^\eps,b_{3n}^\eps$ be white and black vertices of $\G_{\eps}$ converging to the $w_i,b_i$ as above.
For boundary colors $c\equiv 1$ the probability of the honeycomb web $T_n$ of Figure \ref{bigweb3},
given by expression (\ref{PrTn}), takes a nice form in the $\eps\to0$ limit since the determinants are of Cauchy matrices. 
Letting $I_1=\{1,\dots,n\},~~I_2=\{n+1,\dots,2n\}$ and $I_3=\{2n+1,\dots,3n\}$, 

\be\label{PrTn2}Pr(T_n) \to \frac{\prod_{\text{$(i,j)$ not in $(I_1,I_2),(I_2,I_3)$ or $(I_3,I_1)$}}(w_i-b_j)}{\prod_{\text{$i, j$ in diff't parts}}(w_i-w_j)(b_i-b_j)}\Tr_{T_n}(1).\ee
See for example (\ref{beetle}).
This case is special in that the numerators and denominators factor into linear parts; this factorization does not hold for more general
boundary conditions.

\old{
\section{Appendix: Reduction}\label{bijectionscn}

In this section we provide an explicit measure-preserving map $\Psi$ from colored $3$-multiwebs ($3$-multiwebs with
edges colored $\{1,2,3\}$ and with all colors appearing at each vertex) to reduced multiwebs.
We work in the setting of a bipartite graph on a multiply connected planar domain, with no boundary vertices.
The map $\Psi$ depends on an a priori choice of order of the faces of $\G$. 

Let $\Omega_1$ be the set of dimer covers of $\G$ and $\Omega_3$ the set of triple dimer covers of $\G$.
By \cite{DKS}, we have
$$\det \tilde K(I) = \sum_{(m_1,m_2,m_3)\in\Omega_1^3} 1 = \sum_{m\in\Omega_3} Tr(m) = \sum_{\lambda\in\Lambda^3}C_\lambda Tr(\lambda)$$
where $\Lambda_3$ is the set of reduced web classes. 

Let $(m_1,m_2,m_3)$ be a $3$-tuple of dimer covers, an element of $(\Omega_1)^3.$
Their union is a coloring of a $3$-multiweb $m$. We need to construct a reduced multiweb $\lambda=\Psi(m_1,m_2,m_3)$
so that each reduced multiweb has the correct number of preimages.
If all faces of $\G$ are punctured, then $m$ is reduced, and the mapping is just to forget the colors. This mapping
assigns to each multiweb $m$ exactly $\Tr(m)$ different colored multiwebs, so we are done.

Now starting from the case where all faces are punctured, we fill in these punctures one at a time, in a prescribed order,
and define the measure preserving mapping at each step. 

Consider each component of $m$ separately.

\begin{enumerate}
\item Suppose when we fill in a puncture we create a contractible loop in $m$. That is, there was a loop $\gamma$ in $m$ surrounding (only) this puncture,
and now $\gamma$ is contractible, and therefore not reduced, in the filled-in surface. To reduce $m$, take the single-multiplicity edges of $\gamma$, slide them one notch around $\gamma$, leaving a set of tripled edges. Note that there are exactly three possible colorings of the loop, and the weight of the initial web is $3$ times the weight 
of the reduced web, so the measure is preserved. A similar calculation works when there are multiple (disjoint) 
contractible loops created when filling in a puncture 

\item Suppose when we fill in a puncture we create a contractible bigon. This bigon has two trivalent vertices $w,b$, connected by two paths $\gamma_1,\gamma_2$
so that $\gamma=\gamma_1\gamma_2^{-1}$ surrounds the filled-in puncture. 
The first and last edge of $\gamma_1$ are of the same color $c_1$, and similarly for $\gamma_2$ with color $c_2\ne c_1$.
If $c_2>c_1$, rotate all edges of color $c_2$ by one notch around the loop $\gamma$. This creates a new colored web $m'$
in which the bigon has become a single path: $\gamma_2$ becomes a set of tripled edges while $\gamma_1$ remains part of the multiweb
connecting $w$ and $b$.
If $c_2<c_1$ do the reverse: move edges of color $c_1$ by one notch around the loop $\gamma$, creating a third web $m''$.

Note that this reduction preserves measure: exactly half of $m$'s colorings are converted to each of $m',m''$, both of which represent the same web class.
Moreover $m',m''$ have all colorings with equal probability.

\item Suppose when we fill in a puncture we create a contractible face of degree $4$ in the abstract web associated to $m$.
That is, surrounding the puncture there is a quadrilateral face with trivalent vertices $w_1,b_2,w_3,b_4$, with $12$ paths $\gamma_{12},\gamma_{23},\gamma_{34},\gamma_{41}$ connecting them.
For each $i$ let $c_i$ be the beginning (and ending) colors of $\gamma_{i,i+1}$. Either $c_1=c_3$ or $c_2=c_4$ (or both).
Suppose, after relabeling if necessary, $c_1=c_3$, and either $c_2\ne c_4$, or $c_2=c_4$ is smaller than $c_1$. 
Then we rotate color $c_1$ one notch around the cycle, creating a new web $m'$ which is a reduction of $m$. 
Since all colorings are equally likely, if $c_1=c_3$ and $c_2=c_4$ then both reductions are equally likely,
and so the measure is again preserved. If $c_2\ne c_4$, then one of the reductions has weight zero, since no coloring
is possible with those colors, so again we preserve the measure.

\end{enumerate}
 }

\section{Appendix: $n=6$ cases}

When $n=6$, with three white and three black nodes there are, up to symmetry, three possible type patterns. 
For each pattern, $\Lambda$ consists in six classes.
The other case with $n=6$ consists of all nodes of the same type.

\subsection{Type pattern $(w,b,w,b,w,b)$}\label{wbwbwb}

When $p=(w,b,w,b,w,b)$, the six reduced web classes are shown in Figure \ref{Rt61}.
\begin{figure}[htbp]
\begin{center}\includegraphics[width=6.5in]{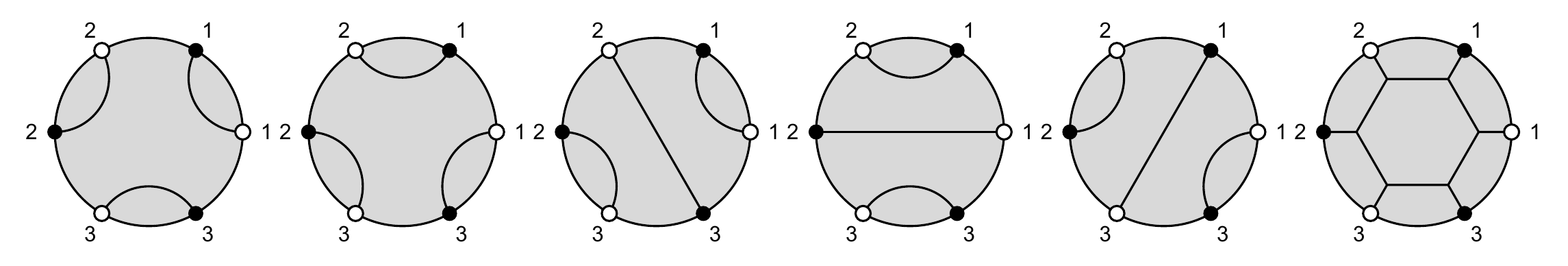}\end{center}
\caption{\label{Rt61}Classes of reduced webs with $6$ boundary points and $p=(w,b,w,b,w,b)$.}
\end{figure}

The matrix $\P$ is (rows are indexed by the webs in the figure; column labels correspond to the permutation
from white to black, in line notation, defined by pairing $\tau$.

\vspace{10pt}
\centerline{
\begin{tabular}{r@{$\,\,\,$}|c@{$\,\,\,$}c@{$\,\,\,$}c@{$\,\,\,$}c@{$\,\,\,$}c@{$\,\,\,$}c@{$\,\,\,$}}
       &123 & 132 & 213  & 231 & 312 & 321 \\
        \hline
$1$  &$1$&$0$&$0$&$0$&$0$&$0$ \\
$2$   &$0$&$0$&$0$&$0$&$1$&$0$\\
$3$   &$0$&$1$&$0$&$1$&$0$&$0$\\
$4$   &$0$&$0$&$1$&$1$&$0$&$0$\\
$5$   &$0$&$0$&$0$&$1$&$0$&$1$\\
$6$   &$0$&$0$&$0$&$-1$&$0$&$0$\\
\end{tabular}}
\vspace{7pt}

The corresponding polynomials are (recall that the coefficients in the table need to be multiplied by the signature of $\tau$)
$$\begin{pmatrix}P_1\\\vdots\\P_6\end{pmatrix} =\begin{pmatrix}
X_{1,1}X_{2,2}X_{3,3}\\
X_{1,3}X_{2,1}X_{3,2}\\
-X_{1,1}X_{2,3}X_{3,2}+X_{1,2}X_{2,3}X_{3,1}\\
-X_{1,2}X_{2,1}X_{3,3}+X_{1,2}X_{2,3}X_{3,1}\\
-X_{1,3}X_{2,2}X_{3,1}+X_{1,2}X_{2,3}X_{3,1}\\
-X_{1,2}X_{2,3}X_{3,1}\end{pmatrix}.$$

If $c\equiv 1$, note that (after multiplying the last one by $2$, its trace) their sum is $\det X = \frac{Z(c)}{\Delta^3}$.

In the scaling limit with (\ref{Xsclim}), and $w_1,b_1,w_2,b_2,w_3,b_3\to z_1,z_2,z_3,z_4,z_5,z_6$ we have limiting probabilities
\be\label{6probs}
\begin{pmatrix}Pr_1\\\vdots\\Pr_6\end{pmatrix}=\begin{pmatrix}
\frac{(z_3-z_2) (z_4-z_1)(z_5-z_2) (z_5-z_4)(z_6-z_1)(z_6-z_3)}{(z_3-z_1)(z_4-z_2) (z_5-z_3) (z_6-z_4)(z_5-z_1)(z_6-z_2)}\\
\frac{(z_2-z_1)(z_4-z_1) (z_4-z_3)(z_5-z_2) (z_6-z_3)(z_6-z_5)}{(z_3-z_1)(z_4-z_2) (z_5-z_3) (z_6-z_4)(z_5-z_1)(z_6-z_2)}\\   
\frac{(z_3-z_2)(z_4-z_3) (z_6-z_1)(z_6-z_5)}{(z_3-z_1)(z_5-z_3) (z_6-z_4)(z_6-z_2)}\\
\frac{(z_2-z_1)(z_4-z_3) (z_5-z_4)(z_6-z_1)}{(z_3-z_1)(z_4-z_2) (z_6-z_4)(z_5-z_1)}\\
\frac{(z_2-z_1)(z_3-z_2) (z_5-z_4)(z_6-z_5)}{(z_4-z_2)(z_5-z_3)(z_6-z_2)(z_5-z_1) }\\
\frac{2 (z_2-z_1)(z_3-z_2) (z_4-z_3)(z_5-z_4)(z_6-z_5) (z_6-z_1)}{(z_3-z_1)(z_4-z_2) (z_5-z_3) (z_6-z_4)(z_5-z_1)(z_6-z_2)}
\end{pmatrix}
\ee

This case is special in that the numerators and denominators factor into linear parts; this factorization does not hold for more general
boundary conditions.

\subsection{Type pattern $(b,b,w,b,w,w)$}

When $p=(b,b,w,b,w,w)$,
the six reduced web classes are shown in Figure \ref{Rt62}.
\begin{figure}[htbp]
\begin{center}\includegraphics[width=6.5in]{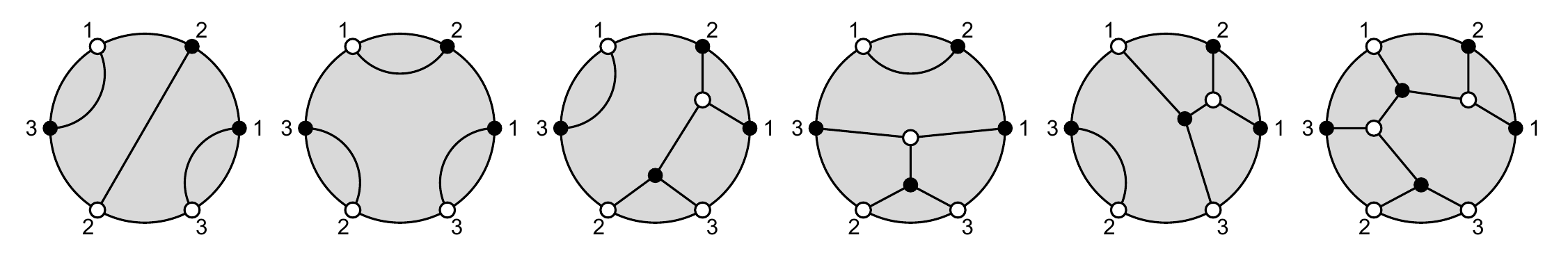}\end{center}
\caption{\label{Rt62}Classes of reduced webs with $6$ boundary points and $p=(b,b,w,b,w,w)$.}
\end{figure}

The matrix $\P$ is

\vspace{10pt}
\centerline{
\begin{tabular}{r@{$\,\,\,$}|c@{$\,\,\,$}c@{$\,\,\,$}c@{$\,\,\,$}c@{$\,\,\,$}c@{$\,\,\,$}c@{$\,\,\,$}}
       &123 & 132 & 213  & 231 & 312 & 321 \\
        \hline
$1$  &$0$&$0$&$0$&$0$&$1$&$1$ \\
$2$   &$1$&$1$&$1$&$1$&$0$&$0$\\
$3$   &$0$&$0$&$0$&$0$&$-1$&$0$\\
$4$   &$-1$&$0$&$-1$&$0$&$0$&$0$\\
$5$   &$-1$&$-1$&$0$&$0$&$0$&$0$\\
$6$   &$1$&$0$&$0$&$0$&$0$&$0$\\
\end{tabular}}
\vspace{7pt}

Consequently the polynomials are 
$$\begin{pmatrix}P_1\\\vdots\\P_6\end{pmatrix} =\begin{pmatrix}X_{1,3}X_{2,1}X_{3,2}-X_{1,3}X_{2,2}X_{3,1}\\
X_{1,1}X_{2,2}X_{3,3}-X_{1,1}X_{2,3}X_{3,2}-X_{1,2}X_{2,1}X_{3,3}+X_{1,2}X_{2,3}X_{3,1}\\
-X_{1,3}X_{2,1}X_{3,2}\\
-X_{1,1}X_{2,2}X_{3,3}+X_{1,2}X_{2,1}X_{3,3}\\
-X_{1,1}X_{2,2}X_{3,3}+X_{1,1}X_{2,3}X_{3,2}\\
X_{1,1}X_{2,2}X_{3,3}
\end{pmatrix}.$$

\subsection{Type pattern $(b,b,b,w,w,w)$}

This case is
shown in Figure \ref{Rt63}.
\begin{figure}[htbp]
\begin{center}\includegraphics[width=6.5in]{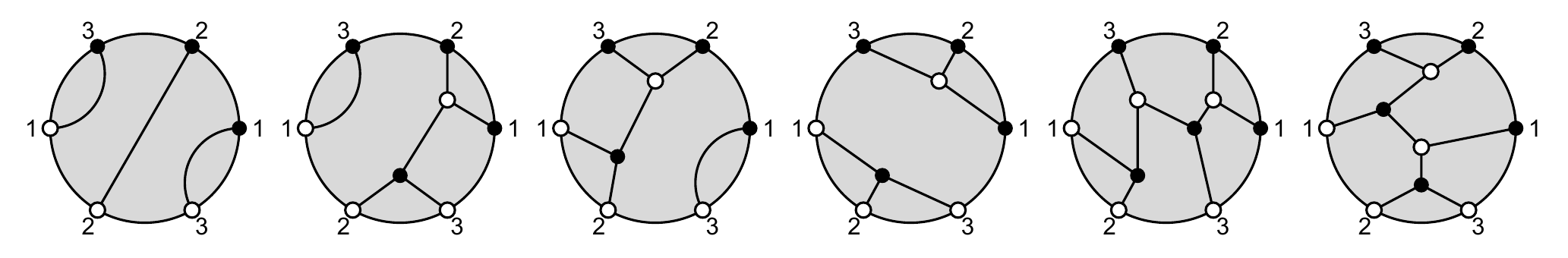}\end{center}
\caption{\label{Rt63}Classes of reduced webs with $6$ boundary points and $p=(b,b,b,w,w,w)$.}
\end{figure}
The matrix $\P$ is

\vspace{10pt}
\centerline{
\begin{tabular}{r@{$\,\,\,$}|c@{$\,\,\,$}c@{$\,\,\,$}c@{$\,\,\,$}c@{$\,\,\,$}c@{$\,\,\,$}c@{$\,\,\,$}}
        &123 & 132 & 213  & 231 & 312 & 321 \\
        \hline
$1$  &$1$&$1$&$1$&$1$&$1$&$1$ \\
$2$   &$-1$&$-1$&$-1$&$0$&$-1$&$0$\\
$3$   &$-1$&$-1$&$-1$&$-1$&$0$&$0$\\
$4$   &$-1$&$0$&$0$&$0$&$0$&$0$\\
$5$   &$1$&$1$&$0$&$0$&$0$&$0$\\
$6$   &$1$&$0$&$1$&$0$&$0$&$0$\\
\end{tabular}}
\vspace{7pt}

\subsection{Type pattern $(w,w,w,w,w,w)$}

In this case there are five classes, see Figure \ref{Rt64}.

\begin{figure}[htbp]
\begin{center}\includegraphics[width=5in]{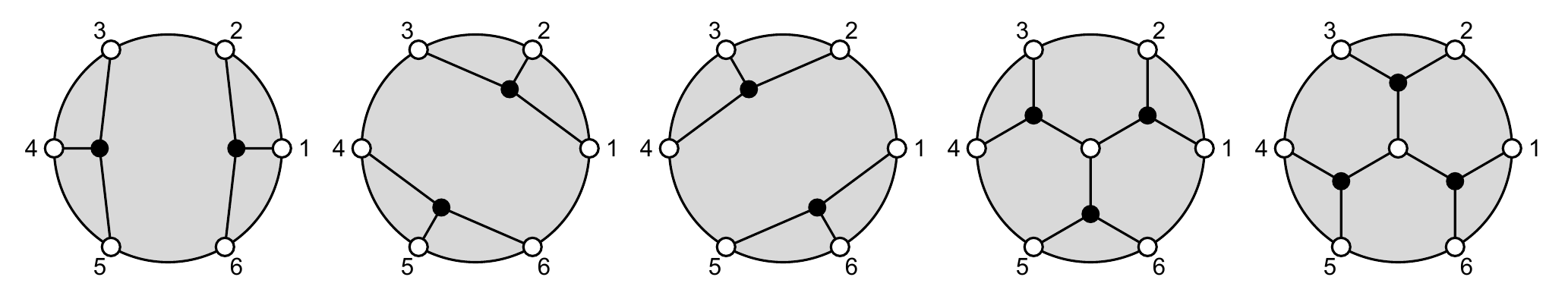}\end{center}
\caption{\label{Rt64}Classes of reduced webs with $6$ boundary points and $p=(w,w,w,w,w,w)$.}
\end{figure}
The matrix $\P$ is (with columns indexing the 3-partitions, enumerated by the part of the partition containing $1$)

\vspace{10pt}
\centerline{
\begin{tabular}{r@{$\,\,\,$}|c@{$\,\,\,$}c@{$\,\,\,$}c@{$\,\,\,$}c@{$\,\,\,$}c@{$\,\,\,$}c@{$\,\,\,$}c@{$\,\,\,$}c@{$\,\,\,$}c@{$\,\,\,$}c@{$\,\,\,$}}
        &123 & 124 & 125  & 126 & 134 & 135 & 136 & 145 & 146 & 156 \\
        \hline
$1$  &$0$&$0$&$1$&$1$&$0$&$1$&$1$&$0$&$0$&$0$ \\
$2$   &$1$&$1$&$0$&$0$&$0$&$1$&$0$&$1$&$0$&$0$\\
$3$   &$0$&$0$&$0$&$0$&$1$&$1$&$0$&$0$&$1$&$1$\\
$4$   &$0$&$0$&$0$&$0$&$0$&$-1$&$-1$&$-1$&$-1$&$0$\\
$5$   &$0$&$-1$&$-1$&$0$&$-1$&$-1$&$0$&$0$&$0$&$0$\\
\end{tabular}}
\vspace{7pt}

\bibliographystyle{alpha}
\bibliography{references.bib}

\end{document}